\newtheorem{lem}{Lemma}[section]
\newtheorem{prop}[lem]{Proposition}
\newtheorem{thm}[lem]{Theorem}
\newtheorem{cor}[lem]{Corollary}
\newtheorem{sublem}[lem]{Sublemma}
\newtheorem{defn}{Definition}[section]
\newtheorem{rem}[lem]{Remark}
\begin{document}
\title{Induced map on $K$ theory
for certain $\Gamma-$equivariant maps between Hilbert spaces}
\author{  Tsuyoshi Kato}

\keywords
{$K$ theory, degree, 
 $\infty$ dimensional Bott periodicity}

\footnote
{ {\it Mathematics Subject Classification:}
    Primary 46L80, Secondary  46L85.}
    
\Large

\date{}

\begin{abstract}
Higson-Kapsparov-Trout 
introduced an infinite-dimensional Clifford algebra of a Hilbert space,
and verified Bott periodicity on $K$ theory.
To develop algebraic topology of maps between Hilbert spaces,
in this paper we introduce an induced Hilbert Clifford algebra,
and 
 construct an induced map between $K$-theory of the Higson-Kasparov-Trout Clifford algebra 
 and the induced Clifford algebra.  We also compute its $K$-group for some concrete case.
\end{abstract}

\maketitle

\section{Introduction}
Let $\Gamma $ be  a discrete group,
and 
$H, H'$ be two Hilbert  spaces on which $\Gamma$ acts linearly and  isometrically.
Let 
$F = l+c: H' \to H$ be a $\Gamma$-equivariant   map whose linear part is $l$,
which is also $\Gamma$-equivariant.
We want to construct is an ``induced map''
of $K$-theory of these  infinite-dimensional  spaces.
Of course we cannot obtain such a map in  the usual sense because
 these spaces are   locally non compact.
Thus, we introduce the infinite-dimensional
Clifford $C^*$-algebras  by Higson, Kasparov and Trout
 \cite{hkt}.

Let $E$ be a finite-dimensional Euclidean space, and let $Cl(E)$ be 
the complex Clifford algebra.
There is a $*$-homomorphism  
$\beta: C_0(\mathbb R) \to C_0(\mathbb R) \hat{\otimes} C_0(E, Cl(E))$
called the  Bott map, given by the functional calculus
$$f \to f(X \hat{\otimes} 1+ 1 \hat{\otimes } C)$$
where 
$X$ is an unbounded multiplier  of $C_0(\mathbb R)$
by $X(f)(x) = xf(x)$,
and $C$,
which is called the 
Clifford operator,
 is also  an unbounded multiplier  of $C_0(E,Cl(E))$ by
$C(v) =v$.
It turns out  that $\beta$ induces an isomorphism on $K$ theory as follows:
$$\beta_* : K_*(C_0(\mathbb R) )\cong
K_*( C_0(\mathbb R) \hat{\otimes} C_0(E,Cl(E))).$$
HKT generalized its construction
to obtain  the Clifford algebra
$S{\frak C}(H)$ for an   infinite-dimensional 
Hilbert space $H$, and verified the isomorphism
$$\beta_* : K_*(C_0(\mathbb R) )\cong
K_*( S{\frak C}(H)).$$
The idea is to use finite-dimensional approximation of the Hilbert space and  inductively
 apply the Bott map.

To develop algebraic topology of maps between Hilbert spaces,
our  first step is to construct an induced map in  $K$-theory.
Let $F = l+c : E' \to E$ be a proper map
such that $l: E \cong E'$ gives a linear isomorphism,
where $l$ is its linear part and $c$ is the non linear part
between finite-dimensional Euclidean spaces.
Then $F$ induces a map
$$F^* : C_0(E, Cl(E)) \to C_0(E',Cl(E'))$$
given by 
$$u \mapsto \Bigl ( v' \mapsto \bar{l}^* \bigl(u(F(v')) \bigr) \Bigr)$$
where $\bar{l}$ is the unitary of its polar decomposition.
Notice that 
the image $F^*(C_0(E, Cl(E)) ) \subset  C_0(E',Cl(E'))$
is a $C^*$ subalgebra.

It  becomes clear  why we  use $\bar{l}$
rather than $l$ to construct the infinite-dimensional version 
of this map. 
Let $F= l +c : H' \to H$ be a map between two Hilbert spaces.
To extend the above “pull-back” construction to the infinite-dimensional setting we have to impose extra conditions on $F$.
 We call such special maps finitely approximable. 
 See Definition \ref{fin-appro} in Section $3$ for more   detail. We then obtain the following result.

\begin{prop}
Suppose $F: H' \to H$ is finitely approximable.
Then there is an induced Clifford $C^*$ algebra
$S{\frak C}_F(H')$.
\end{prop}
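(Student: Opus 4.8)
The plan is to imitate the Higson--Kasparov--Trout description of $S\mathfrak{C}(H)$ as an inductive limit of finite-dimensional Clifford algebras, but with each finite stage replaced by the $C^*$-subalgebra cut out by pulling back along the finite-dimensional pieces of $F$.

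First I would fix, as in \cite{hkt}, increasing sequences of finite-dimensional subspaces $E_1' \subset E_2' \subset \cdots \subset H'$ and $E_1 \subset E_2 \subset \cdots \subset H$ with dense union, so that $S\mathfrak{C}(H') \cong \varinjlim_n S\mathcal{C}(E_n')$ and $S\mathfrak{C}(H) \cong \varinjlim_n S\mathcal{C}(E_n)$, where $S\mathcal{C}(E) := C_0(\mathbb{R}) \hat{\otimes} C_0(E, Cl(E))$ and the connecting maps are the Bott maps $\beta$ built from the Clifford operators of the orthogonal complements $E_{n+1}' \ominus E_n'$ and $E_{n+1} \ominus E_n$. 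The definition of finite approximability (Definition \ref{fin-appro}) is designed precisely so that, after passing to a cofinal subsequence, $F$ restricts to a compatible family of proper maps $F_n = l_n + c_n : E_n' \to E_n$ whose linear parts $l_n$ are linear isomorphisms; here ``compatible'' means that the maps $F_n$, the unitaries $\bar{l}_n$ of their polar decompositions, and the nonlinear parts $c_n$ all behave well with respect to the two chains of orthogonal decompositions.

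Second, each $F_n$ induces a $*$-homomorphism $F_n^* : C_0(E_n, Cl(E_n)) \to C_0(E_n', Cl(E_n'))$ as recalled above, hence a $*$-homomorphism $\Phi_n := \mathrm{id} \hat{\otimes}\, F_n^* : S\mathcal{C}(E_n) \to S\mathcal{C}(E_n')$ whose image $A_n$ is a $C^*$-subalgebra of $S\mathcal{C}(E_n')$. The key point to check is the compatibility identity $\beta' \circ \Phi_n = \Phi_{n+1} \circ \beta$ relating the Bott connecting map $\beta$ of $S\mathfrak{C}(H)$ and the Bott connecting map $\beta'$ of $S\mathfrak{C}(H')$; this is exactly what the finite-approximability hypotheses are tailored to guarantee, and it forces $\beta'(A_n) \subseteq A_{n+1}$, so that the connecting maps $\beta'$ of $S\mathfrak{C}(H')$ restrict to $*$-homomorphisms $A_n \to A_{n+1}$.

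Finally I would set $S\mathfrak{C}_F(H') := \varinjlim_n A_n$ with these restricted connecting maps; being an inductive limit of $C^*$-algebras it is a $C^*$-algebra, it is realized (since the Bott maps are injective) as the norm closure of $\bigcup_n A_n$ inside $S\mathfrak{C}(H')$, it is independent of the chosen exhausting sequences up to canonical isomorphism, and the maps $\Phi_n$ assemble in the limit into a canonical $*$-homomorphism $S\mathfrak{C}(H) \to S\mathfrak{C}_F(H')$. I expect the main obstacle to be the compatibility identity of the second step: one must track how the Clifford operator on the added direction $E_{n+1} \ominus E_n$ pulls back under $\bar{l}_{n+1}$ and $F_{n+1}$ and compare it with the Clifford operator on $E_{n+1}' \ominus E_n'$, controlling the nonlinear contribution of $c_{n+1}$ — and it is precisely in order to make this comparison go through that $F$ is assumed finitely approximable rather than merely proper.
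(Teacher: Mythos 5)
The central step of your argument — the compatibility identity $\beta' \circ \Phi_n = \Phi_{n+1} \circ \beta$, which you then use to form a standard inductive limit of $C^*$-algebras — does not hold, and the paper explicitly points this out. In Section 5, just before the degree discussion, the author considers exactly this diagram
\[
\begin{CD}
  S{\frak C}(W_i)  @>F^*_i>>  S {\frak C}(W_i')  \\
  @V\beta VV  @VV\beta V \\
  S{\frak C}(W_{i+1})  @>F_{i+1}^*>> S{\frak C}(W_{i+1}')
\end{CD}
\]
and observes that it is ``far from commutative'': the nonlinear part $c$ of $F$ sends the newly added directions $(W_i')^{\perp} \cap W'_{i+1}$ into $H$ in a way that is not controlled by the Bott map, so $F_{i+1}^*\circ\beta$ and $\beta\circ F_i^*$ genuinely disagree. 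Even in the strongly finitely approximable case the paper only obtains exact commutativity (Sublemma~\ref{bott-commu}) with $F_{i+1}$ replaced by the modified map $F_{i+1}^0 = F_i \oplus l$, and only when $l$ is unitary; in general the diagram commutes only asymptotically, with an error $\delta_i\to 0$, and only after restricting to bounded balls. Finite approximability is not ``tailored to guarantee'' the strict identity; it is tailored to make the \emph{defect} from commutativity vanish in a suitable limit.

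Because of this failure, the paper does not define $S{\frak C}_F(H')$ as $\varinjlim_n A_n$ with restricted connecting $*$-homomorphisms, nor is the result a $C^*$-subalgebra of $S{\frak C}(H')$ — the paper states explicitly that it is not. Instead the construction works on truncated algebras $S_{r_i}{\frak C}_{F_i}(D'_{r_i}) = F_i^*\bigl(S_{s_i}{\frak C}(D_{s_i})\bigr)$ over balls, declares a family $\{\alpha_i\}_{i\ge i_0}$ to be $F$-compatible when $\alpha_i = F_i^*\bigl(\beta(u_{i_0})\bigr)$ for a single $u_{i_0}$, defines a seminorm by the iterated limit $\lim_j \lim_i \|\alpha_i|_{D'_{r_j}}\|$ (using that $F_i$ and $l_i$ converge on each bounded set and that the maps involved are norm-decreasing), and takes the completion. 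Your proposal is missing both the asymptotic (rather than exact) nature of the compatibility and the restriction-to-balls mechanism that makes the norm converge; without these your inductive system is not actually well defined, and the claim that the limit sits inside $S{\frak C}(H')$ is false.
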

This $C^*$ algebra
coincides with   $F^*(C_0(E, Cl(E)) )$ above,
in   finite-dimensional case.

If a discrete group $\Gamma$ acts linearly and isometrically,
then it also acts on  $S{\frak C}_F(H)$.

The following is our main theorem.
\begin{thm}
Suppose $F: H' \to H$ is finitely approximable.
Then it induces a $*$-homomorphism
$$F^*: S{\frak C}(H) \to S{\frak C}_F(H').$$
In particular it induces a homomorphism between $K$-groups
$$F^*: K_*(S{\frak C}(H)) \to K_*(S{\frak C}_F(H')).$$
\end{thm}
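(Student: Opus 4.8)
The plan is to realise $F^{*}$ as the direct limit of a compatible family of $*$-homomorphisms defined on the finite-dimensional building blocks out of which $S{\frak C}(H)$ and $S{\frak C}_{F}(H')$ are assembled, and then to deduce the $K$-theory statement from functoriality of $K$-theory.

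First I would unwind Definition \ref{fin-appro}: finite approximability yields a cofinal directed family of finite-dimensional subspaces $E_{a}\subseteq H$ and $E_{a}'\subseteq H'$, with $l(E_{a}')=E_{a}$, together with the associated proper maps $F_{a}=l_{a}+c_{a}\colon E_{a}'\to E_{a}$ whose linear parts $l_{a}$ are isomorphisms, such that $S{\frak C}(H)=\varinjlim_{a} C_{0}(\mathbb{R})\hat{\otimes} C_{0}(E_{a},Cl(E_{a}))$ and $S{\frak C}_{F}(H')=\varinjlim_{a}{\frak A}_{F}(E_{a}')$, where ${\frak A}_{F}(E_{a}')$ denotes the image of $C_{0}(\mathbb{R})\hat{\otimes} C_{0}(E_{a},Cl(E_{a}))$ under $\mathrm{id}\hat{\otimes} F_{a}^{*}$ inside $C_{0}(\mathbb{R})\hat{\otimes} C_{0}(E_{a}',Cl(E_{a}'))$, and where the connecting maps of both systems are (restrictions of) the Bott structure maps. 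On each level the finite-dimensional pull-back $u\mapsto\bigl(v'\mapsto\bar{l}_{a}^{*}(u(F_{a}(v')))\bigr)$ is the composition of the $*$-homomorphism given by precomposition with the proper continuous map $F_{a}$, which lands in $C_{0}$ precisely because $F_{a}$ is proper, with the pointwise $*$-isomorphism of Clifford algebras induced by the polar unitary $\bar{l}_{a}$; tensoring with the identity on $C_{0}(\mathbb{R})$ then gives a $*$-homomorphism $F_{a}^{*}\colon C_{0}(\mathbb{R})\hat{\otimes} C_{0}(E_{a},Cl(E_{a}))\to{\frak A}_{F}(E_{a}')$, surjective onto ${\frak A}_{F}(E_{a}')$ by construction.

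The heart of the argument is to check that $\{F_{a}^{*}\}_{a}$ is a morphism of inductive systems, i.e.\ that for $a\le b$ one has $(\mathrm{id}\hat{\otimes} F_{b}^{*})\circ\beta_{ba}^{E}=\beta_{ba}^{E'}\circ(\mathrm{id}\hat{\otimes} F_{a}^{*})$, where $\beta_{ba}^{E}$ and $\beta_{ba}^{E'}$ are the Bott structure maps. Writing $E_{b}=E_{a}\oplus E_{ba}$ and $E_{b}'=E_{a}'\oplus E_{ba}'$ with respect to the flag supplied by finite approximability, $\beta_{ba}^{E}$ equals $\beta_{E_{ba}}\hat{\otimes}\mathrm{id}$ and inserts the Bott element, namely the functional calculus in the Clifford operator $C_{E_{ba}}$, only in the new direction. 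Two facts then force the square to commute: (i) the Bott map is natural for linear isometric isomorphisms, so applying the pointwise Clifford isomorphism induced by $\bar{l}_{ba}$ to $\beta_{E_{ba}}(f)$ returns exactly $\beta_{E_{ba}'}(f)$, the Clifford operator transforming correctly; and (ii) by the defining properties of finite approximability, $F_{b}$ respects the flag, restricts along it to $F_{a}$ up to the polar corrections, and has its new-direction component governed by $\bar{l}_{ba}$. Granting commutativity, the family $\{F_{a}^{*}\}_{a}$ passes to the colimit and defines a $*$-homomorphism $F^{*}:=\varinjlim_{a}F_{a}^{*}\colon S{\frak C}(H)\to S{\frak C}_{F}(H')$; it is $\Gamma$-equivariant because $\Gamma$ permutes the indices $a$, the polar unitaries $\bar{l}_{a}$ are $\Gamma$-equivariant since $l$ is, and hence each $F_{a}^{*}$ intertwines the actions. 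Applying the $K$-theory functor to $F^{*}$ then yields the homomorphism $F^{*}\colon K_{*}(S{\frak C}(H))\to K_{*}(S{\frak C}_{F}(H'))$.

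The step I expect to be the main obstacle is (ii): reconciling the Bott structure maps --- which are built from unbounded multipliers and functional calculus and are exactly natural only for orthogonal decompositions handled by genuine linear isometries --- with the nonlinear shear $c$ and with the polar unitaries $\bar{l}_{a}$, which need not split as block sums along the flag. One has to show that the nonlinear part does not obstruct commutativity of the square, and this is exactly the point at which the precise hypotheses in Definition \ref{fin-appro} (control of $c$ relative to the flag, near-compatibility of $F$ with the orthogonal projections onto the $E_{a}$) must be invoked; should exact commutativity fail on a fixed level, one would instead have to show that the two composites agree after passing further up the directed system, so that the colimit map remains well defined.
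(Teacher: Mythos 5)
Your high-level strategy --- building $F^*$ as a colimit of level-wise pullbacks $F_a^*$ --- matches the paper's starting point, and you have correctly located the danger zone; but the difficulty you flag at the end is not a patchable technicality, it is the crux, and your proposed fix does not resolve it. The paper states explicitly at the opening of Section 5 that the square
$$\begin{CD}
  S{\frak C}(W_i)  @> F^*_i >>  S {\frak C}(W_i')  \\
  @V  \beta  VV  @VV  \beta  V \\
  S{\frak C}(W_{i+1})  @> F_{i+1}^*  >> S{\frak C}(W_{i+1}')
\end{CD}$$
is ``far from commutative'': the nonlinear part $c$ can send the new directions $W_i'^{\perp}\cap W_{i+1}'$ anywhere in $H$, and this defect is not removed by going further up the directed system, because the Bott maps on the primed side are blind to $F$. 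Correspondingly your description of the target --- $S{\frak C}_F(H')=\varinjlim_a {\frak A}_F(E_a')$ with connecting maps the restrictions of the ambient Bott maps --- is incorrect: if those were the structure maps, $S{\frak C}_F(H')$ would be a $C^*$-subalgebra of $S{\frak C}(H')$, which the paper points out at the end of Subsection 4.1 is not the case. Finite approximability (Definition \ref{fin-appro}) gives only diagonal convergence $\|F-F_i\|\to 0$ on bounded sets of a fixed $W_{i_0}'$; it does not make $F_b$ respect the flag nor make its new-direction component governed by $\bar{l}_{ba}$, so your item (ii) fails.

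The idea you are missing is that the induced Clifford $C^*$-algebra (Definition \ref{induced Clifford}) is constructed precisely so as to make the compatibility automatic: its elements are, by definition, the $F$-compatible sequences $\{\alpha_i\}_{i\ge i_0}$ with $\alpha_i=F_i^*\bigl(\beta(u_{i_0})\bigr)$ for a fixed $u_{i_0}\in S_{s_{i_0}}{\frak C}(D_{s_{i_0}})$. In effect the connecting map on the target is $F_b^*\circ\beta\circ(F_a^*)^{-1}$, a pushforward of the source Bott map across $F^*$, not the ambient Bott map; with that choice the square commutes by construction and there is nothing to reconcile. The proof of Theorem \ref{cov-degree} is then short: given $v\in S{\frak C}(H)$, approximate it by $v_i\in S_{s_i}{\frak C}(D_{s_i})$ with $\lim_i\beta(v_i)=v$ (Lemma \ref{cpt-supp}); fix $i_0$ and set $u_i=\beta(v_{i_0})$; the family $\{F_i^*(u_i)\}_{i\ge i_0}$ is $F$-compatible, hence lies in $S{\frak C}_F(H')$; and an $\epsilon$-argument using that $F_i^*$ and $\beta$ are norm-nonincreasing $*$-homomorphisms shows the assignment $v\mapsto\lim_{i_0}F^*(v_{i_0})$ is well-defined, independent of the approximating family, and a $*$-homomorphism. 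No commutativity of diagrams across the nonlinear map $F$ is ever invoked.
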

If a discrete group $\Gamma$ acts on both $H'$ and $H$ linearly and isometrically and $F$ is $\Gamma$-finitely approximable,
then $F^*$ is a $\Gamma$-equivariant $*$-homomorphism,
that induces a homomorphism between $K$-groups
$$F^*: K_*(S{\frak C}(H) \rtimes \Gamma) 
\to K_*(S{\frak C}_F(H')\rtimes \Gamma)$$
where the crossed product is full.

Suppose $F: H' \to H$ is strongly finitely approximable.
Then by approximating these Hilbert spaces
by finite-dimensional linear subspaces, we can 
obtain its degree $\deg(F) \in \mathbb Z$.
Then the above $F^*$ is given by
$$F^*:   \mathbb Z \to \mathbb Z$$
which sends $1$ to deg $(F)$  by choosing a suitable orientation.

We also compute 
the group 
$K(S{\frak C}_F(H')\rtimes \mathbb Z)$
for some concrete cases in Section $6$.

In a successive paper, we will apply our construction
of the $K$ theoretic induced map to a monopole map that appears 
in gauge theory.
Over a compact oriented four manifold, 
it turns our that the monopole map is strongly finitely approximable, 
and its degree coincides with the Bauer-Furuta degree when $b^1 =0$  \cite{bauer and furuta}.
We will verify that the covering monopole map 
on the universal covering space
is $\Gamma$-finitely approximable, when its linear part gives 
a linear isomorphism. This produces a higher degree map of Bauer-Furuta type.
The idea of the degree goes back to an old result by A.Schwarz \cite{schwarz}.

\section{ Infinite-dimensional Bott periodicity}

\subsection{Quick review of HKT construction}
We  review the construction 
of the Hilbert space Clifford $C^*$-algebras
 by Higson, Kasparov and Trout \cite{hkt}.

Let $E$ be a finite-dimensional Euclidean space, and let $Cl(E)$ be 
the complex Clifford algebras,
where we  choose  positive sign on the multiplication  $e^2 = |e|^2 1$
for every  $e \in E$.

  This  admits a natural ${\mathbb Z}_2$-grading.
The embedding $C: E \to Cl(E)$ gives a map which is called the Clifford operator.
Let us denote ${\frak C}(E)= C_0(E, Cl(E))$.
Let $X: C_0({\mathbb R}) \to C_0( {\mathbb R})$
be given by $X(f)(x)=xf(x)$.
Then $C_0({\mathbb R}) $ also admits  a natural ${\mathbb Z}_2$-grading
by even or odd functions.
Both operators $C$ and $X$ are degree one and essentially self-adjoint 
unbounded multipliers on ${\frak C}(E)$ and 
$C_0({\mathbb R})$ respectively.
In particular $X \hat{\otimes} 1+ 1 \hat{\otimes} C$ is a 
degree one and essentially self-adjoint 
unbounded multiplier on $C_0({\mathbb R})\hat{\otimes} {\frak C}(E)$.

Let us introduce 
a  $*$-homomorphism
$$\beta: C_0({\mathbb R}) \to S {\frak C}(E) :=
C_0({\mathbb R})\hat{\otimes} {\frak C}(E)$$
 defined by
$$\beta: f \to f(X \hat{\otimes} 1+ 1 \hat{\otimes} C)$$
through  functional calculus.
Let
$E$ be a separable real Hilbert space,  and
$E_a \subset E_b \subset E$ be a pair of finite-dimensional linear subspaces. We  denote
 the orthogonal complement  by $E_{ba} := E_b \cap E_a^{\perp}$.
Then we have  the canonical  isomorphism 
$S {\frak C}(E_b) \cong S{\frak C}(E_{ba}) \hat{\otimes}  {\frak C}(E_a)$
 of $C^*$ algebras.
Let us introduce   a $*$-homomorphism
passing through this isomorphism
$$\beta_{ba} = \beta \hat{\otimes} 1: S {\frak C}(E_a ) \to
S{\frak C}(E_{ba}) \hat{\otimes}  {\frak C}(E_a) = S{\frak C}(E_b) $$

\begin{lem}[HKT, Proposition $3.2$]
Let $E_a \subset E_b \subset E_c$.
Then the composition
\begin{gather*}
S {\frak C}(E_a) 
\;\stackrel{\beta_{ba}}{\longrightarrow} \;
S {\frak C}(E_b) 
  \stackrel{\beta_{cb}}{\longrightarrow} \; 
S {\frak C}(E_c) \;
\end{gather*}
coincides with the $*$-homomorphism
$$\beta_{ca} : S{\frak C}(E_a) \to S {\frak C}(E_c).$$
\end{lem}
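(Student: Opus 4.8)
The plan is to check the identity of the two $*$-homomorphisms on the dense $*$-subalgebra generated by the elementary tensors $f \hat{\otimes} a$ with $f \in C_0(\mathbb R)$ and $a \in {\frak C}(E_a)$, and to reduce everything to the behaviour of the Clifford operators under orthogonal direct sums. First I would fix the orthogonal decompositions forced by $E_a \subset E_b \subset E_c$: with $E_{cb} = E_c \cap E_b^{\perp}$ and $E_{ba} = E_b \cap E_a^{\perp}$ one has $E_c = E_{cb} \oplus E_{ba} \oplus E_a$ and, crucially, $E_{ca} = E_{cb} \oplus E_{ba}$. These produce canonical identifications $Cl(E_b) = Cl(E_{ba}) \hat{\otimes} Cl(E_a)$ and $Cl(E_c) = Cl(E_{cb}) \hat{\otimes} Cl(E_{ba}) \hat{\otimes} Cl(E_a)$, hence $S{\frak C}(E_b) = C_0(\mathbb R) \hat{\otimes} {\frak C}(E_{ba}) \hat{\otimes} {\frak C}(E_a)$ and $S{\frak C}(E_c) = C_0(\mathbb R) \hat{\otimes} {\frak C}(E_{cb}) \hat{\otimes} {\frak C}(E_{ba}) \hat{\otimes} {\frak C}(E_a)$. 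One checks that under these identifications the codomains of $\beta_{ca}$ and of $\beta_{cb} \circ \beta_{ba}$ coincide, including the ordering of the graded tensor factors.

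Next I would unwind the definitions. By construction $\beta_{ba}$ sends $f \hat{\otimes} a$ to $f(X \hat{\otimes} 1 + 1 \hat{\otimes} C_{E_{ba}}) \hat{\otimes} a$, where $C_{E_{ba}}$ is the Clifford operator of $E_{ba}$. To apply $\beta_{cb} = \beta \hat{\otimes} 1_{{\frak C}(E_b)}$ to this, I would invoke the standard fact that a (nondegenerate) $*$-homomorphism of $C^*$-algebras extends to essentially self-adjoint unbounded multipliers and intertwines the continuous functional calculus; since $\beta_{cb}$ carries the multiplier $X \hat{\otimes} 1$ to $X \hat{\otimes} 1 \hat{\otimes} 1 + 1 \hat{\otimes} C_{E_{cb}} \hat{\otimes} 1$ and fixes $1 \hat{\otimes} C_{E_{ba}}$ (up to the ${\mathbb Z}_2$-grading signs produced when reordering the Clifford factors), the composite sends $f \hat{\otimes} a$ to $f\bigl(X \hat{\otimes} 1 \hat{\otimes} 1 + 1 \hat{\otimes} C_{E_{cb}} \hat{\otimes} 1 + 1 \hat{\otimes} 1 \hat{\otimes} C_{E_{ba}}\bigr) \hat{\otimes} a$. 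Here one needs essential self-adjointness of this three-term sum so that the functional calculus is defined and well behaved.

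The conceptual heart is then the additivity of the Clifford operator: under $Cl(E_{cb}) \hat{\otimes} Cl(E_{ba}) = Cl(E_{cb} \oplus E_{ba}) = Cl(E_{ca})$ the multiplier $C_{E_{cb}} \hat{\otimes} 1 + 1 \hat{\otimes} C_{E_{ba}}$ is precisely the Clifford operator $C_{E_{ca}}$. Substituting, the composite $\beta_{cb} \circ \beta_{ba}$ sends $f \hat{\otimes} a$ to $f(X \hat{\otimes} 1 + 1 \hat{\otimes} C_{E_{ca}}) \hat{\otimes} a$, which is exactly $\beta_{ca}(f \hat{\otimes} a)$ by definition. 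Since both maps are $*$-homomorphisms, hence norm continuous, and the elementary tensors $f \hat{\otimes} a$ generate a dense $*$-subalgebra of $S{\frak C}(E_a) = C_0(\mathbb R) \hat{\otimes} {\frak C}(E_a)$, the equality $\beta_{cb} \circ \beta_{ba} = \beta_{ca}$ follows on the whole algebra.

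I expect the main obstacle to be the bookkeeping of the unbounded multipliers in the graded setting: establishing that $X \hat{\otimes} 1 + 1 \hat{\otimes} C$, and more generally the three-term sums above, are essentially self-adjoint; that the functional-calculus definition of the Bott map is genuinely compatible with composition of $*$-homomorphisms and with the $\hat{\otimes} 1$ stabilizations; and that the ${\mathbb Z}_2$-grading signs arising when the Clifford tensor factors are permuted are exactly those making $C_{E_{cb} \oplus E_{ba}} = C_{E_{cb}} \hat{\otimes} 1 + 1 \hat{\otimes} C_{E_{ba}}$ hold. Once this analytic machinery is in place, the remainder is the purely algebraic additivity of the Clifford operator together with a density argument.
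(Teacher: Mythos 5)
The paper states this as a citation (HKT, Proposition 3.2) and does not reprove it, so there is no in-paper proof to compare against; your reconstruction is a faithful account of the Higson--Kasparov--Trout argument. The two load-bearing facts you isolate --- compatibility of the Bott map with continuous functional calculus after extending $\beta_{cb}$ to unbounded multipliers, and the additivity $C_{E_{ca}} = C_{E_{cb}} \hat{\otimes} 1 + 1 \hat{\otimes} C_{E_{ba}}$ under the canonical isomorphism $Cl(E_{cb}\oplus E_{ba}) \cong Cl(E_{cb}) \hat{\otimes} Cl(E_{ba})$ --- are exactly the content of the HKT proof, and the remark that the grading makes $X\hat{\otimes}1$ and $1\hat{\otimes}C$ anticommute (so that the square of the sum decouples, and essential self-adjointness can be checked) is the right technical point to flag. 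One small clarification worth making explicit: the step where you apply $\beta_{cb}$ to $\beta(f)\hat{\otimes}a$ really uses that $\beta_{cb} = \beta\hat{\otimes}\mathrm{id}_{{\frak C}(E_b)}$ acts only through the $C_0(\mathbb{R})$ factor, so it suffices to compute $(\beta\hat{\otimes}\mathrm{id}_{{\frak C}(E_{ba})})(\beta(f))$ in $C_0(\mathbb{R})\hat{\otimes}{\frak C}(E_{cb})\hat{\otimes}{\frak C}(E_{ba})$ and then tensor with $a$; this is implicit in your write-up and is correct, but spelling it out removes the slight ambiguity about which algebra $X\hat{\otimes}1 + 1\hat{\otimes}C_{E_{ba}}$ is a multiplier of.
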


\vspace{3mm}

\begin{defn}
We denote  the direct limit $C^*$-algebras by
$$S{\frak C}(E) = \varinjlim_a S{\frak C}(E_a)$$ 
where the direct limit is taken over all finite-dimensional linear
subspaces of $E$.
\end{defn}
It follows from the above construction that we can  obtain a 
 $*$ homomorphism
$$\beta: C_0({\mathbb R}) \to S{\frak C}(E).$$

Suppose a discrete group $\Gamma$ acts on $E$  linearly and  isometrically.
It induces  the action on $S{\frak C}(E)$ by
$$\gamma(f \hat{\otimes} u )(v) = f \hat{\otimes} \gamma(u(\gamma^{-1}(v))).$$
Thus,  the Bott map is $\Gamma$-equivariant.
For a $\Gamma$-$C^*$-algebra $A$, let us denote
$$K^{\Gamma}(A) := K(A \rtimes \Gamma)$$
where the right hand side $C^*$ algebra is given by the full crossed product of $A$ with $\Gamma$.

\begin{prop}[HKT, Theorem $3.5$]\label{HKT}
$\beta$ gives an equivariant  asymptotic equivalence from $S$ to $ S{\frak C}(E)$.

In particular it 
induces an isomorphism 
$$\beta_*: K_*^{\Gamma}(C_0({\mathbb R}))  \cong
 K_*^{\Gamma}(S {\frak C}(E)).$$
\end{prop}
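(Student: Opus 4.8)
The plan is to run the Dirac--dual-Dirac argument $\Gamma$-equivariantly and then pass to the direct limit over finite-dimensional subspaces. I would construct an asymptotic morphism $\alpha$ running back from $S{\frak C}(E)$ to $C_0(\mathbb R)$ (landing, more precisely, in a stabilization $C_0(\mathbb R)\hat{\otimes}\mathcal K$, harmless for $K$-theory) and prove that the composites $\alpha\circ\beta$ and $\beta\circ\alpha$ are $\Gamma$-equivariantly homotopic to the respective identity asymptotic morphisms. The $K$-theory assertion then follows formally, since a $\Gamma$-equivariant asymptotic equivalence descends to the full crossed products and asymptotic equivalences induce isomorphisms on $K$-theory.

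\textbf{The finite-dimensional model.} For a finite-dimensional Euclidean space $E$, let $D_E$ be the Dirac operator on $L^2(E)\hat{\otimes} Cl(E)$, so that the Bott--Dirac operator $B_E = C + D_E$ is an essentially self-adjoint, degree-one unbounded multiplier whose square equals, up to an additive constant, the harmonic-oscillator Hamiltonian $-\Delta_E + |v|^2$. Functional calculus in $X\hat{\otimes}1 + 1\hat{\otimes}B_E$, after rescaling by the asymptotic parameter $s$, supplies the candidate inverse asymptotic morphism $\alpha$. One then computes the two composites: in $\alpha\circ\beta$ one meets $(C + sD_E)^2$, whose heat semigroup is given explicitly by Mehler's formula, and as $s\to\infty$ the composite becomes homotopic to the rank-one projection onto the ground state of the oscillator, hence to the corner inclusion of $C_0(\mathbb R)$; the composite $\beta\circ\alpha$ is treated by the standard rotation homotopy on $E\oplus E$ interchanging the two summands, which exhibits it as homotopic to the identity. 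This step is the classical finite-dimensional Bott periodicity computation.

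\textbf{Equivariance, the limit, and conclusion.} Because $\Gamma$ acts on $E$ linearly and isometrically, the induced unitary representation on $L^2(E)\hat{\otimes} Cl(E)$ commutes with $C$, with $D_E$, and with the rotation homotopy, so $\beta$, $\alpha$ and all the homotopies above may be chosen $\Gamma$-equivariant. To transfer this to $S{\frak C}(E)=\varinjlim_a S{\frak C}(E_a)$, recall that the Bott maps are compatible with the connecting homomorphisms by the preceding compatibility lemma (HKT, Proposition $3.2$), so $\beta\colon C_0(\mathbb R)\to S{\frak C}(E)$ is well defined; the Dirac maps $\alpha_a$, on the other hand, do \emph{not} assemble into a morphism out of the direct limit, as there is no Dirac operator at infinity. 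I expect this to be the main obstacle. It should be overcome by letting the approximating subspace grow together with the asymptotic parameter --- choosing an exhausting family $E_{a(s)}\nearrow E$ and reparametrising so that $\alpha$ becomes a genuine asymptotic morphism on $\varinjlim_a S{\frak C}(E_a)$ once composed with the connecting maps --- the technical heart being the uniform estimate that the Mehler-kernel corrections in the two composites decay as $s\to\infty$ regardless of how fast $E_{a(s)}$ exhausts $E$. Granting this, the finite-dimensional homotopies glue to give $\alpha\circ\beta\simeq\mathrm{id}$ and $\beta\circ\alpha\simeq\mathrm{id}$ in the $\Gamma$-equivariant asymptotic category, so $\beta$ is a $\Gamma$-equivariant asymptotic equivalence. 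Applying the full-crossed-product functor on this category then yields an asymptotic equivalence $C_0(\mathbb R)\rtimes\Gamma \to S{\frak C}(E)\rtimes\Gamma$, and hence the asserted isomorphism $\beta_*\colon K_*^{\Gamma}(C_0(\mathbb R))\cong K_*^{\Gamma}(S{\frak C}(E))$.
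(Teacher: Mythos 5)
The paper does not prove this statement: it is quoted verbatim as Theorem~3.5 of Higson--Kasparov--Trout \cite{hkt}, and the $K$-theoretic consequence follows from the general fact that an equivariant asymptotic equivalence induces isomorphisms after taking full crossed products. Your sketch is an accurate high-level reconstruction of the HKT argument itself --- the Bott--Dirac operator, Mehler-kernel analysis of $\alpha\circ\beta$ yielding the ground-state projection, the rotation homotopy for $\beta\circ\alpha$, and crucially the correct identification of the genuine obstacle (there is no Dirac operator ``at infinity'', so the $\alpha_a$ do not directly assemble over $\varinjlim_a S{\frak C}(E_a)$) together with the correct remedy (let the approximating subspace $E_{a(s)}$ grow with the asymptotic parameter $s$ and prove uniform decay of the Mehler corrections). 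Since the paper's own treatment is a citation, there is nothing to contrast; your proposal is essentially the intended proof, and the only thing you would still need to supply to make it complete is the uniform-in-$a$ Mehler estimate and the verification that the resulting reparametrised $\alpha$ is genuinely an asymptotic morphism on the direct limit, both of which are carried out in \cite{hkt}.
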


\subsection{Direct limit $C^*$ algebras}
Let $H$ be a Hilbert space on which $\Gamma$ acts linearly and  isometrically.
Choose exhaustion by finite-dimensional linear subspaces $V_j \subset V_{j+1}$
 with dense union $\cup_j V_j \  \subset  H$.
Let $0< r_1 < \dots <r_i < r_{i+1} < \dots \to \infty$  be a divergent positive sequence with $r_{i+1} > \sqrt{2}r_i$, 
and let  $D_{r_i}^j \subset V_j$ be  the open disc with diameter $r_i$.

Consider the diagram of the embeddings
$$\begin{matrix}
& :  && : &&&& : \\
& \cap &&\cap &&&& \cap \\
\dots \subset & D_{r_i}^j & \subset & D_{r_{i+1}}^j  & \subset & \dots & \subset & V_j \\
& \cap && \cap &&&&  \cap \\
\dots \subset & D_{r_i}^{j+1} & \subset & D_{r_{i+1}}^{j+1}  & \subset & \dots & \subset & V_{j+1} \\
& \cap &&\cap &&& & \cap  \\
& : && : &&&& : \\
\dots \subset & D_{r_i} & \subset & D_{r_{i+1}} & \subset && \subset & H
\end{matrix}$$
Let $V_j^{\perp} \subset H$ be the orthogonal complement, and 
for $j' \geq j$,
 denote
\begin{align*}
&  V_{j,j'} := V_j^{\perp} \cap V_{j'},   \ \
  E_{r_i}^{j.j'} := D_{r_i}^j \times V_{j,j'}   \ \ \subset  \ \ V_{j'}, \\
 & E_{r_i}^j := D_{r_i}^j \times V_j^{\perp} \ \ \subset  \ \ H.
 \end{align*}

Let us set
$$S {\frak C}(D_{r_i}^j) \equiv C_0({\mathbb R}) \hat{\otimes} C_0(D_{r_i}^j ,Cl(V_j)). $$
Recall the Bott map
$$\beta: C_0({\mathbb R}) \to S{\frak C}(V) \qquad
f \to f(X \hat{\otimes} 1 + 1 \hat{\otimes} C)$$
for a finite-dimensional vector space $V$. Then 
we have $*$-homomorphisms
\begin{align*}
 \beta_{j,j'} = \beta \hat{\otimes} 1: 
 S {\frak C}(D_{r_i}^j)  &  \to 
  S C_0(V_{j,j'} ,Cl(V_{j,j'}))
  \hat{\otimes} {\frak C}(D_{r_i}^j)    \\
& \cong S {\frak C}(E_{r_i}^{j.j'})  \\
& \hookrightarrow  S {\frak C}(V_{j'})
 \end{align*}
where the last embedding is the open inclusion.

\begin{rem}
Trout developed 
a Thom isomorphism on  infinite-dimensional Euclidean bundles.
One may regard $C_0(D_{r_i}^j ,Cl(V_j)) $ 
as the set of continuous sections
on the Clifford algebra of the tangent bundle
$Cl(TD_{r_i}^j)$ vanishing at infinity.
Then $\beta_{j,j'}$ can be described as a $*$-homomorphism
$$\beta_{j,j'} = (1 \hat{\otimes} i_* ) 
\circ (\beta_{E^{j,j'}_{r_i}} \hat{\otimes}_{D^j_{r_i}}
\text{ id}_{D^j_{r_i}})$$
where $i : E^{j,j'}_{r_i} \hookrightarrow V'_j$ is the open inclusion.
See 
\cite{trout}  Section $2$.
\end{rem}

Let $S_r := C_0(-r,r) \subset C_0(\mathbb R)$,
and set
$$S_r {\frak C}(D^j_r) \equiv C_0(-r,r) \hat{\otimes} C_0(D_r^j, Cl(V_j)).$$
Then the above Bott map transforms as
\begin{align*}
 \beta_{j,j'} = \beta \hat{\otimes} 1: 
 S_{r_i} {\frak C}(D_{r_i}^j)    \to 
   S_{r_{i+1}} {\frak C}(D_{r_{i+1}}^{j'}).
 \end{align*}

\begin{lem}\label{cpt-supp}
The direct limit $C^*$-algebra
$$\lim_{i, j  \to \infty} S_{r_i} {\frak C}(D_{r_i}^j) = S{\frak C}(H)$$
coincides with the Clifford $C^*$-algebra of $H$.
\end{lem}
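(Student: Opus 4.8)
The claim is that the nested direct limit of the truncated algebras $S_{r_i}\mathfrak{C}(D_{r_i}^j)$, taken along the Bott maps $\beta_{j,j'}$, recovers $S\mathfrak{C}(H)$. The natural route is to compare this double-indexed direct system with the standard one defining $S\mathfrak{C}(H)=\varinjlim_a S\mathfrak{C}(E_a)$ (direct limit over finite-dimensional subspaces) from the HKT construction, together with the already-recorded Lemma on compatibility of the maps $\beta_{ba}$ and the open-inclusion maps $S_r\hookrightarrow C_0(\mathbb{R})$, $D_r^j\hookrightarrow V_j$. First I would observe that, since $r_{i+1}>\sqrt{2}\,r_i$ forces $r_i\to\infty$ and $\bigcup_j V_j$ is dense in $H$, the open sets $E_{r_i}^j=D_{r_i}^j\times V_j^{\perp}$ form an exhausting family: every point of $H$, and indeed every compact subset once we pass to a fixed $V_j$-slice, eventually lies in some $E_{r_i}^j$. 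Correspondingly $C_0(-r_i,r_i)$ exhausts $C_0(\mathbb{R})$ as $i\to\infty$. So at the level of the ``symbol'' spaces the two systems are cofinal.

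\medskip

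The substance is to check that the connecting maps match. I would factor $\beta_{j,j'}$ exactly as written in the excerpt: $\beta\hat\otimes 1$ into $S\,C_0(V_{j,j'},Cl(V_{j,j'}))\hat\otimes\mathfrak{C}(D_{r_i}^j)\cong S\mathfrak{C}(E_{r_i}^{j,j'})$ followed by the open inclusion $E_{r_i}^{j,j'}\hookrightarrow V_{j'}$ and the truncation $S_{r_i}\hookrightarrow S_{r_{i+1}}$, $D_{r_i}^{j'}\hookrightarrow D_{r_{i+1}}^{j'}$. The key point is that, up to the untruncated limit, these are precisely the maps $\beta_{ba}$ (with $E_a=V_j$, $E_b=V_{j'}$) composed with open inclusions, so by the HKT Lemma (Proposition $3.2$ in their numbering, the first Lemma above) successive compositions $\beta_{j,j'}\circ\beta_{j',j''}=\beta_{j,j''}$ agree on the nose, making the double system a genuine directed system whose connecting maps refine those of $\{S\mathfrak{C}(V_j)\}_j$. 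Passing to the limit in $i$ first (which replaces each $C_0(-r_i,r_i)\hat\otimes C_0(D_{r_i}^j,Cl(V_j))$ by $C_0(\mathbb{R})\hat\otimes C_0(V_j,Cl(V_j))=S\mathfrak{C}(V_j)$, since $C_0$ of an exhausting union of open sets is the inductive limit of the $C_0$'s of the pieces), and then in $j$, yields $\varinjlim_j S\mathfrak{C}(V_j)$. Because $\{V_j\}$ is cofinal among all finite-dimensional subspaces of $H$, this equals $\varinjlim_a S\mathfrak{C}(E_a)=S\mathfrak{C}(H)$; the order of the two limits is immaterial for a direct system indexed by $\mathbb{N}\times\mathbb{N}$ along the diagonal.

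\medskip

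The main obstacle I expect is purely bookkeeping: verifying that the open-inclusion map $S\mathfrak{C}(E_{r_i}^{j,j'})\hookrightarrow S\mathfrak{C}(V_{j'})$ is compatible, across varying $i$ and $j'$, with the Bott stabilization maps, so that the diagram in the excerpt genuinely commutes and not merely up to homotopy. This amounts to chasing the identification $S\mathfrak{C}(E_b)\cong S\mathfrak{C}(E_{ba})\hat\otimes\mathfrak{C}(E_a)$ through the truncations, checking that truncating the $\mathbb{R}$-variable to $(-r_i,r_i)$ and the disc to $D_{r_i}^j$ commutes with applying $\beta$ in the complementary directions — which it does, since $\beta$ is functorial for open inclusions and the functional calculus on $X\hat\otimes 1+1\hat\otimes C$ respects restriction of supports. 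Once that compatibility is in hand, the identification of limits is formal. One should also note the $\Gamma$-equivariance: since the $V_j$ can be chosen $\Gamma$-invariant and all maps involved are $\Gamma$-equivariant, the identification holds $\Gamma$-equivariantly, which is what is needed for the crossed-product statements later.
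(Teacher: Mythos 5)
Your proof is correct, and its Step 1 (commutativity of the truncated Bott maps, referring the double system back to the standard $\bar\beta_{j,j'}$) is the same as the paper's. Where you diverge is in establishing density: the paper constructs explicit cutoff functions $\psi_{i,j}=\chi_i\hat\otimes\varphi_{i,j}$, truncates the approximants $a_j\in S\mathfrak{C}(V_j)$ to $b_{i,j}=\psi_{i,j}a_j\in S_{r_i}\mathfrak{C}(D_{r_i}^j)$, and verifies via a $\sqrt{2}r$-radial estimate on $\beta(f)$ that $\lim_{i,j}\beta(b_{i,j})=a$. You instead invoke the abstract fact that $C_0$ of an open exhaustion is the inductive limit — so $\varinjlim_i S_{r_i}\mathfrak{C}(D_{r_i}^j)=S\mathfrak{C}(V_j)$ for each fixed $j$ — and then take the outer $j$-limit, using cofinality of $\{V_j\}$. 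Since the Bott embeddings $S\mathfrak{C}(V_j)\hookrightarrow S\mathfrak{C}(H)$ are isometric, this cleanly gives the density the paper establishes by hand; your route is more abstract, the paper's more self-contained and explicit. Two small corrections: the sets $E_{r_i}^j=D_{r_i}^j\times V_j^\perp$ are unbounded (since $V_j^\perp$ is infinite-dimensional), so calling them an ``exhausting family'' is misleading — the relevant exhaustion is of each $V_j$ by the discs $D_{r_i}^j$ and of $\mathbb{R}$ by $(-r_i,r_i)$; and your closing remark that ``the $V_j$ can be chosen $\Gamma$-invariant'' is false for a general infinite discrete group acting on $H$ (there need not be any nonzero finite-dimensional invariant subspace). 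Fortunately the lemma itself does not involve $\Gamma$, so that remark is extraneous.
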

\begin{proof}
{\bf Step 1:}
We claim that the commutativity
$$\beta_{j,j''} = \beta_{j',j''} \circ
 \beta_{j,j'} 
$$
holds.
To make the notations clearer, 
let us denote   $\bar{\beta}_{j,j'}: S{\frak C}(V_{j}) \to  S{\frak C}(V_{j'}) $
as the standard Bott map. Then
the commutativity
$\bar{\beta}_{j,j''} =  \bar{\beta}_{j',j''} \circ
\bar{\beta}_{j,j'} 
$ holds.

For $a_{i,j} \in S_{r_{i}}{\frak C}(D_{r_{i}}^j)$, 
$\bar{\beta}_{j,j''}(a_{i,j}) = \beta_{j,j''}(a_{i,j})$ holds in $S{\frak C}(V_{j''})$,
 passing through
the isometric embedding $S_{r_{i+2}}{\frak C}(D_{r_{i+2}}^{j''})
\hookrightarrow S{\frak C}(V_{j''})$. 
This implies the equalities
\begin{align*}
\beta_{j,j''}(a_{i,j})  & = \bar{\beta}_{j,j''}(a_{i,j})  
=   \bar{\beta}_{j',j''}\circ
 \bar{\beta}_{j,j'}(a_{i,j})
 \\
& =   \bar{\beta}_{j',j''}\circ
\beta_{j,j'}
(a_{i,j})
= 
\beta_{j',j''}\circ
\beta_{j,j'}
(a_{i,j}).
\end{align*}
This commutativity allows us to construct the direct limit $C^*$-algebra
$$\lim_{i, j \to \infty} S_{r_i} {\frak C}(D_{r_i}^j) .$$
There is a canonical isometric embedding
$$I: \lim_{i, j  \to \infty} S_{r_i} {\frak C}(D_{r_i}^j)
\hookrightarrow  S{\frak C}(H).$$

{\bf Step 2:} 
It remains to verify that the image of $I$ is dense.
For a linear inclusion $V \hookrightarrow H$, let
$\beta: S{\frak C}(V) \to S{\frak C}(H)$ be the Bott $*$-homomorphism
 into the Clifford $C^*$ algebra.
An element $a \in S{\frak C}(H)$ is given as $\lim_j \beta(a_j)$ for some $a_j \in S{\frak C}(V_j)$.
Let $\chi_i \in C_c((-r_i,r_i); [0,1])$ and 
$\varphi_{i,j} \in C_c(D_{r_i}^j; [0,1])$ be cutoff functions
with $\chi_i|(-r_{i-1},r_{i-1}) \equiv 1$ and 
$\varphi_{i,j}|D_{r_{i-1}}^j \equiv 1$.
Let us set $\psi_{i,j} =\chi_i \hat{\otimes} \varphi_{i,j}$.
We claim that $b_{i,j} :=  \psi_{i,j} a_j \in S{\frak C}(D_{r_i}^j)$ converges to the same element:
$$\lim_{i,j \to \infty} \beta(b_{i,j}) = a \in S{\frak C}(H).$$
Choose any $j_0$ and $\epsilon >0$.
 There exists  $r >0$ such that $a_{j_0}$ 
satisfies the estimate
$||a_{j_0}||_{C_0((D_r)^c)} < \epsilon$.
For each $f \in C_0({\mathbb R})$, there is some $r >0$ such that
$\beta(f) \in  S{\frak C}(H) $
satisfies the estimate
$||\beta(f)||_{C_0((D_r)^c)} < \epsilon$.
Thus, any $a_j \in  S{\frak C}(V_j) $ with $j > j_0$
also satisfies the estimate
$$||a_j||_{C_0((D_{\sqrt{2}r})^c)}  \ < \ 2 \epsilon$$
for all large $r >>1$.
 This   verifies the claim.
\end{proof}
\vspace{3mm}

\subsection{Asymptotic unitary operators}\label{auo}
Let $H'$ be a Hilbert space.
For two finite-dimensional linear subspaces, let 
us set 
$$d'(V_1',V_2') = \sup_{v_2} \  \inf_{v_1}
 \ \{ \  ||v_1-v_2|| : ||v_1||= ||v_2||=1, \ v_i \in V_i' \ \}.$$
$d'(V_1'; V_2')=0$ holds if and only if  $V_1'$  contains $V_2'$. 
Then we  introduce the distance between these planes by
$$d(V_1',V_2') = \min \ \{\  d'(V_1'; V_2') ,\  d'(V_2' ; V_1') \ \}.$$

Let $l: H' \to H$ be a linear isomorphism between Hilbert spaces,
and let
$$\bar{l} = l \circ \sqrt{l^* \circ l}^{-1}   : H' \to H$$ be the unitary corresponding to the polar decomposition of $l$.
For any finite-dimensional linear subspace $V \subset H$, 
let us compare two linear subspaces
 $$V' \equiv l^{-1}(V), \quad \bar{V}' \equiv \bar{l}^{-1}(V) 
 \quad \subset \quad 
H'.$$

\vspace{3mm}

The following lemma will not be used later,
but may be useful to understand how $V'$ and $\bar{V}'$ differ from each other.

\begin{lem}
Let $W'_i $ be 
 a family of  finite-dimensional linear subspaces
 with  $W'_i \subset W_{i+1}'$ so that 
 the union   $\cup_i W'_i \subset H'$ is dense.

For  any finite-dimensional linear subspace
$V' \subset H'$ and any small $\epsilon >0$, 
there is some $i_0$ such that for all $i \geq i_0$, 
 $$||(1- \bar{\text{pr}_i})|\bar{V}' || < \epsilon$$
holds, where 
$\bar{\text{pr}}_i:  H' \to \bar{W}_i'$ is the orthogonal projection
and  $\bar{W}'_i := \bar{l}^{-1}(l(W_i'))$.
\end{lem}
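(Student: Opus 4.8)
The plan is to deduce the claim from a single observation: $\bar{W}_i' = \bar l^{-1}(l(W_i'))$, and $l(W_i')$ is a family of finite-dimensional subspaces of $H$ whose union is dense (since $l$ is a bounded isomorphism, it takes the dense set $\cup_i W_i'$ onto a dense set), and applying $\bar l^{-1}$ — a unitary — preserves both inclusions and density. So $\cup_i \bar W_i' \subset H'$ is dense with $\bar W_i' \subset \bar W_{i+1}'$. Thus the statement reduces to the following elementary Hilbert-space fact: if $(U_i)$ is an increasing sequence of closed subspaces of a Hilbert space $\mathcal H$ with dense union, and $V'$ is finite-dimensional, then $\|(1-P_i)|_{V'}\| \to 0$, where $P_i$ is the orthogonal projection onto $U_i$. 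Applying this with $\mathcal H = H'$, $U_i = \bar W_i'$ gives exactly $\|(1-\bar{\mathrm{pr}}_i)|_{\bar V'}\| < \epsilon$ for $i \ge i_0$, noting that $\|(1-\bar{\mathrm{pr}}_i)|_{\bar V'}\| = \|(1-\bar{\mathrm{pr}}_i)|_{V'}\|$ is irrelevant — we only need it on $\bar V'$, which is where the subspace $V'$ in the statement lives after the bar; in fact the cleanest route is to apply the fact directly to the finite-dimensional subspace $\bar V' \subset H'$.

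For the elementary fact itself, I would argue as follows. Fix an orthonormal basis $e_1,\dots,e_n$ of $\bar V'$. For each $k$, by density of $\cup_i \bar W_i'$ there is a vector $w_k^{(i)} \in \bar W_i'$ with $\|e_k - w_k^{(i)}\| \to 0$ as $i \to \infty$; since $P_i$ is a contraction and $P_i w_k^{(i)} = w_k^{(i)}$, we get $\|(1-P_i)e_k\| = \|(1-P_i)(e_k - w_k^{(i)})\| \le \|e_k - w_k^{(i)}\| \to 0$. Hence for any unit vector $v = \sum_k a_k e_k \in \bar V'$ (so $\sum |a_k|^2 = 1$), $\|(1-P_i)v\| \le \sum_k |a_k|\,\|(1-P_i)e_k\| \le (\sum_k |a_k|)\max_k\|(1-P_i)e_k\| \le \sqrt n\,\max_k \|(1-P_i)e_k\|$, which tends to $0$ uniformly in $v$. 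Choosing $i_0$ so that $\sqrt n\,\max_k\|(1-P_i)e_k\| < \epsilon$ for all $i \ge i_0$ finishes it. One may alternatively phrase this spectrally: $P_i \to 1$ strongly, and strong convergence is uniform on compact sets, and the unit sphere of a finite-dimensional subspace is compact; but the basis computation is more self-contained.

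The only genuine point requiring care — and the one I would flag as the main obstacle, modest as it is — is checking that $\bar W_i' = \bar l^{-1}(l(W_i'))$ really does form an increasing family with dense union. Increasingness is immediate from $W_i' \subset W_{i+1}'$ and the injectivity of $l$ and $\bar l^{-1}$. For density, one uses that $l \colon H' \to H$ is a bounded linear isomorphism, hence a homeomorphism, so the image of the dense set $\cup_i W_i'$ is dense in $H$; then $\bar l^{-1}$, being unitary, is again a homeomorphism, so $\bar l^{-1}(\overline{\cup_i l(W_i')}) = \overline{\cup_i \bar l^{-1}(l(W_i'))} = H'$. Everything else is the soft functional-analytic lemma above, so the proof is short.
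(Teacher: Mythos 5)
Your proof is correct and follows essentially the same route as the paper's: both reduce the claim to the observation that $\cup_i \bar{W}_i' = \bar{l}^{-1}\bigl(l(\cup_i W_i')\bigr)$ is dense in $H'$ (because $l$ and $\bar{l}^{-1}$ are homeomorphisms), and then conclude by noting that an increasing family of subspaces with dense union has projections converging to the identity uniformly on the compact unit sphere of the finite-dimensional subspace $\bar V'$. You merely spell out the elementary projection step that the paper treats as immediate.
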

\begin{proof}
It is sufficient  to verify that for 
any finite-dimensional linear subspace $V' \subset H'$
and any $\epsilon >0$,  the estimate
$$d(\bar{V}' , \bar{W}'_i) < \epsilon$$
holds for all large $i >>1$.
Actually $\bar{W}' =H'$ holds when $W'=H'$
since  the polar decomposition gives the unitary.
Thus,  for any finite-dimensional linear exhaustion
$W'_i $ such that  $\cup_i W'_i \subset H'$ is dense, 
$\cup_i \bar{W}'_i \subset H'$ is also dense.
Therefore the estimate  holds.
\end{proof}


\begin{defn}\label{as-unitary}
Let  $H'$ and $H$   be two Hilbert spaces and
$l: H' \to H$ be a linear isomorphism.

$l$ is asymptotically unitary if for any $\epsilon >0$,
there is a finite-dimensional linear subspace 
$V \subset H'$ such that
the restriction
$$l: V^{\perp} \cong l(V^{\perp})$$
satisfies the estimate on its operator norm
$$||(l-\bar{l})| V^{\perp}|| < \epsilon$$
 where $\bar{l}$ is the unitary of the polar decomposition of
  $l: H' \to H$.
\end{defn}

\begin{rem}
In  a subsequent paper, we will verify that 
a self-adjoint elliptic operator on a compact manifold 
is asymptotically unitary between Sobolev spaces.
\end{rem}

\begin{lem}\label{dist}
Let $l: H' \cong H$ be asymptotically unitary.
For any $\epsilon >0$, there is a finite-dimensional vector subspace
$V_0' \subset H'$ such that the estimate
$$d(V', (\bar{l}^* \circ  l)(V')) < \epsilon$$
holds for any $V' \supset V'_0$.
\end{lem}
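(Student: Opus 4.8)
The plan is to make the operator $\bar{l}^{*}\circ l$ completely explicit and then read the statement off from Definition \ref{as-unitary} together with a single elementary perturbation estimate. Writing the polar decomposition as $l=\bar{l}\circ P$, the operator
$$P:=\bar{l}^{*}\circ l=\sqrt{l^{*}\circ l}$$
is a positive, self-adjoint, bounded and invertible operator on $H'$, and since $\bar{l}$ is unitary we have $l^{-1}=P^{-1}\bar{l}^{*}$, so $||P^{-1}||=||l^{-1}||$. Unitarity of $\bar{l}$ also gives $(l-\bar{l})v=\bar{l}\,(P-1)v$, hence $||(l-\bar{l})v||=||(P-1)v||$ for every $v\in H'$; therefore the hypothesis that $l$ is asymptotically unitary says precisely that for every $\delta>0$ there is a finite-dimensional subspace $V\subset H'$ with $||(P-1)|V^{\perp}||<\delta$, i.e.\ $P$ is uniformly close to the identity off a finite-dimensional subspace.

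Given $\epsilon>0$, which we may assume satisfies $\epsilon<2$, I would first fix $\delta>0$ with $2\,||l^{-1}||\,\delta<\epsilon$, choose the associated finite-dimensional subspace $V\subset H'$ as above, and then set $V_0':=V+P(V)$, which is again finite-dimensional. Now let $V'\supset V_0'$ be arbitrary, so that both $V\subset V'$ and $P(V)\subset V'$. Given a unit vector $w\in P(V')$, write $w=P(u)$ with $u\in V'$, decompose $u=u_0+u_1$ orthogonally with $u_0\in V$ and $u_1\in V'\cap V^{\perp}$, and put $v:=P(u_0)+u_1$. Then $P(u_0)\in P(V)\subset V'$ and $u_1\in V'$, so $v\in V'$, and
$$||w-v||=||(P-1)u_1||\le\delta\,||u_1||\le\delta\,||u||\le\delta\,||P^{-1}||\,||w||=\delta\,||l^{-1}||,$$
using $||(P-1)|V^{\perp}||<\delta$, the orthogonality $||u_1||\le||u||$, and $||u||\le||P^{-1}||\,||P(u)||$. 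Since $\delta\,||l^{-1}||<\epsilon/2<1$ we get $||v||\ge 1-\delta\,||l^{-1}||>0$; writing $\tilde{v}:=v/||v||$, we then have $||w-\tilde{v}||\le 2\,||w-v||\le 2\,\delta\,||l^{-1}||<\epsilon$. Thus every unit vector of $P(V')$ lies within $\epsilon$ of a unit vector of $V'$, which is exactly the estimate $d'(V',P(V'))<\epsilon$ in the notation of Section \ref{auo}; hence $d(V',(\bar{l}^{*}\circ l)(V'))\le d'(V',P(V'))<\epsilon$, as claimed.

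The routine analytic bookkeeping above is not the difficulty; the one structural point, and the step I would settle first, is that $V_0'$ must be taken to contain $P(V)$ and not merely $V$. If one only required $V'\supset V$, then in the decomposition $w=P(u_0)+P(u_1)$ the term $P(u_0)$ --- which points along the ``bad'' directions $V$, where $P$ is not close to the identity --- need not lie in $V'$, and its direction is not controlled by the smallness of $P-1$, so there would be no reason for $w$ to be approximable by a vector of $V'$; absorbing $P(V)$ into $V_0'$ is exactly what forces $P(u_0)\in V'$. A secondary care point is the passage from the operator-norm bound on $P-1$ to the gap-type distance $d$, in particular the renormalization $v\mapsto v/||v||$, which is why it is convenient to assume $\epsilon<2$ from the outset; and one should observe at the start that $l$ being a bounded linear isomorphism is precisely what guarantees that $P^{-1}=|l|^{-1}$ is bounded, so that the inequality $||u||\le||l^{-1}||\,||P(u)||$ used above is legitimate.
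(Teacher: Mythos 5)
Your proof is correct, and it takes a genuinely different route from the paper's. The paper works at the operator level: decomposing $P:=\bar{l}^{*}l$ into four blocks $A,B,C,D$ with respect to $V'\oplus(V')^{\perp}$, the asymptotic unitarity hypothesis (applied to the restriction of $P-1$ to $(V')^{\perp}$) directly gives $||B||<\epsilon$ and $||D-\text{id}||<\epsilon$, and then --- the step your argument avoids entirely --- the self-adjointness of $P$ forces $C=B^{*}$, hence $||C||<\epsilon$, from which the gap $d(V',P(V'))$ is bounded; consequently the paper can take $V_0'$ to be just the subspace furnished by Definition \ref{as-unitary}. You instead argue at the vector level: you enlarge the reference subspace to $V_0'=V+P(V)$ so that, after splitting $u=u_0+u_1$ along $V\oplus(V'\cap V^{\perp})$, the approximant $P(u_0)+u_1$ automatically lies in $V'$ and the error $(P-1)u_1$ is controlled by the estimate off $V$. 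The paper's route is shorter, but yours makes no use of the self-adjointness of $\bar{l}^{*}l$ (so it would survive a non-self-adjoint perturbation), and it tracks the constant $||l^{-1}||=||P^{-1}||$ explicitly --- a factor which the paper's stated bound $d(V',P(V'))\leq||C||$ quietly omits, though harmlessly since $\epsilon$ is arbitrary.
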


\begin{proof}
Let $V' \subset H'$  be a closed linear subspace, and $(V')^{\perp}$
be  its orthogonal complement.  
Let $\text{pr}: H' \to V'$ be the orthogonal projection.

{\bf Step 1:}
Take a finite-dimensional subspace $V_0' \subset H'$
so that 
  $l$  
 satisfies the estimate
 $||(l-\bar{l})| (V')^{\perp}|| < \epsilon$ for any $V' \supset V_0'$.
 Then   the operator norm of the restriction satisfies the estimate
$$||(\bar{l}^*l - \text{pr} \circ \bar{l}^*l )|V'|| < \epsilon.$$
 Decompose the operator $\bar{l}^* l$ with respect to 
$V' \oplus (V')^{\perp}$, and express 
$\bar{l}^* l $ by a matrix form
$$\begin{pmatrix}
 A &B \\
 C & D
\end{pmatrix}$$
where both the estimates
$$||D - \text{id }|| , ||B|| \ < \epsilon$$
hold.

{\bf Step 2:}
$C =B^*$ holds since $\bar{l}^*l $ is self-adjoint.
Hence  the estimate $||C|| < \epsilon$ also holds.
Then the conclusion holds because  the estimate 
$$d(V', (\bar{l}^* \circ  l)(V')) \leq ||C||$$
holds.
\end{proof}

\subsection{A variant of  Clifford $C^*$-algebra}\label{var-bott}
Let us introduce a variant of the  HKT construction.
Ultimately,  the result of the $C^*$-algebra turns out to be
$*$-isomorphic to  the original one given
 by HKT. This variant  naturally appears when one considers 
 the induced Clifford $C^*$-algebra we introduce later.

Let $l: H' \cong H$ be an asymptotically unitary  isomorphism.
Let $E \subset H$ be a finite-dimensional Euclidean space, and
denote
$$E' = l^{-1}(E) , \quad \bar{E}' = \bar{l}^{-1}(E).$$
The map
$$C_l \equiv \bar{l}^{*} \circ l: E' \to  \bar{E}' \hookrightarrow Cl(\bar{E}')$$ 
 is  called the {\em induced Clifford operator}.
Let us denote
 $${\frak C}_l(E')= C_0(E', Cl(\bar{E}'))$$
  and
introduce a  $*$-homomorphism
$$\beta_l: C_0({\mathbb R}) \to S {\frak C}_l(E') \equiv 
C_0({\mathbb R})\hat{\otimes} {\frak C}_l(E')$$
 defined by
$\beta_l: f \to f(X \hat{\otimes} 1+ 1 \hat{\otimes} C_l)$
by functional calculus.

Let
$E'_a \subset E'_b \subset H'$ be a pair of finite-dimensional linear subspaces, and denote
 the orthogonal complement  as $E'_{ba} = E'_b \cap (E'_a)^{\perp}$.

\begin{lem}\label{l-bott}
Let $l:H' \cong H$ be asymptotically unitary.

Then 
 there is a finite-dimensional vector space $V' \subset H'$ such that
there is a canonical $*$-isomorphism
$$I_{ba}: S{\frak C}_l(E'_{ba}) \hat{\otimes}  {\frak C}_l(E'_a) \cong
S {\frak C}_l(E'_b) $$
 if the inclusion $V' \subset E'_a$ holds.
 \end{lem}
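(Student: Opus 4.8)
The plan is to imitate the proof of the HKT splitting $S{\frak C}(E_b)\cong S{\frak C}(E_{ba})\hat{\otimes}{\frak C}(E_a)$ (\cite{hkt}, Proposition~3.2), tracking the linear twist throughout. Write $S{\frak C}_l(E'_b)=C_0(\mathbb R)\hat{\otimes}C_0(E'_b,Cl(\bar{E}'_b))$. The base-space part splits for free: since $E'_a\subseteq E'_b$, the decomposition $E'_b=E'_{ba}\oplus E'_a$ is \emph{orthogonal} in $H'$ by the definition of $E'_{ba}$, so the problem is reduced to splitting the Clifford coefficient algebra $Cl(\bar{E}'_b)$. This is the delicate point, because $\bar{E}'_\bullet=\bar{l}^{-1}(l(E'_\bullet))=(\bar{l}^{*}l)(E'_\bullet)$ and $\bar{l}^{*}l=\sqrt{l^{*}l}$ does \emph{not} preserve orthogonal complements; a priori $\bar{E}'_b=\bar{E}'_{ba}+\bar{E}'_a$ is only a (possibly skew) direct sum.

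For the Clifford coefficient I would argue as follows. One has $\bar{E}'_a\subseteq\bar{E}'_b$, $\bar{E}'_{ba}\cap\bar{E}'_a=\sqrt{l^{*}l}(E'_{ba}\cap E'_a)=0$, and $\dim\bar{E}'_{ba}+\dim\bar{E}'_a=\dim E'_b=\dim\bar{E}'_b$; hence the orthogonal projection $H'\to(\bar{E}'_a)^{\perp}$ restricts to a linear isomorphism $\bar{E}'_{ba}\xrightarrow{\ \sim\ }\bar{E}'_b\cap(\bar{E}'_a)^{\perp}$, and its polar part is a canonical isometry $\iota_{ba}\colon\bar{E}'_{ba}\to\bar{E}'_b\cap(\bar{E}'_a)^{\perp}$. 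Via $\iota_{ba}$ one gets a canonical graded $*$-isomorphism $Cl(\bar{E}'_{ba})\hat{\otimes}Cl(\bar{E}'_a)\cong Cl\bigl(\bar{E}'_b\cap(\bar{E}'_a)^{\perp}\bigr)\hat{\otimes}Cl(\bar{E}'_a)=Cl(\bar{E}'_b)$, the last equality being the genuine orthogonal splitting of $\bar{E}'_b$ inside $H'$. Combined with the base-space identification $E'_b=E'_{ba}\oplus E'_a$, this identifies ${\frak C}_l(E'_{ba})\hat{\otimes}{\frak C}_l(E'_a)$ with ${\frak C}_l(E'_b)$, hence (tensoring with $C_0(\mathbb R)$) yields the sought $*$-isomorphism $I_{ba}\colon S{\frak C}_l(E'_{ba})\hat{\otimes}{\frak C}_l(E'_a)\xrightarrow{\ \sim\ }S{\frak C}_l(E'_b)$.

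This construction makes sense for arbitrary $E'_a$; asymptotic unitarity and the subspace $V'$ are needed to make $I_{ba}$ \emph{canonical} in the way required afterwards --- compatible with the induced Bott maps $\beta_l$ and with the direct system, i.e. so that the induced Clifford operator $C_l=\bar{l}^{*}l$ on $E'_b$ transports under $I_{ba}$ to $(C_l|E'_{ba})\hat{\otimes}1+1\hat{\otimes}(C_l|E'_a)$. A quick computation shows this demands $\bar{E}'_{ba}\perp\bar{E}'_a$, equivalently $\langle l^{*}l\,u,v\rangle=\langle u,v\rangle=0$ for $u\in E'_{ba}$, $v\in E'_a$; that is, $\bar{l}^{*}l$ must essentially respect the splitting $H'=E'_a\oplus(E'_a)^{\perp}$. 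I would therefore take $V':=V'_0$ furnished by Lemma~\ref{dist} for a fixed small $\epsilon$. Then for every $E'_a\supseteq V'$ we get $d(E'_a,(\bar{l}^{*}l)(E'_a))<\epsilon$, and the matrix-block estimate in the proof of Lemma~\ref{dist} shows the off-diagonal blocks of $\bar{l}^{*}l$ relative to $E'_a\oplus(E'_a)^{\perp}$ have norm $<\epsilon$, so $\bar{E}'_{ba}$ and $\bar{E}'_a$ are $\epsilon$-nearly orthogonal and $\iota_{ba}$ is an $\epsilon$-perturbation of the inclusion; this is exactly the control one leverages (with the small errors washing out in the passage to direct limits, cf. Lemma~\ref{cpt-supp}) to pin down $I_{ba}$ as the canonical $*$-isomorphism.

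The main obstacle, as the above indicates, is reconciling the non-orthogonality of the images under $\bar{l}^{*}l=\sqrt{l^{*}l}$ with the requirement that the canonical identification of Clifford coefficients intertwine the induced Clifford operators $C_l$ (not merely identify the underlying $C^{*}$-algebras) --- it is precisely this that forces the asymptotic-unitarity hypothesis and the auxiliary subspace $V'$. Once $I_{ba}$ has been pinned down, the remaining verifications --- the $C_0$-splitting, the placement of the $C_0(\mathbb R)$ factor, naturality in the pair $(a,b)$, and the composition law $I_{ca}=I_{cb}\circ(I_{ba}\hat{\otimes}\mathrm{id})$ (the twisted analogue of the composition of Bott maps in \cite{hkt}, Proposition~3.2) --- are routine bookkeeping.
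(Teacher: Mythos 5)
Your proposal is correct and matches the paper's proof: both construct $I_{ba}$ by orthogonally projecting $\bar{E}'_{ba}$ onto the orthocomplement $\hat{E}'_{ba}:=\bar{E}'_b\cap(\bar{E}'_a)^{\perp}$ of $\bar{E}'_a$ inside $\bar{E}'_b$, replacing this projection by the unitary of its polar decomposition, extending it to a graded $*$-isomorphism $Cl(\bar{E}'_{ba})\hat{\otimes}Cl(\bar{E}'_a)\cong Cl(\bar{E}'_b)$, and choosing $V'$ via Lemma~\ref{dist} so that this unitary is an $\epsilon$-perturbation of the inclusion. The only minor difference is that you obtain bijectivity of the projection by a dimension count (valid for arbitrary $E'_a$) whereas the paper derives it from the $\epsilon$-estimate, and your remark that the estimate is really what is needed for the later asymptotic commutativity rather than for the mere existence of $I_{ba}$ is a fair sharpening.
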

 
 \begin{proof}
 Let $V'$ be the vector subspace  given by Lemma \ref{dist}. 
 Let $\hat{E}'_{ba} \subset \bar{E}'_b$ be the orthogonal complement of $\bar{E}'_a$,
 and consider the orthogonal projection
 $$\hat{\text{pr}}: \bar{E}'_{ba} \to \hat{E}'_{ba}.$$
 
 By the assumption,
  $l^* \circ l$ is almost unitary on $E'_{ba}$
 so that the operator norm satisfies the estimate
 $||(l^* \circ l ) - \text{ id } | E'_{ba}|| < \epsilon$.
 The estimate
 $d(E_a', \bar{E}_a') < \epsilon$ also holds  from Lemma \ref{dist}. 
Thus,  the operator norm of the above projection
 satisfies the estimate
 $$||\hat{\text{pr}} - \text{ id } |  \bar{E}'_{ba}|| < 2 \epsilon.$$
 
 In particular the projection gives an isomorphism.
 Let $\hat{\text{\bf pr}}: \bar{E}'_{ba} \to \hat{E}'_{ba}$
 be the unitary of the polar decomposition.
 It also satisfies the estimate
  $||\hat{\text{\bf pr}} - \text{ id } |  \bar{E}'_{ba}|| < 4 \epsilon$,
which  induces a $*$-isomorphism
 $$\hat{\text{\bf pr}}: Cl(\bar{E}'_{ba}  )  \cong  Cl( \hat{E}'_{ba}).$$
 It extends to the 
 $*$-isomorphism
 $$\hat{\text{\bf pr}} \hat{\otimes} 1: Cl(\bar{E}'_{ba}  )  \hat{\otimes}
 Cl(\bar{E}'_{a}  ) 
  \cong  Cl( \hat{E}'_{ba})\hat{\otimes}
 Cl(\bar{E}'_{a}  )  \cong 
 Cl(\bar{E}'_{b}  ) $$
  which induces the desired $*$-isomorphism
  $$S{\frak C}_l(E'_{ba}) \hat{\otimes}  {\frak C}_l(E'_a) \cong
S {\frak C}_l(E'_b) .$$
 \end{proof}

 It follows from Lemma \ref{l-bott} that there
  is a canonical $*$-homomorphism
$$\beta_{ba} = \beta_l \hat{\otimes} 1: S {\frak C}_l(E'_a ) \to
S{\frak C}_l(E'_{ba}) \hat{\otimes}  {\frak C}_l(E'_a) \cong  S{\frak C}_l(E'_b) .$$

 \begin{rem}
 Surely  $\hat{\text{\bf pr}}$ induces a linear map
 $$\hat{\text{\bf pr}}: Cl(\bar{E}'_{ba}  )  \to Cl( \hat{E}'_{ba})$$
 by setting $u =u_1+u_2 \in Cl(\hat{E}'_{ba}) \oplus Cl^0(\bar{E}'_{a})$ to
 $u_1 \in Cl(\hat{E}'_{ba}) $, where $Cl^0(E)$ is the scalarless part of $Cl(E)$.
 However it cannot be ``almost'' $*$-isomorphic in general,
 as $\dim E'_{ba}$ grows.
 To see this,  let us 
take any $u' \in \bar{E}'_{ba}$ 
and set $u'''  = u' - \hat{\text{\bf pr}}(u')  := u' - u''$.
For any orthonormal basis $\{u'_1, u'_2,  \dots \}$ of $\bar{E}'_{ba}$,
consider their product
$u'_1 u'_2 \dots u_m' \in Cl(\bar{E}'_{ba})$.
 \begin{align*}
  u'_1 u'_2 \dots & = (u''_1+u_1''') ( u''_2 +u_2''')(u_3''+u_3''')  \dots  (u_m''+u_m''') \\
  & = u''_1 u''_2 \dots  u_m'' + \text{ other terms }
  \end{align*}
  Each norm $||u_i''|| < 1$ is strictly less than $1$, and hence, the norm of 
  their product in the first term above may degenerate to zero.
 \end{rem}

Let $A_a$ be a family of $C^*$-algebras, and $\beta_{ba}: A_a \to A_b$ be 
a family of $*$-homomorphisms, where $\{a\}$ is a semi ordered set.
The family  $\{\beta_{ba}\}_{b,a}$ is {\em asymptotically commutative}
if for any $\epsilon >0$, there is $a_0$ such that for any triplet
 $c \geq b \geq a \geq a_0$,
the estimate
$$||\beta_{ca} - \beta_{cb} \circ \beta_{ba}|| < \epsilon$$
holds.

For $v_a \in A_a$, introduce   the set of equivalent classes
$$\bar{v}_a := \{ \beta_{ba}(v_a)\}_{b \geq a}$$
divided by all elements $\bar{v}'_a$ with
$$\lim_{b} \ ||\beta_{ba}(v'_a)||=0.$$
Consider the algebra generated by elements of the form
$\bar{v}_a$, where the sum is given by
$$\bar{v}_a +\bar{v}_{a'} = \{ \beta_{ba}(v_a )+ \beta_{ba'}(v_{a'}) \}_{b \geq a,a'}$$
 and the multiplication is given by
$$\bar{v}_a \cdot \bar{v}_{a'} = 
\{ \beta_{ba}(v_a) \cdot \beta_{ba'}(v_{a'}) \}_{b \geq a,a'}.$$

The direct limit $C^*$-algebra $A$
with respect to the family $\{\beta_{ba}: A_a \to A_b\}_{a,b}$
 is defined by
the closure of the above algebra with the norm
$$||\bar{v}_a||: = \lim_{b} \ ||\beta_{ba}(v_a)|| \qquad \qquad  (*)$$
We also denote it as
$$A := \varinjlim_a \ A_a.$$

Let us set
$$\beta_{ba} : = I_{ba} \circ \beta_l: 
S {\frak C}_l(E_a')  \to S {\frak C}_l(E_{ba}') \hat{\otimes} {\frak C}_l(E_a') 
\cong S {\frak C}_l(E_b')$$ 
where $E_a'$ run over all finite-dimensional subspaces, and 
$b \geq a$ if and only if $E_b' \supset E_a'$ holds.

It follows from the proof of Lemma \ref{l-bott} that the following lemma holds.

\begin{lem}
The family $\{ \beta_{ba}  : S {\frak C}_l(E_a') \to  S {\frak C}_l(E_b')\}_{b,a}$
is asymptotically commutative.
\end{lem}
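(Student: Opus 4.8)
The plan is to reduce the asymptotic commutativity of $\{\beta_{ba}\}$ to the exact commutativity of the standard HKT Bott maps $\{\bar\beta_{ba}\}$ of Lemma~\ref{HKT}'s predecessor (HKT Proposition $3.2$), using the $*$-isomorphisms $I_{ba}$ of Lemma~\ref{l-bott} and the operator-norm estimates produced in its proof. Concretely, fix $\epsilon>0$; by Lemma~\ref{dist} and Lemma~\ref{l-bott} choose a finite-dimensional $V'_0\subset H'$ so that for every $E'_a\supset V'_0$ all the relevant projections $\hat{\text{\bf pr}}$ (from $\bar E'_{\bullet}$-complements onto $\hat E'_{\bullet}$-complements) differ from the identity in operator norm by a controlled multiple of $\epsilon$, hence are $*$-isomorphisms of the corresponding finite-dimensional Clifford algebras that are close to the identity. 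Set $a_0$ to be the index of $V'_0$. Then for $c\geq b\geq a\geq a_0$ both $\beta_{ca}$ and $\beta_{cb}\circ\beta_{ba}$ are, after conjugating by the appropriate $I$'s, built from iterated applications of $\beta_l$; and the genuine HKT identity $\bar\beta_{ca}=\bar\beta_{cb}\circ\bar\beta_{ba}$ holds on the nose.

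The key steps, in order. First, unwind the definitions: $\beta_{ba}=I_{ba}\circ(\beta_l\hat\otimes 1)$, and similarly for the composite, so that $\beta_{ca}-\beta_{cb}\circ\beta_{ba}$ becomes a difference of two $*$-homomorphisms $C_0(\mathbb R)\to S{\frak C}_l(E'_c)$ each obtained from a functional-calculus map $f\mapsto f(X\hat\otimes 1+1\hat\otimes C_\bullet)$ followed by identifications $I$. Second, observe that the induced Clifford operator $C_l=\bar l^*\circ l$ on $E'_a$ agrees with the ordinary Clifford operator after the polar correction exactly on the subspace where $\bar l^*l$ is already unitary; for $E'_a\supset V'_0$ the discrepancy is $O(\epsilon)$ in the sense quantified in the proof of Lemma~\ref{dist} and Lemma~\ref{l-bott}. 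Third, use continuity of functional calculus: if $C_l$ and the ``honest'' Clifford operator underlying $\bar\beta$ differ by a bounded self-adjoint operator of norm $O(\epsilon)$, then $f(X\hat\otimes 1+1\hat\otimes C_l)$ and $f(X\hat\otimes 1+1\hat\otimes C)$ differ by $O(\epsilon)$ uniformly for $f$ in the unit ball, since $f$ is uniformly continuous and vanishes at infinity (this is the standard estimate used pervasively in \cite{hkt}). Fourth, the identification maps $I_{ba}$, $I_{cb}$, $I_{ca}$ are themselves $O(\epsilon)$-close to the ``literal'' splitting isomorphisms that make the $\bar\beta$'s strictly compatible, again by the $\hat{\text{\bf pr}}$-estimates; composing three such maps keeps the error $O(\epsilon)$. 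Collecting these, $\|\beta_{ca}-\beta_{cb}\circ\beta_{ba}\|\leq \|\bar\beta_{ca}-\bar\beta_{cb}\circ\bar\beta_{ba}\| + O(\epsilon) = O(\epsilon)$, and absorbing the constant into the choice of $\epsilon$ gives the claim.

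I expect the main obstacle to be bookkeeping the several layers of $\epsilon$-approximation so that they genuinely compose: one must check that the error introduced by replacing $C_l$ by $C$ on $E'_a$ is not amplified when one further applies $\beta_l$ for the step $E'_b\subset E'_c$ (i.e. that the constants do not depend on the dimension jump $\dim E'_c-\dim E'_a$), and that conjugating by $I_{cb}$ does not blow up the norm. The remark immediately preceding the statement warns precisely that the naive ``scalarless projection'' $\hat{\text{\bf pr}}$ is \emph{not} uniformly $*$-isomorphic as dimension grows, so the argument must route through the \emph{unitary} $\hat{\text{\bf pr}}$ of the polar decomposition — whose $O(\epsilon)$ bound, crucially, is independent of dimension because it is controlled fibrewise by $d(E'_a,\bar E'_a)<\epsilon$ from Lemma~\ref{dist}. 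Once that uniformity is isolated, each of the four steps above is a short estimate, and the conclusion follows. A clean way to present this is to note that everything happens inside $S{\frak C}_l(E'_c)$, where norms are computed fibrewise over $E'_c$ and the fibre is a fixed finite-dimensional Clifford algebra, so all estimates are pointwise-in-$v\in E'_c$ and then supped; this makes the dimension-independence transparent and is, in essence, ``the proof of Lemma~\ref{l-bott}'' run one more time, as the text asserts.
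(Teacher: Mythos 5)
The paper offers no proof of this lemma — it simply asserts that ``it follows from the proof of Lemma~\ref{l-bott}'' — so the real question is whether your reconstruction is sound. Your overall strategy (exploit the dimension-independent $O(\epsilon)$ bound on the polar-decomposition unitary $\hat{\text{\bf pr}}$ furnished by Lemmas~\ref{dist} and \ref{l-bott}, and compare the three identifications $I_{ba}$, $I_{cb}$, $I_{ca}$) is the right one and matches what the paper has in mind, and your last paragraph correctly isolates the key uniformity point (the fibrewise bound $d(E'_a,\bar E'_a)<\epsilon$ is what keeps constants independent of $\dim E'_c - \dim E'_a$). The detour through the exact HKT identity $\bar\beta_{ca}=\bar\beta_{cb}\circ\bar\beta_{ba}$ is a slightly different packaging than the paper's (which stays inside the $\beta_l$ world and never explicitly passes to the standard Bott maps on $S{\frak C}(E'_\bullet)$), but the two are essentially equivalent.

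The genuine problem is your Step 3. You assert that $C_l$ and the ordinary Clifford operator ``differ by a bounded self-adjoint operator of norm $O(\epsilon)$,'' and deduce the functional-calculus estimate from continuity. This premise is false: $C_l(v')-C(v') = (\bar l^* l - \mathrm{id})(v')$ has norm on the order of $\epsilon\|v'\|$, which is \emph{unbounded} over $E'$. The same unboundedness occurs when you compare the ``cross-term'' contribution to $C_l$ on $E'_{ca}=E'_{ba}\oplus E'_{cb}$ with the graded-tensor-product picture after applying $\hat{\text{\bf pr}}$: the perturbation is only relatively small, of size $O(\epsilon)\|v'\|$, not absolutely small. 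Consequently ``uniform continuity of $f$'' does not give you the estimate. What saves the argument — and what you gesture at but do not actually use — is that $f$ vanishes at infinity, so the perturbation $O(\epsilon)\|v'\|^2$ in the exponent of the HKT generators $e^{-x^2}$ and $xe^{-x^2}$ is damped by the Gaussian tail: one bounds $\bigl|e^{-\|\bar l^* l v'\|^2}-e^{-\|v'\|^2}\bigr|\leq O(\epsilon)\|v'\|^2 e^{-(1-O(\epsilon))\|v'\|^2} = O(\epsilon)$, and the multiplicativity of the Bott maps then extends the bound from the generators to all of $C_0(\mathbb R)$. You need to replace your bounded-perturbation step with this relative-bound-plus-decay argument (verified on the two generators and then propagated by the $*$-homomorphism property); as written the step would fail.
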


\begin{defn} \label{SC_l}
Let $l: H' \cong H$ be asymptotically unitary.
The direct limit $C^*$-algebra is given by
$$S{\frak C}_l(H') = \varinjlim_a \ S{\frak C}_l(E'_a)$$
where the norm is given in $(*)$    above.
\end{defn}

\begin{prop}\label{iso}
Assume $l$ is  asymptotically unitary.

Then there is a canonical $*$-isomorphism
$$S {\frak C}_l(H') \to S {\frak C}(H')$$
between the two Clifford $C^*$ algebras.

If a group $\Gamma$ acts on $H'$ linearly 
and  isometrically and $l$  is $\Gamma$-equivariant,
then this  $*$-isomorphism 
 is $\Gamma$-equivariant.
\end{prop}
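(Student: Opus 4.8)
The plan is to realize both $C^{*}$-algebras as direct limits of the same finite-dimensional pieces, up to a canonical reparametrization, and then to compare the two systems of connecting maps within the asymptotic-commutativity formalism set up above.

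For a finite-dimensional $E\subset H$, write $E'=l^{-1}(E)$ and $\bar{E}'=\bar{l}^{-1}(E)$. Since $\bar{l}^{*}\circ l\,(E')=\bar{l}^{-1}(l(E'))=\bar{E}'$, the restriction $T_{E}:=\bar{l}^{*}\circ l|_{E'}\colon E'\to\bar{E}'$ is a linear isomorphism, and pulling functions back along $T_{E}^{-1}$ defines a $*$-isomorphism
$$\phi_{E}\colon\ S{\frak C}_{l}(E')=C_{0}({\mathbb R})\,\hat{\otimes}\,C_{0}(E',Cl(\bar{E}'))\ \longrightarrow\ C_{0}({\mathbb R})\,\hat{\otimes}\,C_{0}(\bar{E}',Cl(\bar{E}'))=S{\frak C}(\bar{E}'),\qquad f\,\hat{\otimes}\,u\ \longmapsto\ f\,\hat{\otimes}\,(u\circ T_{E}^{-1}).$$
Its defining feature is that it carries the induced Clifford multiplier $1\hat{\otimes} C_{l}$, acting by $(C_{l}u)(v')=(\bar{l}^{*}l\,v')\cdot u(v')$, to the ordinary Clifford multiplier $1\hat{\otimes} C$ on $S{\frak C}(\bar{E}')$ (use $\bar{l}^{*}l(T_{E}^{-1}\bar{v})=\bar{v}$ for $\bar{v}\in\bar{E}'$); consequently $\phi_{E}\circ\beta_{l}=\beta$, so $\phi_{E}$ respects the whole Bott structure. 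Because $\bar{l}^{-1}$ is an isometric isomorphism, $\{\bar{E}':E\subset H\text{ finite-dimensional}\}$ is precisely the family of all finite-dimensional subspaces of $H'$, and $E'_{a}\subset E'_{b}\iff\bar{E}'_{a}\subset\bar{E}'_{b}$; so the family $\{\phi_{E_{a}}\}$ transports the system $\{S{\frak C}_{l}(E'_{a}),\,\beta_{ba}\}$ defining $S{\frak C}_{l}(H')$ to a system $\{S{\frak C}(\bar{E}'_{a}),\,\tilde{\beta}_{ba}\}$ of standard HKT pieces, with $\tilde{\beta}_{ba}:=\phi_{E_{b}}\circ\beta_{ba}\circ\phi_{E_{a}}^{-1}$.

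Next I would show that $\tilde{\beta}_{ba}$ is asymptotically equal to the genuine HKT Bott connecting map $\bar{\beta}_{ba}\colon S{\frak C}(\bar{E}'_{a})\to S{\frak C}(\bar{E}'_{b})$. The maps $\beta_{ba}=I_{ba}\circ\beta_{l}$ differ from an honest Bott map only through the unitary $\hat{\text{\bf pr}}$ appearing in the proof of Lemma~\ref{l-bott}, the gap $d(E'_{a},\bar{E}'_{a})$, and the defect $\|l^{*}l-\mathrm{id}\,|E'_{ba}\|$; by Lemma~\ref{dist} and the defining estimate of asymptotic unitarity (Definition~\ref{as-unitary}), all three are $<\epsilon$ as soon as $E'_{a}$ contains a fixed finite-dimensional subspace, uniformly in $b\geq a$. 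Hence $\|\tilde{\beta}_{ba}-\bar{\beta}_{ba}\|\to0$ as the index $a$ grows, uniformly over $b\geq a$. Now $\{S{\frak C}(\bar{E}'_{a}),\tilde{\beta}_{ba}\}$ is asymptotically commutative (this is the unlabeled Lemma just before Definition~\ref{SC_l}, transported by the $\phi$'s), while $\{S{\frak C}(\bar{E}'_{a}),\bar{\beta}_{ba}\}$ is commutative by HKT's Proposition~3.2 recalled above. Two asymptotically commutative systems over the same objects whose connecting maps are asymptotically equal share a common direct limit: the identification is induced by the identity maps $S{\frak C}(\bar{E}'_{a})\to S{\frak C}(\bar{E}'_{a})$ and is forced by the norm formula $(*)$ once one pushes representatives far enough into the systems that $\tilde{\beta}$ and $\bar{\beta}$ agree to within the prescribed tolerance. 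Composing $\varinjlim_{a}\phi_{E_{a}}$ with this identification yields a $*$-isomorphism $S{\frak C}_{l}(H')\to\varinjlim_{a}S{\frak C}(\bar{E}'_{a})=S{\frak C}(H')$, canonical because each $\phi_{E_{a}}$, and hence the limit, intertwines the Bott maps $\beta$.

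I expect the main obstacle to be this last step: verifying, inside the paper's direct-limit framework, that asymptotically equal connecting maps produce an isometric $*$-isomorphism of direct limits. Concretely one must check, using the uniform-in-$b$ bound on $\|\tilde{\beta}_{ba}-\bar{\beta}_{ba}\|$ together with asymptotic commutativity, that the resulting map is independent of the chosen representative, isometric for the norm $(*)$, and multiplicative and $*$-preserving; surjectivity onto a dense subalgebra and completeness then conclude. Everything before that — the construction of $\phi_{E}$, its intertwining of the Clifford operators, and the cofinality of the $\bar{E}'$-system — is routine. Finally, for the equivariant statement: if $\Gamma$ acts linearly and isometrically on $H'$ and $l$ is $\Gamma$-equivariant, then $\bar{l}=l(l^{*}l)^{-1/2}$ is $\Gamma$-equivariant by uniqueness of the polar decomposition, so $T_{E}=\bar{l}^{*}l$ commutes with the $\Gamma$-action; moreover $\gamma(E'_{a})=l^{-1}(\gamma E_{a})$ and $\gamma(\bar{E}'_{a})=\bar{l}^{-1}(\gamma E_{a})$, so $\Gamma$ merely permutes the index set and $\phi_{E_{\gamma a}}=\gamma\circ\phi_{E_{a}}\circ\gamma^{-1}$. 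Hence the family $\{\phi_{E_{a}}\}$, and therefore the induced isomorphism of direct limits, is $\Gamma$-equivariant.
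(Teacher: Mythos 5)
Your proposal is correct and reaches the conclusion along a route that differs in a meaningful structural detail from the paper's proof. The paper (Steps 1--3) compares $S\mathfrak{C}_l(E'_a) = C_0(\mathbb{R}) \hat{\otimes} C_0(E'_a, Cl(\bar{E}'_a))$ to $S\mathfrak{C}(E'_a) = C_0(\mathbb{R}) \hat{\otimes} C_0(E'_a, Cl(E'_a))$, keeping the base $E'_a$ fixed and changing the Clifford \emph{fiber} via $1 \hat{\otimes} \text{\bf pr}_1$, where $\text{\bf pr}_1$ is the unitary part of the orthogonal projection $\bar{E}'_a \to E'_a$. Since $\text{\bf pr}_1$ is only $\epsilon$-close to the identity, those vertical maps intertwine the Bott homomorphisms $\beta_1, \beta_2$ only approximately, and the $4\epsilon$-estimate of Step 2 is what drives the limit construction in Step 3 (where the paper sets $\text{\bf pr}(x) = \lim_a \beta_2(1 \hat{\otimes}\text{\bf pr}_1(x_a))$ and checks well-definedness, then builds an inverse the same way). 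You instead compare $S\mathfrak{C}_l(E'_a)$ to $S\mathfrak{C}(\bar{E}'_a)$, changing the \emph{base} by pulling back along $T_E = \bar{l}^*l|_{E'_a}$ and leaving the fiber $Cl(\bar{E}'_a)$ untouched. The structural gain is the \emph{exact} identity $\phi_E \circ \beta_l = \beta$ (your verification that $\phi_E$ transports $C_l$ to $C$ via $\bar{l}^*l\,T_E^{-1}\bar{v}'=\bar{v}'$ is correct), which isolates the entire approximation error to the connecting maps $\beta_{ba}$ --- concretely, to the correcting unitary $\hat{\text{\bf pr}}$ of Lemma \ref{l-bott} that compensates the non-orthogonality of $\bar{E}'_{ba}$ against $\bar{E}'_a$. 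The residual task you flag --- that two asymptotically commutative direct systems over the same index set with asymptotically equal connecting maps yield $*$-isomorphic direct limits, with the identification forced by the norm formula $(*)$ --- is exactly the content the paper supplies in Step 3, just phrased with a different choice of intermediate pieces; it is not a gap, only the part that needs to be written out with the same care as the paper's. Your treatment of $\Gamma$-equivariance ($\bar{l}$ is equivariant by uniqueness of the polar decomposition, hence $T_E$ commutes with the action and $\phi_{E_{\gamma a}}=\gamma\circ\phi_{E_a}\circ\gamma^{-1}$) mirrors the paper's Step 4.
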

\begin{proof}
{\bf Step 1:}
It follows from Lemma \ref{dist} and the assumption
 that
for any $\epsilon >0$, there is a finite-dimensional vector space $V'_0 \subset H'$
such that for all $E_a' \supset V'_0$, the following two  estimates  hold:
\begin{align*}
& d(E_a', \bar{l}^* \circ l(E_a')) \ < \ \epsilon, \\
& d((E_a')^{\perp}, \bar{l}^* \circ l((E_a')^{\perp})) \ < \ \epsilon.
\end{align*}
 
Take another $E_b' \supset E_a'$ with $E'_{ba}$, and let
 $\text{pr}_1: \bar{E}_a' \cong E_a'$ 
and  $\text{pr}_2:  \bar{E}_{ba}' \cong E_{ba}'$
be the orthogonal projections.
Their corresponding unitaries
 $\text{\bf pr}_i$ 
 satisfy the bounds
$$||\text{\bf pr}_i - \text{ id }|| < 2 \epsilon.$$

They extend to   $*$-isomorphisms
\begin{align*}
&  \text{\bf pr}_1: Cl(\bar{E}_a') \cong Cl(E_a') , \\
&  \text{\bf pr}_2:  Cl(\bar{E}_{ba}' ) \cong Cl(E_{ba}').
\end{align*}
In particular they induce the  $*$-isomorphisms
\begin{align*}
& \text{\bf pr}_1: C_0(E_a', Cl(\bar{E}_a') )\cong C_0(E_a', Cl(E_a')), \\
&  \text{\bf pr}_2: C_0(E_{ba}', Cl(\bar{E}_{ba}') )\cong C_0(E_{ba}', Cl(E_{ba}')).
\end{align*}

{\bf Step 2:}
Let us consider two Bott maps
\begin{align*}
& \beta_1: C_0({\mathbb R}) \to S {\frak C}_l(W') , \quad 
\beta_1( f)  = f(X \hat{\otimes} 1+ 1 \hat{\otimes} C_l), \\
& \beta_2: C_0({\mathbb R}) \to S {\frak C}(W') ,
\quad
\beta_2( f)  = f(X \hat{\otimes} 1+ 1 \hat{\otimes} C)
\end{align*}
and  the diagram
$$\begin{CD}
S {\frak C}_l(E_a')  @> \beta_1 >> S {\frak C}_l(E_{ba}') \hat{\otimes}  
C_0(E_a', Cl(\bar{E}_a')) \\
@V 1 \hat{\otimes} \text{\bf pr}_1 VV 
@V 1 \hat{\otimes} \text{\bf pr}_2  \hat{\otimes} \text{\bf pr}_1 
  VV \\
S {\frak C}(E_a')  @> \beta_2 >> S {\frak C}(E_{ba}') \hat{\otimes} C_0(E_a', Cl(E_a') )
\end{CD}$$
Denote 
$\text{\bf pr}_{21}: =  
\text{\bf pr}_2  \hat{\otimes} \text{\bf pr}_1
$. 
Then  this diagram satisfies the estimate
$$||1 \hat{\otimes} \text{\bf pr}_{21} \circ \beta_1
\  - \  \beta_2 \circ 1 \hat{\otimes} \text{\bf pr}_1|| \ < \ 4 \epsilon.$$ 
$\epsilon$ can be arbitrarily small by choosing large $E_a'$.

{\bf Step 3:}
Let us take an element $x \in S{\frak C}_l(H') $, and choose $x_a \in S{\frak C}_l(E'_a) $
with $\lim_a \ ||\beta_1(x_a) -x||  =0$,
 where $\beta_1(x_a) \in S{\frak C}_l(H')$.
It follows from the above estimate on the diagram that 
\begin{align*}
& \text{\bf pr}: S{\frak C}_l(H') \to S{\frak C}(H') , \\
& \qquad \text{\bf pr} (x) =  \lim_a \ \beta_2( 1 \hat{\otimes}\text{\bf pr}_1 (x_a)) 
\end{align*}
 is uniquely defined and independent of choice of $x_a$.
 
 It is easy to check that this assignment gives a $*$-homomorphism.
 To see that it is isomorphic, we consider a  converse projection,
 from $\bar{\text{pr}}' : E_a'  \cong \bar{E}_a'$.
 A parallel argument gives another $*$-homomorphism
 $\text{\bf pr}' : S{\frak C}(H') \to S{\frak C}_l(H') $,
 and their compositions give the required   identities.
 
 {\bf Step 4:}
 Let us consider $\Gamma$-equivariance.
 Suppose $\Gamma$ acts on $H'$ linearly and  isometrically.
 We claim that 
 $\text{\bf pr}_1:  \bar{E}_a'  \to E_a'$ is $\Gamma$-equivariant.
 
 To see this, notice that $\bar{l} $ and hence $\bar{l}^* \circ l$ are 
 both $\Gamma$-equivariant.  Then we have the equalities
 $$\gamma( \bar{E}_a') = \gamma(\bar{l}^* \circ l(E_a')) = \bar{l}^* \circ l(\gamma E_a')) = \overline{\gamma E_a'}.$$
 Therefore,
  $$ \text{\bf pr}_1( \gamma(\bar{E}_a'))
 = \text{\bf pr}_1( \overline{\gamma E_a'}) =\gamma(E_a')
 = \gamma (\text{\bf pr}_1( \bar{E}_a')).$$
 As the Bott map is also $\Gamma$-equivariant, 
 the process from step $1$ to step $3$ works
 equivariantly.
 \end{proof}

\section{Finite-dimensional approximation}\label{fin-dim-appr}
Let $F : H' \to H$ be a metrically proper map between Hilbert spaces.
Then, there is a proper and increasing 
 function $g :[0, \infty) \to [0, \infty)$  
such that the lower bound 
$$g(||F(m) ||  ) \geq  ||m||$$
holds for all $m \in H'$.  
 Later we analyze a family of maps
 of the form $F_i : B_i' \to W_i$, where 
 $W_i \subset H$ is a finite-dimensional linear subspace and 
 $B_i' \subset W_i' \subset H'$ is 
 a closed and bounded set in a finite-dimensional linear space.

 Let $D_t \subset H$ be a $t$ ball.
 We say that the family of maps
$ \{F_i\}_i$
  is {\em proper}, 
if  there are positive and increasing numbers 
 $r_i,s_i \to \infty$ such that the inclusion holds:
 $$F_i^{-1}(D_{s_i} \cap W_i)  \ \subset \   D_{r_i} \cap W_i' .$$

Denote  $F = l+c$ where $l$ is its linear part and $c$ 
is a non linear  term.

\begin{lem}\label{met-pro}
Let $F = l+c: H' \to H$ be a metrically proper map.
 Suppose  $l$ is surjective and $c$ is compact on each bounded set.
 Then there is 
 a proper and increasing function 
 $f:[0, \infty) \to [0, \infty)$  
such that
the following holds:
  for any $r >0$ and $1 \geq \delta_0 >0$,
   there is a finite-dimensional linear subspace $W'_0 \subset H'$
 such that for any linear subspace $W_0' \subset W' \subset H'$,
 the composed map
 $$\text{ pr } \circ F: D_r  \cap W' \to  W$$
 also satisfies the bound
 $$ f(||\text{ pr } \circ F(m) || ) \geq  ||m||  $$
 for any $m \in D_r \cap W'$, where $W= l(W')$ and $\text{pr}$ is 
 the orthogonal projection to $W$.
 
 Moreover the estimate holds
 $$\sup_{m \in D_r \cap W'} 
 ||F (m)- \text{ pr } \circ F(m)|| \  \leq  \ \delta_0.$$
   \end{lem}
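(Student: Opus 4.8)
The plan is to first establish the ``moreover'' estimate — a finite-dimensional approximation statement for the nonlinear part $c$ — and then deduce the lower bound on $\text{pr}\circ F$ almost formally from it together with the given metric properness. Set $f(t):=g(t+1)$, where $g$ is the proper increasing function furnished by metric properness, so that $g(\|F(m)\|)\ge\|m\|$ for all $m\in H'$; this $f$ is again proper and increasing and depends only on $F$, not on $r$ or $\delta_0$. For $m$ lying in a finite-dimensional subspace $W'$ with $W=l(W')$ and $\text{pr}$ the orthogonal projection onto $W$, we have $l(m)\in W$, hence $\text{pr}\circ F(m)=l(m)+\text{pr}(c(m))$ and therefore $F(m)-\text{pr}\circ F(m)=(1-\text{pr})(c(m))$. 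Once we know this difference has norm $\le\delta_0\le 1$ on $D_r\cap W'$, we get $\|F(m)\|\le\|\text{pr}\circ F(m)\|+1$, and since $g$ is increasing, $\|m\|\le g(\|F(m)\|)\le g(\|\text{pr}\circ F(m)\|+1)=f(\|\text{pr}\circ F(m)\|)$, which is the claimed bound.

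So the real content is the choice of $W'_0$ making $\|(1-\text{pr})(c(m))\|\le\delta_0$ uniformly on $D_r\cap W'$ for all $W'\supset W'_0$. Here I would use that $c$ is compact on bounded sets: the closure $K:=\overline{c(D_r)}\subset H$ is compact. A compact subset of a Hilbert space is ``almost finite-dimensional'': covering $K$ by finitely many $\delta_0$-balls centered at points $k_1,\dots,k_N\in K$ and setting $V_0:=\text{span}\{k_1,\dots,k_N\}$, one gets $\text{dist}(k,V_0)\le\delta_0$ for every $k\in K$, hence $\text{dist}(k,W)\le\delta_0$ for every closed subspace $W\supset V_0$. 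Now invoke surjectivity of $l$: choose $l$-preimages of a basis of $V_0$ and let $W'_0\subset H'$ be their span, a finite-dimensional subspace with $l(W'_0)\supset V_0$. Then for any $W'\supset W'_0$ we have $W=l(W')\supset V_0$, and for $m\in D_r\cap W'$ the vector $c(m)$ lies in $K$, so $\|(1-\text{pr})(c(m))\|=\text{dist}(c(m),W)\le\delta_0$. Combined with the identity of the previous paragraph this yields the moreover estimate, and then the lower bound with $f=g(\cdot+1)$.

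There is no serious obstacle here; the argument is a routine compactness/approximation argument. The only points requiring a little care are: (i) the projection must be taken onto $W=l(W')$ rather than onto a fixed subspace, so that $l(m)$ is unaffected and only the nonlinear part contributes to $F-\text{pr}\circ F$ — this is exactly why surjectivity of $l$ is used, to arrange $V_0\subset l(W'_0)$; and (ii) if one allows $W'$ to be infinite-dimensional, $W=l(W')$ should be read as its closure so that $\text{pr}$ is defined, though in the intended application the $W'$ are finite-dimensional and the issue does not arise. Finally, note that $f$ can be recorded once and for all as $f(t)=g(t+1)$, while $W'_0$ depends only on $r$ (through the compact set $\overline{c(D_r)}$) and on $\delta_0$ (through the size of the finite net), exactly as the statement requires.
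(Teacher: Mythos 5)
Your proof is correct and follows essentially the same route as the paper: cover the compact set $\overline{c(D_r)}$ by finitely many $\delta_0$-balls, lift the (span of the) centers through the surjective $l$ to get $W'_0$, observe that $F-\mathrm{pr}\circ F=(1-\mathrm{pr})\circ c$ on $D_r\cap W'$, and then absorb the resulting $\delta_0\le 1$ error into $g$ via $f(t)=g(t+1)$. You fill in a couple of small points the paper leaves implicit (spanning the net centers so that monotonicity in $W'\supset W'_0$ is transparent, and phrasing the net around $\overline{c(D_r)}$ rather than $c(D_r)$), but the argument is the same.
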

 \begin{proof}
 Let $C  \subset H$ be the closure of the image 
 $c(D_r)$, which is compact. Hence there are finitely many 
  points 
 $w_1, \dots, w_k \in c(D_r)$ such that their $\delta_0$ neighborhoods cover
 $C$. 
 
 Choose $w_i' \in H'$ with  $l(w_i') =w_i$  for $1 \leq  i \leq k$,
 and let $W'_0$ be the linear span of these $w_i'$.

 The restriction
  $\text{ pr } \circ F: D_r  \cap W'_0 \to  W_0$
  satisfies the equality
  $$\text{ pr } \circ F = l +  \text{ pr } \circ c$$
  where $W_0 =l(W_0')$.  
  Then for any $m \in D_r \cap W'_0$, there is some $w_i' $ with $||c(m) -c(w_i')|| \leq \delta_0$, and
  the estimate $||F(m) - \text{ pr } \circ F(m)|| \leq \delta_0$
  holds.
  
  Since $g$ is increasing, we obtain the estimates
  $$ g(||\text{ pr } \circ F(m) ||  +\delta_0) \geq 
   g(|| F(m) || ) \geq  ||m||  .$$ 
   The function $f(x) = g(x+1)$ satisfies the desired property.
   
   For any other linear subspace $W_0' \subset W' \subset H'$, 
   the same property holds for  $\text{ pr } \circ F: D_r  \cap W' \to  W$ with $W =l(W')$.
   \end{proof}

Let $W_i' \subset H'$ and $W_i \subset  H$ be two families of  
finite-dimensional linear subspaces.
Let us say that a family of linear isomorphisms
$$l_i : W_i' \cong W_i$$
is an {\em asymptotic unitary  family} if 
the following conditions hold:

\vspace{2mm}

$(1)$ there exists an asymptotically unitary map $l: H' \cong H$, 

\vspace{2mm}

$(2) $ for each $i_0$,
$\lim_i \ || l-l_i||_{W'_{i_0}}=0$
holds,  where  $l,l_i: W'_{i_0} \to H$, and

\vspace{2mm}

$(3)$  uniform bounds 
$C^{-1} || l || \leq  ||l_i|| \leq C || l ||$ hold on their norms, 
where $C$ is independent of $i$.

\vspace{2mm}

Let us introduce an approximation of $F$ as a family of maps on finite-dimensional 
linear subspaces.
Let $D_{r_i}' \subset H'$ and $D_{s_i} \subset H$
be   $r_i$ and $s_i$ balls respectively.

\begin{defn}\label{fin-appro}
Let $F = l+c: H' \to H$ be a metrically proper map,
where $l$ is its linear part
and $c$ is   a nonlinear term.
Let us say that $F$ is  finitely approximable if there is
an increasing family of finite-dimensional linear subspaces
$$ W'_0   \subset   W'_1 
 \subset   \dots  \subset  W'_i   \subset  \dots  \subset H' $$ 
 and a family of maps
 $F_i = l_i +c_i : W_i' \to W_i$, where $W_i  =l_i(W_i')$,
 such that

$(1)$ the  union 
$\cup_{i \geq 0} W'_i \ \subset H'$ is dense, 

$(2)$ 
 there are two  sequences
 $s_0 <s_1<  \dots \to \infty$ and
 $r_0 <r_1< \dots \to \infty$ 
with $r_i \geq s_i$
 such that
 the embedding
$$F_i^{-1}(D_{s_i} \cap W_i ) \ \subset \ D_{r_i}'  \cap W_i'$$
holds
for all $i$,

$(3)$ for each $i_0$, 
$$\lim_{i \to \infty} \  
\sup_{m \in D_{r_{i_0}}' \cap W_{i_0}'} \ ||F(m) - F_i(m)|| =0,$$

$(4)$ 
$l_i: W_i' \cong W_i$ is an asymptotic unitary family
with respect to $l$.
\end{defn}
Let us also say that $F$ is {\em strongly finitely approximable}  if
it is finitely approximable,
$l_i = l |_{W_i'}$  and $c_i = \text{pr}_i \circ c$, such that
$$\lim_{i \to \infty} \ ||(1 - \text{pr}_i ) \circ c| D_{r_i}' || =0$$
 where 
$\text{pr}_i : H \to W_i$ is the orthogonal projection.

\vspace{3mm}

The following restates Lemma \ref{met-pro}.

\begin{cor}\label{met-prop.cpt}
Let $F = l+c: H' \to H$ be a metrically proper map such that 
   $l$ is asymptotically unitary and $c$ is compact on each bounded set.
Then $F$ is strongly finitely approximable.
\end{cor}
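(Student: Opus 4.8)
The plan is to read off all the data demanded by Definition~\ref{fin-appro} (in its strong form) from a single invocation of Lemma~\ref{met-pro}, the remaining work being a routine diagonal argument.

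First I would note that an asymptotically unitary $l$ is in particular a bounded linear isomorphism, hence surjective, and that $c$ is compact on each bounded set; so Lemma~\ref{met-pro} applies and furnishes a proper increasing $f:[0,\infty)\to[0,\infty)$. Next, fix an exhaustion $V_0'\subset V_1'\subset\cdots$ of $H'$ by finite-dimensional subspaces with dense union, a sequence $\delta_i\downarrow 0$ with $\delta_i\le 1$, an increasing sequence $s_i\uparrow\infty$, and an increasing sequence $r_i\uparrow\infty$ with $r_i\ge s_i$ and $f(s_i)<r_i$ for all $i$ (possible since $f$ is finite-valued). Then build $W_i'\subset H'$ inductively as the finite-dimensional linear span of $W_{i-1}'$, of $V_i'$, of the subspace $W_0'$ that Lemma~\ref{met-pro} produces for the parameters $(r_i,\delta_i)$, and of $l^{-1}(W_i^c)$, where $W_i^c\subset H$ is a finite-dimensional subspace with $\sup_{x\in\overline{c(D_{r_i}')}}\|(1-\mathrm{pr}_{W_i^c})x\|<\delta_i$ (possible since $\overline{c(D_{r_i}')}$ is compact). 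Put $W_i:=l(W_i')$, let $\mathrm{pr}_i:H\to W_i$ be the orthogonal projection, and take $l_i:=l|_{W_i'}$, $c_i:=\mathrm{pr}_i\circ c|_{W_i'}$ and $F_i:=l_i+c_i$; then $F_i=\mathrm{pr}_i\circ F|_{W_i'}$, since $l$ maps $W_i'$ onto $W_i$.

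What then remains is to check (1)--(4) of Definition~\ref{fin-appro} together with the extra estimate for \emph{strong} approximability, and each is a short translation. Condition (1) holds since $\bigcup_i W_i'\supset\bigcup_i V_i'$ is dense. For (4): $l$ is asymptotically unitary by hypothesis; for fixed $i_0$ and $i\ge i_0$ one has $l=l_i$ on $W_{i_0}'\subset W_i'$, so $\|l-l_i\|_{W_{i_0}'}=0$; and $\|l_i\|\le\|l\|$ while $\|l_i\|\ge\|l^{-1}\|^{-1}$, giving the uniform two-sided bound with $C=\|l\|\,\|l^{-1}\|$. For (3): for $i\ge i_0$ and $m\in D_{r_{i_0}}'\cap W_{i_0}'\subset D_{r_i}'\cap W_i'$ one has $F(m)-F_i(m)=(1-\mathrm{pr}_i)c(m)$ (using $l(m)\in W_i$), and the second estimate of Lemma~\ref{met-pro} at $(r_i,\delta_i)$ with $W'=W_i'$ bounds this by $\delta_i$, whence $\sup_{m\in D_{r_{i_0}}'\cap W_{i_0}'}\|F(m)-F_i(m)\|\le\delta_i\to 0$. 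The strong estimate follows the same way, now over the full ball: for $m\in D_{r_i}'$ we have $c(m)\in\overline{c(D_{r_i}')}$, and $W_i\supset W_i^c$ gives $\|(1-\mathrm{pr}_i)c(m)\|\le\|(1-\mathrm{pr}_{W_i^c})c(m)\|<\delta_i\to 0$. Finally (2) is exactly where the properness modulus is used: the first estimate of Lemma~\ref{met-pro} at $(r_i,\delta_i)$ with $W'=W_i'$ gives $f(\|F_i(m)\|)\ge\|m\|$ for $m\in D_{r_i}'\cap W_i'$, so $F_i(m)\in D_{s_i}$ forces $\|m\|\le f(s_i)<r_i$; reading $F_i$ on its relevant bounded domain $D_{r_i}'\cap W_i'$, as with the families $F_i:B_i'\to W_i$ discussed before Lemma~\ref{met-pro}, this yields $F_i^{-1}(D_{s_i}\cap W_i)\subset D_{r_i}'\cap W_i'$.

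I expect the main (and essentially only) obstacle to be the diagonal bookkeeping: interleaving the three growing requirements -- ``$\bigcup_i W_i'$ dense'', ``$W_i'$ contains the level-$i$ subspace of Lemma~\ref{met-pro}'', ``$\mathrm{pr}_i$ approximates $\overline{c(D_{r_i}')}$ to within $\delta_i$'' -- into one increasing exhaustion, while keeping the parameters $r_i\ge s_i$, $f(s_i)<r_i$ consistent, and being careful that condition~(2) is correctly read as a statement about the $F_i$ on their bounded domains. Everything else is a mechanical unwinding of Definition~\ref{fin-appro} against the two estimates supplied by Lemma~\ref{met-pro}, which is precisely why the corollary is genuinely just a restatement of that lemma.
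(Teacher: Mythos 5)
Your proof is correct and follows the same route as the paper, which offers no argument beyond the single line ``The following restates Lemma~\ref{met-pro}''; you have simply carried out the diagonal construction that the author leaves implicit, feeding Lemma~\ref{met-pro} the parameters $(r_i,\delta_i)$ at each stage, adjoining $l^{-1}(W_i^c)$ for the strong estimate, and reading condition~(2) over the bounded domain $D_{r_i}'\cap W_i'$ as in the discussion preceding that lemma. Your explicit flag about this bounded-domain reading of condition~(2) is the right call, since $\mathrm{pr}_i\circ F$ need not be proper on all of $W_i'$ under the stated hypotheses.
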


\vspace{3mm}

Suppose both $H'$ and $H$ admit linear isometric  
actions by a group  $\Gamma$ and 
assume that both $F$ and $l$ are $\Gamma$-equivariant where $F = l+c$.
Then we say that 
$F$ is  $\Gamma$-{\em  finitely approximable}, if moreover 
the above family $\{W'_i\}_i$  satisfies that  the union
 $$ \cup_i \  \{ \ \gamma(W_i') \cap W_i'  \ \} \ \subset \ H'$$
 is dense
for any $\gamma \in \Gamma$.

Note that 
the above family $\{F_i\}_i$  satisfies
 convergence  for any  $\gamma \in \Gamma$
$$\lim_{i \to \infty}  \  \sup_{m 
}  \ ||\gamma F_i( m) - F_i( \gamma m)|| =0$$
where $m \in D_{r_{i_0}}' \cap
 W'_{r_{i_0}} \cap \gamma^{-1}(W'_{r_{i_0}})$
because  the estimate
\begin{align*}
 ||\gamma F_i( m) - F_i( \gamma m)|| & \leq
  ||\gamma F( m) - \gamma F_i( m)||+
   ||\gamma F( m) - F_i( \gamma m)|| \\
   & =  ||F( m) - F_i( m)||+
   ||F( \gamma m) - F_i( \gamma m)||
   \end{align*}
holds.

Let us take $\gamma \in \Gamma$, 
and consider the $\gamma$ shift of the finite approximation data
$$\gamma(W_i'), \quad
\gamma^*(F_i), \quad  \gamma^*(l_i).$$
It is clear that the above shift 
gives another finite approximation of $F$.

\vspace{3mm}

\section{Induced Clifford $C^*$-algebra}
Let $F =l+c: H' \to H$ be a map. 
We aim here is to construct an ``induced'' 
Clifford $C^*$-algebra $S{\frak C}_F(H')$.

\subsection{Model case}\label{Model}
Let us start with a model case  that consists of 
a proper and nonlinear map
$$F =l +c: E' \to E$$ 
between finite-dimensional Euclidean spaces,
where $l$ is a linear isomorphism. 
Consider a  $*$-homomorphism
$$F^*: S {\frak C}(E)
 \to S {\frak C}(E')  =  C_0(\mathbb{R}) \hat{\otimes}
 C_0(E', Cl(E'))$$
 defined by
$F^*(  f \hat{\otimes} u)(v') :=  f \hat{\otimes} \bar{l}^{-1}(u(F(v'))$,
and denote its image by
$$S {\frak C}_F(E')  = F^*(S {\frak C}(E))$$
which is a $C^*$-subalgebra  in $S {\frak C}(E')$,
whose norm 
is denoted  by $||\quad ||_{S {\frak C}_F}$.

The induced map
$$C_F \equiv \bar{l}^{-1} \circ F: E' \to E'  \hookrightarrow Cl(E')$$ 
 is   called the {\em  induced Clifford operator}. 
  We use it to
introduce a  $*$-homomorphism
$$\beta_F: C_0({\mathbb R}) \to S {\frak C}_F(E') $$
 defined by
$\beta_F: f \to f(X \hat{\otimes} 1+ 1 \hat{\otimes} C_F)$
by functional calculus.

Now suppose a Hilbert space $H'$ is spanned by 
an infinite family of finite-dimensional Euclidean planes as
$$E'_1 \oplus E'_2 \oplus \dots$$
and assume  there is a family of proper maps
$$F_i = l_i +c_i : E'_i \to E_i$$
which extends to a map
$$F =(F_1,F_2, \dots)  = l+ c: H' \to H$$
where $H$ is spanned by $E_1\oplus E_2 \oplus \dots$.
Assume  $l =(l_1,l_2, \dots): H' \cong H$ 
is asymptotically unitary.

\begin{lem}
Let $F =(F_1,F_2)$ be diagonal as above. Then
\[
 {\frak C}_F
(E'_1\oplus E'_2 ) 
\cong 
{\frak C}_{F_1}
(E'_1) \hat{\otimes}
 {\frak C}_{F_2} 
(E'_2).
\]
\end{lem}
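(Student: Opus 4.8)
The plan is to unwind both sides as concrete $C^*$-subalgebras of the same ambient algebra and exhibit the claimed isomorphism as the restriction of the standard tensor decomposition for ordinary Clifford algebras. First I would record the ingredients: by definition ${\frak C}_F(E') = F^*({\frak C}(E))$ sits inside ${\frak C}(E') = C_0(\mathbb R)\hat\otimes C_0(E',Cl(E'))$, where $F = (F_1,F_2)$, $E = E_1\oplus E_2$, $E' = E_1'\oplus E_2'$, and $\bar l = \bar l_1 \oplus \bar l_2$ since $l$ is diagonal (the polar decomposition of a block-diagonal operator is block-diagonal). The standard splitting gives $C_0(E,Cl(E)) \cong C_0(E_1,Cl(E_1))\hat\otimes C_0(E_2,Cl(E_2))$ via $(v_1,v_2)\mapsto$ evaluation, using $Cl(E_1\oplus E_2) \cong Cl(E_1)\hat\otimes Cl(E_2)$, and likewise for $E'$. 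Under these identifications $F^* = F_1^* \hat\otimes F_2^*$ because $F(v_1',v_2') = (F_1(v_1'),F_2(v_2'))$, $\bar l^{-1} = \bar l_1^{-1}\oplus \bar l_2^{-1}$ acts diagonally on $Cl(E') \cong Cl(E_1')\hat\otimes Cl(E_2')$, and the $C_0(\mathbb R)$ factor is untouched.

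The key steps in order are: (i) fix the explicit $*$-isomorphism $\Phi : {\frak C}(E_1'\oplus E_2') \xrightarrow{\ \cong\ } {\frak C}_{F_1}\text{-ambient}\ \hat\otimes\ \ldots$, i.e.\ the honest splitting of ${\frak C}(E')$ and ${\frak C}(E)$; (ii) check that $\Phi \circ F^* = (F_1^*\hat\otimes F_2^*)\circ(\text{splitting of }{\frak C}(E))$ on elementary tensors $f\hat\otimes u_1\hat\otimes u_2$, which is a direct computation of where a point $(v_1',v_2')$ gets sent; (iii) conclude that $\Phi$ carries the image $F^*({\frak C}(E))$ onto the image $(F_1^*\hat\otimes F_2^*)({\frak C}(E_1)\hat\otimes{\frak C}(E_2)) = F_1^*({\frak C}(E_1))\hat\otimes F_2^*({\frak C}(E_2)) = {\frak C}_{F_1}(E_1')\hat\otimes{\frak C}_{F_2}(E_2')$; (iv) observe that $\Phi$ restricted to this subalgebra is isometric for the subalgebra norms since it is the restriction of a $*$-isomorphism of the ambient algebras, so no norm-completion subtleties arise. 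The identity on $C_0(\mathbb R)$ factors through uniformly, so the $S$ in front of everything (the $C_0(\mathbb R)\hat\otimes -$) just rides along; hence the displayed statement for ${\frak C}_F$ (and its suspended version $S{\frak C}_F$) follows at once.

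The point requiring care — the main obstacle, though a mild one — is that $F^*({\frak C}(E))$ need not be all of ${\frak C}(E')$, so one is comparing \emph{subalgebras}, and one must be sure the minimal/spatial tensor product of the two image subalgebras is exactly the image of the tensor product, i.e.\ that $F_1^*({\frak C}(E_1))\hat\otimes F_2^*({\frak C}(E_2))$ — computed as a $C^*$-subalgebra of ${\frak C}(E_1')\hat\otimes{\frak C}(E_2')$ — coincides with $(F_1^*\hat\otimes F_2^*)({\frak C}(E_1)\hat\otimes{\frak C}(E_2))$. This is where one uses that for the graded (spatial) tensor product the image of a tensor product of $*$-homomorphisms is the closed span of elementary tensors from the two images, which is exactly the graded tensor product of the two image algebras; since all algebras here are nuclear ($C_0$ of a space with values in a finite-dimensional algebra), there is no ambiguity of $C^*$-norm. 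The grading bookkeeping — ensuring $Cl(E_1\oplus E_2)\cong Cl(E_1)\hat\otimes Cl(E_2)$ is used as a graded isomorphism and that the Bott-type generators match up — is routine but should be stated. Everything else is a matter of chasing an elementary tensor through the definitions.
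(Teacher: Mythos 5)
Your argument is correct and follows the same route as the paper: identify ${\frak C}_F(E'_1\oplus E'_2)$ with the image $F^*({\frak C}(E_1\oplus E_2))$, split $F^*=F_1^*\hat\otimes F_2^*$ using $\bar l=\bar l_1\oplus\bar l_2$ and $Cl(E_1\oplus E_2)\cong Cl(E_1)\hat\otimes Cl(E_2)$, and then identify the image of the tensor-product homomorphism with the graded tensor product of the images. You spell out (via nuclearity and grading bookkeeping) the ``image of a tensor product equals tensor product of images'' step, which the paper's one-line chain of isomorphisms leaves implicit, so your write-up is a slightly more careful version of the same proof.
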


\begin{proof}
By definition
$
 {\frak C}_{F_i}
(E'_i) 
=F_i^*( {\frak C}
(E'_i))
$
holds for $i =1,2$.
Hence we have the isomorphisms

\begin{align*}
 {\frak C}_F 
(E'_1\oplus E'_2 ) 
&  \cong 
F^* ( {\frak C}(E'_1\oplus E'_2 ) ) 
  \cong 
(F_1 \hat{\otimes} F_2)^*
 {\frak C}
(E'_1) \hat{\otimes}
{\frak C}
(E'_2)  \\
& \cong
F_1^*( {\frak C}
(E'_1)) \hat{\otimes}
F_2^*( {\frak C}
(E'_2) ) \\
& \cong 
  {\frak C}_{F_1}
(E'_1) \hat{\otimes}
 {\frak C}_{F_2} 
(E'_2).
\end{align*}
\end{proof}

 Then the induced Bott map is given by
\begin{align*}
\beta_{F_{i+1}}  & : S {\frak C}_F
(E'_1\oplus \dots \oplus E'_i ) \to \\
& S{\frak C}_F(E'_{i+1}) \hat{\otimes}  {\frak C}_F
(E'_1\oplus \dots \oplus E'_i) 
\cong  S{\frak C}_F(E'_1 \oplus  \dots \oplus E'_{i+1}) 
\end{align*}
by use of 
 $C_{F_{i+1}}$.

  More generally, one can induce
 $$
\beta_{i, j}   : S {\frak C}_F
(E'_1\oplus \dots \oplus E'_i ) \to 
  S{\frak C}_F(E'_1 \oplus  \dots \oplus E'_{j}) 
$$
by use of  a canonical extension
 $$C_{(F_{i+1} , \dots , F_j)}:E'_{i+1} \oplus \dots \oplus E'_j \to 
 E'_{i+1}  \dots \oplus E'_j \subset Cl(E'_{i+1} \dots \oplus E'_j).$$
 Let $u \in S {\frak C}_F(E'_1\oplus \dots \oplus E'_i ) $ for some $i$.
Then the limit
$$||u|| \equiv \lim_{j \to \infty}
 ||\beta_{i,j}(u)||_{S {\frak C}_F}$$
exists, which   gives a norm on 
$S {\frak C}_F(E_1' \oplus E_2' \oplus \dots)$.
 Then the  direct limit $C^*$ algebra is given by
$$S{\frak C}_F(H') = \lim_j \  S {\frak C}_F(E'_1\oplus \dots \oplus E'_j )
$$
whose norm is given as above.

Notice that $S{\frak C}_F(H')$ is no longer a $C^*$-subalgebra of $S{\frak C}(H')$.

\begin{lem}\label{F=l}
In the case when
   $c_i \equiv 0$ and hence  $F_i =l_i$
    for all $i$,
  the induced   Clifford $C^*$-algebra admits a
canonical $*$-isomorphism
$$S{\frak C}_F(H')  \ \cong \ S{\frak C}(H').$$
\end{lem}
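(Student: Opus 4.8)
The plan is to reduce to the case $F=l$ treated in the variant construction of Section \ref{var-bott}, and then invoke Proposition \ref{iso}. When $c_i\equiv 0$ the induced Clifford operator $C_{F_i}=\bar l_i^{-1}\circ l_i$ coincides with $\bar l_i^{*}\circ l_i$ (up to the identification $l(W_i')=W_i$), which is precisely the induced Clifford operator $C_l$ appearing in Definition \ref{SC_l}. Thus the finite-stage algebras $S{\frak C}_F(E_1'\oplus\cdots\oplus E_j')$ are literally the algebras $S{\frak C}_l(E_b')$ with $E_b'=E_1'\oplus\cdots\oplus E_j'$, and the induced Bott maps $\beta_{i,j}$ are built from $C_l$ in exactly the way $\beta_{ba}$ is built in Lemma \ref{l-bott}. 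So the first step is to check that the two direct systems agree: compare $\beta_{i,j}$ with $\beta_{ba}=I_{ba}\circ\beta_l$ on the cofinal family of subspaces of the form $E_1'\oplus\cdots\oplus E_j'$, and observe that this family is cofinal among all finite-dimensional subspaces of $H'$ because $\cup_j E_j'$ is dense.

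The second step is to identify the norms. On $S{\frak C}_F(H')$ the norm is $\lim_{j}\|\beta_{i,j}(u)\|_{S{\frak C}_F}$, where $\|\cdot\|_{S{\frak C}_F}$ is the subalgebra norm inherited from $S{\frak C}(E_b')$; on $S{\frak C}_l(H')$ it is $\lim_b\|\beta_{ba}(u)\|$ in the sense of $(*)$. Since in the diagonal $c\equiv 0$ case $S{\frak C}_F(E_b')=F^*(S{\frak C}(E_b))\subset S{\frak C}(E_b')$ and $F^*$ is injective there (because $\bar l^{-1}$ is a unitary and $F$ is a proper homeomorphism onto its image), the subalgebra norm on $S{\frak C}_F(E_b')$ is exactly the norm of $S{\frak C}_l(E_b')=C_0(\mathbb R)\hat\otimes C_0(E_b',Cl(\bar E_b'))$ transported through the homeomorphism $F=l:E_b'\to E_b$ and the Clifford identification $\bar l^{*}\circ l$. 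Hence the two limiting norms coincide, and the identity map on the generating elements extends to an isometric $*$-isomorphism of the completions.

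Concretely I would phrase the $*$-isomorphism as follows: for a generator $\bar v_a$ with $v_a\in S{\frak C}_F(E_a')=F^*(S{\frak C}(E_a))$, write $v_a=F^*(w_a)$ with $w_a\in S{\frak C}(E_a)$, so that via $l:E_a'\cong E_a$ we may regard $v_a$ as an element of $C_0(\mathbb R)\hat\otimes C_0(E_a',Cl(\bar E_a'))=S{\frak C}_l(E_a')$, and send $\bar v_a$ to the corresponding class in $S{\frak C}_l(H')$. Step one guarantees this is compatible with the connecting maps (so it is well-defined on equivalence classes and a $*$-homomorphism), step two guarantees it is isometric, and surjectivity is clear since every generator of $S{\frak C}_l(H')$ arises this way. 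Composing with the $*$-isomorphism $S{\frak C}_l(H')\cong S{\frak C}(H')$ of Proposition \ref{iso} gives the claimed canonical $*$-isomorphism $S{\frak C}_F(H')\cong S{\frak C}(H')$; $\Gamma$-equivariance, if needed, is inherited from the $\Gamma$-equivariance statement in Proposition \ref{iso} together with the equivariance of $l$.

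The main obstacle I anticipate is the bookkeeping in step one: one must make sure that the "canonical extension" operator $C_{(F_{i+1},\dots,F_j)}$ used to define $\beta_{i,j}$ really matches, after the projections $\hat{\text{\bf pr}}$ of Lemma \ref{l-bott}, the operator $C_l$ restricted to the relevant orthogonal complement, and that the isomorphism $I_{ba}$ used there is the same as the tensor decomposition ${\frak C}_F(E_1'\oplus E_2')\cong{\frak C}_{F_1}(E_1')\hat\otimes{\frak C}_{F_2}(E_2')$ of the preceding lemma — i.e. that no spurious discrepancy in the Clifford-algebra identifications creeps in as the dimension grows. This is exactly the point addressed by the asymptotic-unitarity estimates of Lemma \ref{dist} and Lemma \ref{l-bott}, so the resolution is to run those estimates once more in the present notation rather than to prove anything genuinely new.
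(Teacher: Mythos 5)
Your proposal is correct and follows essentially the same route as the paper: reduce to the identification $S{\frak C}_F(H') = S{\frak C}_l(H')$ in the case $c\equiv 0$ and then invoke Proposition \ref{iso}. The paper's proof states this coincidence in a single line and cites Proposition \ref{iso}, whereas you spell out the verification by matching the direct systems and norms; the underlying argument is the same.
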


\begin{proof}
This 
follows from Proposition \ref{iso} 
with the coincidence
$$S{\frak C}_F(H')  \ = \ S{\frak C}_l(H')$$
where the right-hand side is given in Definition \ref{SC_l}.
\end{proof}

\subsection{Induced Clifford $C^*$-algebra}\label{ind.Cli.alg.}
Assume that 
$F = l+c: H' \to H$ is 
 finitely approximable as in Definition \ref{fin-appro}
 with respect to the data
$ W'_0   
 \subset   \dots  \subset  W'_i   \subset  \dots  \subset H' $
 with open disks
 $D_{r_i}' \subset W_i'$ and  $D_{s_i} \subset W_i$, and 
$F_i =l_i +c_i: W_i' \to W_i$.

 Let $S_r  =C_0(-r,r) \subset S$ be the set of   continuous functions on $(-r,r)$
 vanishing at infinity, and 
 consider the following $C^*$-subalgebras
$$  S_{r_i} \hat{\otimes} C_0(D'_{r_i}, Cl(W'_i)) 
\equiv S_{r_i} {\frak C}(D'_{r_i}).$$
Since the inclusion
$F_i^{-1}(D_{s_i}) \subset D_{r_i}'$ holds, 
it induces a $*$-homomorphism
$$F_i^*: S_{s_i} {\frak C}(D_{s_i})
\to S_{r_i} {\frak C}(D'_{r_i})$$
given by
$ F_i^*(h)(v') := \bar{l}_i^{-1}(h(F_i(v')))$.
Denote its image by
$$S_{r_i} {\frak C}_{F_i}(D'_{r_i}) :=
F_i^*(S_{s_i} {\frak C}(D_{s_i}))$$
which is a $C^*$-subalgebra with the norm
$||\quad||_{S_{r_i} {\frak C}_{F_i}}$.

Let us  
consider a family of elements
$$\alpha_i \in S_{r_i}{\frak C}_{F_i}(D_{r_i}' ), \qquad  i \geq i_0$$
 for some $i_0$.
Let us say that the family is $F$-{\em compatible} if
there is an element $u_{i_0} \in S_{s_{i_0}}{\frak C}(D_{s_{i_0}} )$
such that
$$\alpha_i = F_i^*(u_i) \in S_{r_i}{\frak C}_{F_i}(D_{r_i}' )$$
holds for any $i \geq i_0$,
where $u_i = \beta(u_{i_0} ) \in S_{s_{i}}{\frak C}(D_{s_{i}} )
$ with the standard Bott map $\beta$.

\begin{rem}\label{beta_F-formula}
Consider the induced Clifford operator
$$C_{F_i} \equiv \bar{l}_i^{-1} \circ F_i: D_{r_i}' \to  W_i \hookrightarrow Cl(W_i')$$ 
  and
introduce a  $*$-homomorphism
$$\beta_{F_i}: S_{r_i} \to S_{r_i} {\frak C}(D'_{r_i})$$
 defined by
$\beta_{F_i}: f \to f(X \hat{\otimes} 1+ 1 \hat{\otimes} C_{F_i})$
by functional calculus.

Then, 
$F_i^* (\beta(f ) ) = \beta_{F_i}(f)
$,  for all $f \in S_{r_i}$
\end{rem}

For an element 
$ \alpha_i \in S_{r_i}{\frak C}_{F_i}(D_{r_i}' )$ and for  $i_0 \leq i$,
we denote its restriction
$$\alpha_i| D_{r_{i_0}}' \in S_{r_i} \hat{\otimes} C_b(D_{r_{i_0}}')
\hat{\otimes} Cl(W'_i).$$
Note that the norms satisfy the inequality
$$||\alpha_i||_{S_{r_i}{\frak C}_{F_i}(D_{r_i}' )}
 \  \geq \  ||\alpha_i|D_{r_{i_0}}' ||$$
where the right-hand side is the restriction norm.

\vspace{2mm}

 For an $F$-compatible  sequence 
 $\alpha = \{\alpha_i\}_{i \geq i_0}$, 
  the limit
$$|| \ \{\alpha_i\}_i \ || :=  \lim_{j \to \infty}
\lim_{i  \to \infty} \
 ||\alpha_i |D_{r_{j}}'||$$
exists because
  both $F_i$ and $l_i$ converge weakly (see Definition \ref{fin-appro}).
 Moreover
both $F_i^*$ and $\beta$ are
$*$-homomorphisms between $C^*$-algebras and so
are both norm-decreasing.

\begin{defn} \label{induced Clifford}
Let $F$ be finitely approximable.
The induced Clifford $C^*$-algebra is given by
$$S{\frak C}_F(H') = \overline{  \{ \ \{ \alpha_i\}_i ;  \ 
F\text{-compatible sequences } \} }
$$
which is obtained by the norm closure of all $F$-compatible sequences,
where the norm is  the above one.
\end{defn}

\begin{lem}
$(1)$  In the model case, 
$S{\frak C}_F(H')$ 
 coincides with $S{\frak C}_F(H')$
  in subsection \ref{Model} 
 
 $(2)$  When $F=l$ is asymptotically unitary,
 there is a natural $*$-isomorphism
$$\Phi: S{\frak C}_F(H') \cong S{\frak C}_l(H')$$
where the right-hand side is given  in Definition \ref{SC_l}.

$(3)$
Suppose $F$ is $\Gamma$-finitely approximable. Then 
 $S{\frak C}_F(H')$ is independent of choice of 
$\Gamma$-finite approximations.
\end{lem}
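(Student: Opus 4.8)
The plan is to treat the three assertions in turn, in each case building an explicit identification between the two descriptions of $S{\frak C}_F(H')$ and checking it respects the norms. For (1), the model case, I would observe that when $H' = E_1' \oplus E_2' \oplus \cdots$ and $F = (F_1, F_2, \dots)$ is diagonal, one may choose the finite-dimensional exhaustion $W_j' = E_1' \oplus \cdots \oplus E_j'$ and the maps $F_j = F_1 \oplus \cdots \oplus F_j$; properness of each $F_i$ furnishes the sequences $r_i, s_i$, and conditions (1)--(4) of Definition~\ref{fin-appro} hold trivially because $F$ is literally built from the $F_i$. An $F$-compatible sequence $\{\alpha_i\}$ is then exactly the data of an element $u_{i_0} \in S{\frak C}(D_{s_{i_0}})$ together with its Bott images, and by Remark~\ref{beta_F-formula} the assignment $u_{i_0} \mapsto F_{i_0}^*(u_{i_0})$ intertwines $\beta$ with $\beta_{F_{i_0}}$. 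Thus the compatible sequences are precisely the images of the $\beta_{i,j}$-system defining $S{\frak C}_F(H')$ in Subsection~\ref{Model}, and the two limit norms agree because both are computed as $\lim_j \lim_i \| \beta_{i,j}(u)\|_{S{\frak C}_F}$; passing to norm closures gives the $*$-isomorphism.

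For (2), when $F = l$ is asymptotically unitary, I would note that $F_i^* = \bar l_i^{-1}\circ(\,\cdot\,)\circ l_i$ and that the induced Clifford operator $C_{F_i} = \bar l_i^{-1}\circ l_i$ is exactly the induced Clifford operator $C_l$ restricted to $W_i'$ (up to the discrepancy $\|l_i - l\|_{W_{i_0}'} \to 0$ built into the asymptotic unitary family). Hence $S_{r_i}{\frak C}_{F_i}(D_{r_i}')$ is the algebra $C_0((-r_i,r_i))\hat\otimes C_0(D_{r_i}', Cl(\bar{W}_i'))$ appearing in the definition of $S{\frak C}_l(H')$ in Definition~\ref{SC_l}, and the structure maps $\beta_{F_i}$ of the $F$-compatible system match the maps $\beta_{ba} = I_{ba}\circ\beta_l$ up to $O(\epsilon)$ via the polar-decomposition unitaries $\hat{\text{\bf pr}}$ of Lemma~\ref{l-bott}. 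The map $\Phi$ is then defined on a compatible sequence by sending it to the corresponding element of the asymptotically commutative direct limit; well-definedness and the $*$-homomorphism property follow because the defect terms are controlled by Lemma~\ref{dist}, and surjectivity is immediate since every generator of $S{\frak C}_l(H')$ arises from some $\beta_l(f)$ on a finite-dimensional subspace. That the two norms coincide follows from the inequality $\|\alpha_i\|_{S_{r_i}{\frak C}_{F_i}} \geq \|\alpha_i | D_{r_{i_0}}'\|$ together with the fact that $F_i^*$ is norm-decreasing, so no norm is lost in the limit.

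For (3), $\Gamma$-independence, I would take two $\Gamma$-finite approximations of $F$, with data $\{W_i'\}$, $\{F_i\}$ and $\{\tilde W_i'\}$, $\{\tilde F_i\}$, and interleave them: since both unions $\cup_i W_i'$ and $\cup_i \tilde W_i'$ are dense and, by hypothesis, $\cup_i(\gamma W_i' \cap W_i')$ is dense for each $\gamma$, one can pass to a common refinement (choosing a single exhaustion $U_k'$ and approximating maps $G_k$ adapted to both) and check that the resulting compatible-sequence algebra is canonically $*$-isomorphic to each of the two original ones, compatibly with the $\Gamma$-action — the action is the one induced by $\gamma(f\hat\otimes u)(v) = f\hat\otimes\gamma(u(\gamma^{-1}v))$, which under $F^*$ uses that $\bar l$, and hence $F$, are $\Gamma$-equivariant up to the vanishing defects $\|\gamma F_i - F_i\gamma\| \to 0$ recorded after Definition~\ref{fin-appro}. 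The main obstacle is precisely this last point: controlling simultaneously the two sources of error — the non-$\Gamma$-invariance of each finite-dimensional $W_i'$ and the failure of $F_i$ to be exactly equivariant — and showing these errors are swept into the zero ideal when one passes to the limit norm $\lim_j\lim_i\|\cdot|D_{r_j}'\|$. I expect this to reduce, after an $\epsilon/3$ argument, to the convergence estimates already displayed in Section~\ref{fin-dim-appr}, but it is the step requiring the most care.
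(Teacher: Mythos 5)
For item (1), your approach matches the paper's exactly: choose $W_j' = E_1' \oplus \cdots \oplus E_j'$, use Remark~\ref{beta_F-formula} to identify $F_i^*\circ\beta$ with $\beta_{F_i}$, and appeal to the norm-closure/density argument of Lemma~\ref{cpt-supp}. Item (2) is also in the same spirit, but you are glossing over the two parts where the real work lies. The paper does not merely say ``the structure maps match up to $O(\epsilon)$''; it writes down the map explicitly,
$\Phi(\{\alpha_i\}_i) = \lim_{i\to\infty}\beta_l\bigl(l^*(\beta(\text{\bf pr}_{i_0,i}^*(u_{i_0})))\bigr)$,
proves it is norm-preserving (hence injective once extended), and then exhibits a \emph{separate} converse construction $\delta\mapsto\{\alpha_i\}_i$ to show the image is dense, and hence that $\Phi$ is onto. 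Your remark that ``surjectivity is immediate since every generator of $S{\frak C}_l(H')$ arises from some $\beta_l(f)$'' skips this: knowing the generators of the target does not by itself produce preimages under $\Phi$; one must check that for such a generator one can build a compatible sequence on the $S{\frak C}_F$ side whose $\Phi$-image comes out right, which is precisely the converse step the paper carries out.

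For item (3) there is a genuine gap. You propose to build a common refinement, ``choosing a single exhaustion $U_k'$ and approximating maps $G_k$ adapted to both.'' This is a different route from the paper's, and it is not clear it works: to make $(U_k', G_k)$ into an admissible $\Gamma$-finite approximation you would have to produce new approximating maps $G_k$ satisfying all four conditions of Definition~\ref{fin-appro} plus the density of $\cup_k(\gamma U_k'\cap U_k')$, and nothing guarantees that interleaving two given approximations yields such data (in particular, condition (2), the proper inclusion $G_k^{-1}(D_{s_k})\subset D_{r_k}'$, is a property of the map, not just the subspaces, and the two given $F_i$-families need not be compatible in this respect on a merged exhaustion). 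The paper avoids this entirely: it first proves that passing to a subsequence $\{F_{j_i}\}$ of a single approximation leaves the limit norm, and hence $S{\frak C}_F(H')$, unchanged; then, given two $\Gamma$-finite approximations $\{F_i^1\}$ and $\{F_i^2\}$, it chooses subindices $j_i$ with $d'(W_{j_i,1}, W_{i,2})\to 0$ and compares $\alpha_{j_i} = (F_{j_i}^1)^*(u_{j_i})$ directly with $\alpha_i' = (F_i^2)^*(u_i)$ via the restriction norms $\|\cdot|D_{r_{j_{i_0}}}'\|$, using the convergence $(*_1)$, $(*_2)$ rather than manufacturing a third approximation. Your closing paragraph correctly identifies the source of error but explicitly defers the estimate (``it is the step requiring the most care''), so as written item (3) is not established.
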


\begin{proof}
One can choose $W_i' = E_1 \oplus \dots \oplus E_i'$.
Then $(1)$ 
 follows from the equality
$$F_{i}^* \circ \beta  = \beta_{F_{i}}:
S_{s_i} \to S_{r_i}{\frak C}_{F_i}(D'_{r_i})$$
by Remark \ref{beta_F-formula},
with Lemma \ref{cpt-supp}.

Let us consider $(2)$, and set $F =l$.
Recall the  Bott map which is given above of the Definition   \ref{SC_l},
and denote it as
$$\beta_l : S {\frak C}_l(W_i') \to 
S {\frak C}_l(H').$$

Let $\{l_i\}_i$ be asymptotically unitary, and denote
$l_i (W_i)= \tilde{W}_i$ and $l(W_i) =W_i$.
For each $i_0$ and $\epsilon >0$, there is some $i_0' >> i_0$ such that 
$$||\text{pr}_{i_0,i} - \text{ id } || < \epsilon$$
holds for any $i \geq i_0'$,
 where $\text{pr}_{i_0,i}: W_{i_0} \to l_i(W_{i_0}) $
is the orthogonal projection.
Let $\text{\bf pr}_{i_0,i}: W_{i_0} \cong l_i(W_{i_0})$
be the unitary of the polar decomposition.

Take an element $\{\alpha_i \}_i \in  S{\frak C}_F(H')$, with 
$\alpha_i = l_i^*(u_i)$ and  
$u_i = \beta(u_{i_0}) \in S{\frak C}(\tilde{W}_i)$.
Note that the restriction
$\beta(u_{i_0})|_{W_{i_0}} = u_{i_0}$ holds.
Then  by the condition of asymptotic unitarity, 
the restriction of their difference satisfies the estimate
$$|| l^*(\beta(\text{\bf pr}_{i_0,i}^*(u_{i_0}))) 
- \alpha_i||_{W'_{i_0}} < \epsilon
$$
where $\beta(\text{\bf pr}_{i_0,i}^*(u_{i_0})) \in 
S{\frak C}(W_{i})$.
Then we set
$$\Phi(\{\alpha_i \}_i) = \lim_{i \to \infty} \ 
 \beta_l(l^*(\beta(\text{\bf pr}_{i_0,i}^*(u_{i_0}))) )
.$$
$\Phi$ is norm-preserving,  so it extends to an injective
 $*$-homomorphism from $ S{\frak C}_F(H')$.

Let us verify that it is surjective.
One can follow in a converse way to the above.
Take an element 
$\delta = \beta_l(\delta_{i_0}) \in S{\frak C}_l(H')$ with 
$\delta_{i_0} \in S{\frak C}_l(W'_{i_0})$,
and set $\delta_i = l^*(\beta(\delta_{i_0})) $.
Let us set
$w_{i_0} = (l^*)^{-1}(\delta_{i_0}) \in S{\frak C}(W_{i_0})$ by 
$v \to \bar{l}(\delta_{i_0}(l^{-1}(v)))$. Then we set
$$u_{i_0} = (\text{\bf pr}_{i_0,i}^{-1})^*(w_{i_0} )\in 
S{\frak C}(\tilde{W}_{i_0})$$
 and
$\alpha_i = l_i^*(\beta(u_{i_0})) \in S{\frak C}_{l_i}(W_i')$.
The restriction of their difference satisfies the estimate
$$|| l^*(\beta(w_{i_0}))
- \alpha_i||_{W'_{i_0}} < \epsilon
$$
where $\beta(w_{i_0}) \in S{\frak C}(W_{i})$.
The estimate
$|| l^*(\beta(w_{i_0})) - \delta_i ||_{W'_{i_0}} < \epsilon$ 
is satisfied because
 $|| l^*(w_{i_0}) - \delta_{i_0}|| < \epsilon$ holds.
This implies that $\Phi(\{\alpha_i\}_i) = \delta \in S{\frak C}_l(H')$.

Hence $\Phi$  is an isometric $*$-homomorphism with dense image.
This implies that it is surjective.

Let us verify the last property $(3)$.
 Choose any subindices $j_i \geq i$ for $i =1,2, \dots$,
 and 
 consider the sub-approximation 
 given by the data $\{F_{j_i} \}_i$.
 If we replace the original data $\{F_i\}_i$
 by this subdata,  still we obtain the same $C^*$-algebra
 $S{\frak C}_F(H')$ as their norms coincide as follows:
 $$\lim_{i  \to \infty} ||\alpha_i |D_{r_{j_{i_0}}}'||
  =
 \lim_{i  \to \infty} ||\alpha_{j_i}|D_{r_{j_{i_0}}}'||.$$
Let us take two $\Gamma$-finite approximations
and denote them by
$F_i^l: (D_{s_i}')^l \to W_{i,l}$
for $l =1,2$.

Take an $F$-compatible  sequence 
 $\alpha = \{\alpha_i\}_{i \geq i_0}$ with respect to 
 $F_i^1: (D_{r_i}')^1 \to W_{i,1}$,
 where $\alpha_i = (F_i^1)^*(u_i)$
 and $u_i = \beta(u_{i_0}) \in S_{s_i}{\frak C}(D^1_{s_i})$. 
Let us take subindices  $j_i \geq i$ for $i =1,2, \dots$
so that $\lim_{i \to \infty} \ d'(W_{j_i,1} , W_{i,2})=0$ holds
(see \ref{auo}).

Let us set $\alpha_i' = (F_i^2)^*(u_i)$.
Then it follows from the definition of $F$-compatible  sequence that 
the convergence
$$\lim_{i  \to \infty} ||\alpha_{j_i} |D_{r_{j_{i_0}}}' - 
  \alpha'_{i}|D_{r_{j_{i_0}}}'|| =0$$
holds.
Combining  this result with the above,  we obtain the desired conclusion.
\end{proof}

\begin{lem}
If $F$ is $\Gamma$-finitely approximable, then 
there is a canonical $\Gamma$-action on 
$S{\frak C}_F(H')$.
\end{lem}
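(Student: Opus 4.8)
The plan is to define the $\Gamma$-action directly on $F$-compatible sequences and then extend it by continuity to the norm closure $S{\frak C}_F(H')$. Fix $\gamma \in \Gamma$. Since $\gamma$ acts isometrically on $H'$, for each $i$ it induces (tensored with the identity on $S_{r_i}$) a $*$-isomorphism
$$\gamma : S_{r_i}{\frak C}(D'_{r_i}) \cong S_{r_i} \hat{\otimes} C_0(\gamma D'_{r_i}, Cl(\gamma W'_i)), \qquad (\gamma v)(w') = \gamma\bigl(v(\gamma^{-1}w')\bigr).$$
Applying $\gamma$ termwise to an $F$-compatible sequence $\alpha = \{\alpha_i\}_{i \ge i_0}$ produces a new sequence $\gamma\alpha := \{\gamma\alpha_i\}_{i \ge i_0}$, and the first step is to recognize it as an $F$-compatible sequence for the $\gamma$-shifted approximation data $\{\gamma W'_i,\ \gamma^*F_i := \gamma\circ F_i\circ\gamma^{-1}\}_i$, which was already observed to be another finite approximation of $F$ — and in fact of $F$ itself, since $\Gamma$-equivariance of $F$ gives $\gamma^*F = F$.

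Concretely, write $\alpha_i = F_i^*(u_i)$ with $u_i = \beta(u_{i_0})$. Using that the polar decomposition is $\Gamma$-equivariant, $\overline{\gamma l_i\gamma^{-1}} = \gamma\,\bar l_i\,\gamma^{-1}$, one checks the identity $(\gamma^*F_i)^* = \gamma\circ F_i^*\circ\gamma^{-1}$ between the pull-back $*$-homomorphisms of Definition \ref{induced Clifford}; together with the $\Gamma$-equivariance of the Bott map ($\gamma\beta = \beta\gamma$), this gives
$$\gamma\alpha_i = \gamma\,F_i^*(u_i) = (\gamma^*F_i)^*\bigl(\gamma u_i\bigr), \qquad \gamma u_i = \beta(\gamma u_{i_0}),$$
so $\gamma\alpha$ is indeed $(\gamma^*F_i)$-compatible. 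Moreover the $\gamma$-shifted data is again a $\Gamma$-finite approximation of $F$: for every $\gamma'\in\Gamma$ the density of $\cup_i\{\gamma'(\gamma W'_i)\cap\gamma W'_i\}$ follows by translating the density hypothesis for the original data by $\gamma^{-1}\gamma'\gamma$ and applying the homeomorphism $\gamma$.

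Next I would invoke part $(3)$ of the preceding lemma: since both $\{F_i\}$ and $\{\gamma^*F_i\}$ are $\Gamma$-finite approximations of $F$, the induced Clifford algebras built from them are canonically identified with $S{\frak C}_F(H')$. Transporting $\gamma\alpha$ through this identification defines an element $\gamma\cdot\alpha \in S{\frak C}_F(H')$, and I set $\gamma\cdot[\{\alpha_i\}] := [\{\gamma\alpha_i\}]$. Norm preservation $||\gamma\cdot\alpha|| = ||\alpha||$ holds because $\gamma$ acts isometrically on each $S_{r_i}\hat{\otimes} C_0(D'_{r_i}, Cl(W'_i))$, so the double limit $\lim_j\lim_i ||\gamma\alpha_i|\gamma D'_{r_j}||$ computing the norm for the shifted data equals $\lim_j\lim_i ||\alpha_i|D'_{r_j}||$, which by part $(3)$ equals $||\alpha||$ in $S{\frak C}_F(H')$. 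Since $\gamma$ is linear, multiplicative and $*$-preserving at each level, $\gamma\cdot$ is a $*$-homomorphism on the dense subalgebra of $F$-compatible sequences, and by norm preservation it extends uniquely to an isometric $*$-automorphism of $S{\frak C}_F(H')$.

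Finally one checks that $\gamma\mapsto\gamma\cdot$ is a homomorphism $\Gamma\to\mathrm{Aut}(S{\frak C}_F(H'))$: on sequences $(\gamma_1\gamma_2)\alpha_i = \gamma_1(\gamma_2\alpha_i)$ holds term by term, the identity of $\Gamma$ clearly acts trivially, and the identifications furnished by part $(3)$ compose compatibly because they are canonical. I expect the main difficulty to be organizational rather than substantive: one must make the identification supplied by part $(3)$ explicit enough — comparing $\{F_i\}$, $\{\gamma^*F_i\}$, and, for the group law, $\{\gamma_2^*F_i\}$ and $\{(\gamma_1\gamma_2)^*F_i\}$ simultaneously — so that the action is well defined on equivalence classes and no cocycle ambiguity appears in the composition. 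The density condition built into $\Gamma$-finite approximability is precisely what keeps all of these comparisons inside one and the same $C^*$-algebra.
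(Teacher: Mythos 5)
Your proposal is correct and follows essentially the same route as the paper: apply $\gamma$ termwise to an $F$-compatible sequence, verify via the equivariance of the Bott map and of the polar decomposition that $\gamma\alpha_i = (\gamma^*F_i)^*(\beta(\gamma^* u_{i_0}))$ --- so $\gamma\alpha$ is $F$-compatible for the $\gamma$-shifted approximation data --- and then invoke the independence of $S{\frak C}_F(H')$ from the choice of $\Gamma$-finite approximation to identify the result back inside the same algebra. You spell out a few points the paper leaves implicit (that the shifted data still satisfies the density condition, norm preservation, and the group law on the identifications), but the core computation is exactly the paper's.
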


\begin{proof}
Recall that
if  $\{ W'_i, F_i, l_i\}_i$ is a finite approximation data,
then so is $\{ \gamma(W'_i), \gamma^*(F_i), \gamma^*(l_i)\}_i$
(see the last sentence in section \ref{fin-dim-appr}).

Take an $F$-compatible  sequence 
 $\alpha = \{\alpha_i\}_{i \geq i_0}$ with respect to 
 $F_i: D_{r_i}' \to W_{i}$,
 where $\alpha_i = (F_i)^*(u_{i})$
 and $u_i = \beta(u_{i_0}) \in S_{s_i}{\frak C}(D_{s_i})$. 
Then
$\{\gamma^* \alpha_i\}_i$ is an $\gamma^*(F)$-compatible
sequence as,  for $m' \in \gamma(D'_{r_i})$, 
\begin{align*}
\gamma^*(\alpha_i) (m') & = \gamma^*((F_i)^*(u_{i}))(m')
= \gamma u_i(F_i(\gamma^{-1}(m'))) \\
& = \gamma \beta(u_{i_0})(F_i(\gamma^{-1}(m')))
= \beta( \gamma u_{i_0})(F_i(\gamma^{-1}(m'))) \\
& = \beta( \gamma u_{i_0} \gamma^{-1})( \gamma F_i(\gamma^{-1}(m')))
= (\gamma^*F_i)^*(\gamma^*(u_i))(m').
\end{align*}
Thus,  $\gamma^*(\alpha_i) = (\gamma^*F_i)^*(\beta(\gamma^*(u_{i_0})))$
holds.
\end{proof}

\section{Higher degree $*$-homomorphism}

Let $F = l+c: H' \to H$ be a $\Gamma$-equivariant nonlinear map, 
whose linear part $l$
gives an isomorphism.
 For a finite-dimensional linear subspace $V \subset H $, 
denote the orthogonal projection by
$\text{pr}_V: H \to V$. For $V' =  l^{-1}(V)$,
 we have the modified map
$$F_V = l+ \text{pr}_V \circ c : V' \to V.$$
The restriction  map
$F_V \to F_{U}$ satisfies the formula
$F_{U}= \text{pr}_{U} \circ F_V |_{U}$,
for a   linear  subspace $U \subset V$.

Our initial  idea was to pull back 
$W_i =l(W'_i)$ by $F_{W_i}$ and combine them all.
For $F_i = \text{pr}_i \circ F  : W_i' \to W_i$, consider the induced $*$-homomorphism
$F_i^* : S{\frak C}(W_i)  \to   S {\frak C}(W_i')$.
Let us explain how  difficulty arises if one tries to obtain
a $*$-homomorphism  in this way.
For simplicity, assume  
$l$ is unitary and the image of $c$ is contained in a finite-dimensional linear subspace $V \subset H$.
This will be the simplest situation but already some difficulty appears when we try  to construct
the induced $*$-homomorphism by $F$.

Assume  $F$ is metrically proper.
This is equivalent to saying  that the restriction $F: V' \to V$ is proper in this particular situation, 
where $V' =l^{-1}(V)$ is the finite-dimensional linear subspace.
 Let us consider  the diagram
$$\begin{CD}
  S{\frak C}(W_i)  @> F^*_i >>  S {\frak C}(W_i')  \\
  @V  \beta  VV@V   \beta_l  VV \\
  S{\frak C}(W_{i+1})  
@> F_{i+1}^*  >> S{\frak C}(W_{i+1}') 
\end{CD}$$
This diagram is far from commutative as
 the following map
$$c : (W_i')^{\perp} \cap W'_{i+1} \to V$$
can affect to control the behavior of $F$ as $i \to \infty$.
Thus,  the induced maps by $F_i^*$ will not converge
in $S{\frak C}(H')$
 in general.
This is a point where we have account for the  nonlinearity  
of $F$
to construct the target  $C^*$-algebra, and is the reason  we have to 
use $S{\frak C}_F(H')$ instead of $S{\frak C}(H')$ below.

\subsection{Degree of proper maps}
Let $E',E$ be  two  finite-dimensional vector spaces,
and $F =l+c: E' \to E$ be a proper smooth map whose
 linear part $l: E' \cong E$ gives
an isomorphism.

Let us reconstruct the degree of $F \in {\mathbb Z}$ by use of $l$.
Let $\bar{l}: E' \to E$ be the unitary corresponding to the polar decomposition.
 Then, $\bar{l}$  induces the  algebra isomorphism
$\bar{l}: Cl(E') \cong Cl(E)$, and 
 we have the induced $*$-homomorphism
\begin{align*}
& F^*: S{\frak C}(E)  = C_0(E, Cl(E)) \to S{\frak C}(E')   \\
& F^*(h)(v) = \bar{l}^{-1}(h(F(v))).
\end{align*}
Recall $S{\frak C}_F(E') = F^*(S{\frak C}(E) )$.
Then $F^*$ can be described as a  $*$-homomorphism
$$F^* : S{\frak C}(E) \to S{\frak C}_F(E') .$$

Let us consider the induced  homomorphisms 
between $K$-groups
$$\begin{CD}
  K_1(S{\frak C}(E))  @> F^* >>   K_1(S{\frak C}_F(E'))  @> \text{inc}_*
>> K_1(S{\frak C}(E'))  \\
@A  \beta AA   @. @AA  \beta A \\
 K_1(C_0({\mathbb R}))  @. @. K_1(C_0({\mathbb R}))
\end{CD}$$
where both $\beta$ give the   isomorphisms by \ref{HKT}.

Let 
$\tilde{F}^*: K_1(C_0({\mathbb R})) \to K_1(C_0({\mathbb R}))$ 
be the homomorphism determined uniquely  so that the diagram commutes. 
Let us equip  orientations on both $E'$ and $E$
so that $l$ preserves them.

\begin{lem} Passing through the isomorphism
$K_1(C_0({\mathbb R})) \cong   {\mathbb Z}$, 
$$\tilde{F}^* : {\mathbb Z} \to {\mathbb Z}$$
is given by multiplication by the degree of  $F$.
\end{lem}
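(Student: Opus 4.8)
The plan is to prove the lemma by reducing it, through two homotopy–theoretic simplifications, to an elementary winding–number computation.

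\emph{Step 1: identify the composite.} Since $\bar l^{-1}\colon Cl(E)\to Cl(E')$ is a graded $*$-isomorphism it commutes with functional calculus, so for $v'\in E'$
\[
 F^*(\beta(f))(v')=\bar l^{-1}\!\left(f(X\hat{\otimes} 1+1\hat{\otimes} F(v'))\right)=f\!\left(X\hat{\otimes} 1+1\hat{\otimes} \bar l^{-1}(F(v'))\right)=\beta_F(f)(v'),
\]
because $\bar l^{-1}(F(v'))=C_F(v')$; this is the identity of Remark~\ref{beta_F-formula}. Hence $\mathrm{inc}_*\circ F^*\circ\beta=(\beta_F)_*$ and $\tilde F^*=\beta_*^{-1}\circ(\beta_F)_*$ on $K_1$. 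Now set $C_F=\bar l^{-1}\circ F\colon E'\to E'$, a proper map because $F$ is proper and $\bar l^{-1}$ is linear, and let $C_F^{\sharp}\colon C_0(E',Cl(E'))\to C_0(E',Cl(E'))$ be pull–back of the base coordinate along $C_F$ (leaving the Clifford fibre untouched); this is a well–defined $*$-homomorphism precisely because $C_F$ is proper. Composing with the inclusion one gets $\beta_F=(\mathrm{id}\hat{\otimes} C_F^{\sharp})\circ\beta$, where now $\beta$ denotes the standard Bott map $C_0(\mathbb R)\to S{\frak C}(E')$. Thus $\tilde F^*$ is conjugate, through the isomorphism $\beta_*$, to the endomorphism of $K_1(S{\frak C}(E'))$ induced by $C_F^{\sharp}$, and this endomorphism depends only on the proper homotopy class of $C_F$.

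\emph{Step 2: reduce to a model map.} First, $\deg C_F=\deg F$, because $\bar l=l\circ(l^*l)^{-1/2}$ with $(l^*l)^{-1/2}$ positive definite — hence orientation preserving — and $l$ preserves the chosen orientations, so $\bar l^{-1}$ has degree $+1$. The homotopy invariance asserted above is seen as follows: a proper homotopy $C_t$ gives a continuous path of degree–one, odd, essentially self–adjoint multipliers $X\hat{\otimes}1+1\hat{\otimes}C_t$ of $C_0([0,1]\times E',Cl(E'))$, and the identity $(X\hat{\otimes}1+1\hat{\otimes}C_t)^2=X^2\hat{\otimes}1+|C_t|^2$ (the cross term vanishing because $X$ and $C_t$ are both odd) shows that $f(X\hat{\otimes}1+1\hat{\otimes}C_t)$ tends to $0$ as $|v'|\to\infty$ uniformly in $t$; hence $f\mapsto f(X\hat{\otimes}1+1\hat{\otimes}C_{\bullet})$ is a $*$-homomorphism $C_0(\mathbb R)\to C([0,1])\otimes S{\frak C}(E')$ and its evaluations at $t=0,1$ are homotopic. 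Finally, proper maps $\mathbb R^n\to\mathbb R^n$ are classified up to proper homotopy by degree — one–point compactification identifies them with based maps $S^n\to S^n$, classified by $\pi_n(S^n)\cong\mathbb Z$ — so, with $n=\dim E'$ (the case $n=0$ being trivial) and $d=\deg F$, the map $C_F$ is properly homotopic to $g_d\times\mathrm{id}_{\mathbb R^{n-1}}$ for a fixed proper map $g_d\colon\mathbb R\to\mathbb R$ of degree $d$.

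\emph{Step 3: the model computation.} Using $C_0(\mathbb R^n,Cl(\mathbb R^n))\cong C_0(\mathbb R,Cl(\mathbb R))^{\hat{\otimes} n}$ together with finite–dimensional Bott periodicity — each factor has $K_*(C_0(\mathbb R,Cl(\mathbb R)))\cong\mathbb Z$ and exterior multiplication by its generator is an isomorphism, naturally in the remaining factors — the endomorphism induced by $(g_d\times\mathrm{id})^{\sharp}=g_d^{\sharp}\hat{\otimes}\mathrm{id}$ on $K_1(S{\frak C}(\mathbb R^n))\cong\mathbb Z$ is identified with the one induced by $g_d^{\sharp}$ in the first factor. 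That endomorphism, on $K_1(C_0(\mathbb R))\cong\tilde K^1(S^1)\cong\pi_1(U(1))\cong\mathbb Z$, is pull–back of the class $z\mapsto z$ along the based degree–$d$ map $\hat g_d\colon S^1\to S^1$, i.e. $z\mapsto\hat g_d(z)$, whose winding number is $d$. Unwinding the identifications through $\beta_*^{-1}$ then gives $\tilde F^*(1)=d=\deg F$, the chosen orientations having been used exactly so that the generators line up without sign.

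\textbf{Main obstacle.} The one genuinely analytic ingredient, and the place I expect the work to be, is the homotopy invariance in Step~2 — that the twisted Bott construction $C\mapsto\beta_C$ carries proper homotopies to homotopies of $*$-homomorphisms. It rests on the gradedness identity $(X\hat{\otimes}1+1\hat{\otimes}C)^2=X^2\hat{\otimes}1+|C|^2$, which is precisely what forces $f(X\hat{\otimes}1+1\hat{\otimes}C)$ into $C_0$ and makes it depend continuously on $C$ in the proper–map topology. The remaining pieces — the identity $\beta_F=(\mathrm{id}\hat{\otimes} C_F^{\sharp})\circ\beta$, naturality of the finite–dimensional Thom isomorphism under pull–back maps, and the final winding–number count — are routine.
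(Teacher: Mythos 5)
Your Step~1 is exactly right and is the content of Remark~\ref{beta_F-formula} in the paper: the composite $F^*\circ\beta$ factors as $\beta_F=(\mathrm{id}\hat{\otimes}C_F^{\sharp})\circ\beta$, so $\tilde F^*$ is conjugate, via $\beta_*$, to the pull--back $C_F^{\sharp}$ on $K_1(S{\frak C}(E'))$.  Your computation $\deg C_F=\deg F$ using the orientation--preserving path from $(l^*l)^{-1/2}$ to the identity is also correct.  But there is a genuine gap in Step~2: you reduce to a model map $g_d\times\mathrm{id}_{\mathbb R^{n-1}}$ with $g_d\colon\mathbb R\to\mathbb R$ ``a fixed proper map of degree $d$,'' and no such map exists for $|d|\ge 2$.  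A proper continuous map $\mathbb R\to\mathbb R$ compactifies to a based map $S^1\to S^1$ whose only preimage of $\infty$ is $\infty$; counting signed preimages of a regular value (they alternate in sign along $\mathbb R$) shows its degree lies in $\{-1,0,1\}$.  So for $\dim E'\ge 2$ and $|\deg F|\ge 2$ your model simply isn't available, and the rest of the winding--number computation in Step~3, which is tailored to a one--dimensional $g_d$, cannot be carried out as written.  The proper--homotopy classification of proper self--maps of $\mathbb R^n$ by degree that you invoke is true for $n\ge 2$, but it does not produce a splitting into a degree--$d$ factor on a single $\mathbb R$; a legitimate model would have to live on $\mathbb R^2$ (e.g.\ $z\mapsto z^{\pm d}$ on $\mathbb C\cong\mathbb R^2$, times the identity), which changes the bookkeeping in Step~3.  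Also, your quick one--point--compactification argument for the classification itself has a secondary gap: a based homotopy of $\hat C_0,\hat C_1\colon S^n\to S^n$ may pass through maps sending points of $\mathbb R^n$ to $\infty$ and so need not restrict to a proper homotopy on $\mathbb R^n$; this can be repaired but needs more than the stated argument.

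For comparison, the paper avoids both issues by exploiting the special structure $F=l+c$: it post--composes $F^*$ with $(\bar l^{-1})^*$ to obtain a self--map of $S{\frak C}(E)$, then uses the \emph{linear} homotopy $l_t$ from $\bar l$ to $l$ to reduce to the case $l=\mathrm{id}$, where $F^*$ is just pull--back along $\mathrm{id}+c\circ l^{-1}$ on the $C_0(E)$ tensor factor; it then cites that pull--back by a proper map on $K_*(C_0(E))$ is multiplication by the degree (classical Bott periodicity, not a 1--dimensional winding number).  If you keep your factorization through $C_F^{\sharp}$, the cleanest repair of your Step~2 is to drop the explicit model $g_d\times\mathrm{id}$ and instead invoke directly that for a proper map $\varphi\colon\mathbb R^n\to\mathbb R^n$ of degree $d$, the induced endomorphism of $K^*_c(\mathbb R^n)\cong K_*(C_0(\mathbb R^n))\cong\mathbb Z$ is multiplication by $d$ (Poincar\'e duality or the Chern character), which is what the paper's step~3 tacitly uses.
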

\begin{proof} 
{\bf Step 1:}
Let us consider the composition of $*$-homomorphisms
$$C_0(E, Cl(E)) \to C_0(E', Cl(E')) \cong C_0(E, Cl(E))$$
where the first map is $F^*$ and the second map  is given by
$$(\bar{l}^{-1})^* (h') (v ) \equiv \bar{l}(h'(\bar{l}^{-1}(v))).$$
The latter gives an isomorphism since $l$ is  isomorphic.
Thus,  it is sufficient to see the conclusion for the composition.
The composition is given by
$$h \to \{ \ v \mapsto  h(F \circ \bar{l}^{-1}  (v))\}$$

{\bf Step 2:}
Let $l_t: E' \cong E$ be another family of linear  isomorphisms
with  $l_0=\bar{l}$ and $l_1=l$.
It  induces a family of $*$-homomorphisms
\begin{align*}
&  F^*_t : C_0(E, Cl(E)) \to  C_0(E, Cl(E)) \\
& \qquad \qquad h \to \{ \ v \to  h(F \circ (l_t)^{-1}  (v))\}.
\end{align*}
Since homotopic $*$-homomorphisms induce the same maps between their $K$-groups, 
it is sufficient  to see the conclusion for $F^*_1$.
Noting the equality $F \circ l^{-1}  = 1 + c \circ  l^{-1} $, 
it is enough to assume $l $ is the identity.

{\bf Step 3:}
When $l$ is the identity,    
 $F^*: S{\frak C}(E) \to S{\frak C}(E) $ 
 is given by 
 $$\text{ id } \times F^* : S{\frak C}(E)  \cong (S \hat{\otimes} Cl(E)) \otimes C_0(E) 
 \to  (S \hat{\otimes} Cl(E)) \otimes C_0(E) $$
whose induced homomorphim on a $K$-group is given by degree $F$,
passing through the isomorphism
$$K_1(S \hat{\otimes} Cl(E)\otimes C_0(E)) \cong K_1(S) \cong
K_*(C_0(E))  \cong {\mathbb Z}$$
where $*$ is $0$ or $1$ with respect to whether $\dim E$ is even or odd.
The first isomorphism comes from Proposition \ref{HKT},
 and the second  is 
the classical Bott periodicity (see \cite{atiyah}).
\end{proof}


\subsection{Induced map for a strongly finitely approximable map}
Let $F = l+c : H' \to H$ be a strongly finitely approximable map.
There are finite-dimensional linear subspaces
$W_i' \subset W_{i+1}' \subset \dots \subset H'$ whose union is dense, 
such that the compositions with the projections 
$\text{pr}_i \circ F: W_i' \to W_i =l(W_i)$ consist of a finitely approximable 
data with the constants $r_i, s_i \to \infty$.

Let us consider the restriction
$$F_{i+1} : D'_{r_i} \cap W_{i+1}' \to W_{i+1}.$$
Decompose
$W_i' \oplus U_i' =  W_{i+1}' $, and  define $F_{i+1}^0: W_{i+1}' \to W_{i+1}$  
by
$$F_{i+1}^0 (w'+u')=  F_{i}(w') + l(u').$$
Then by definition,  the estimate 
$$\sup_{m \in D'_{r_i} \cap W_{i+1}'} \ ||F_{i+1} (m) - F_{i+1}^0(m)|| < \delta_i$$
holds, where $0 < \delta_i \to0$.

\begin{sublem} \label{bott-commu}
Suppose $l: H' \cong H$ is unitary. 
Let $\beta: S{\frak C}(W_{i}')  \to S{\frak C}(W_{i+1}') $ be the Bott map.
Then the equality holds
$$\beta \circ F_i^* =  (F_{i+1}^0)^* \circ \beta: S{\frak C}(W_{i})  \to S{\frak C}(W_{i+1}') .$$
\end{sublem}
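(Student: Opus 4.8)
The plan is to unwind both sides of the claimed identity as explicit functional-calculus expressions and observe that they agree because $F_{i+1}^0$ is built diagonally from $F_i$ on $W_i'$ and from the linear part $l$ on the complement $U_i'$. Write $W_{i+1}' = W_i' \oplus U_i'$ and, since $l$ is unitary, use the tensor decomposition $S{\frak C}(W_{i+1}') \cong S{\frak C}(W_i') \hat{\otimes} {\frak C}(U_i')$, under which the Bott map $\beta\colon S{\frak C}(W_i')\to S{\frak C}(W_{i+1}')$ is $f(\,\cdot\,)\mapsto f(X\hat\otimes 1 + 1\hat\otimes C_{W_i'} + 1\hat\otimes C_{U_i'})$ in the obvious abuse of notation; more precisely $\beta = \beta_{U_i'}\hat{\otimes}\,\mathrm{id}$ where $C_{U_i'}$ is the Clifford operator of $U_i'$. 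I would first record that $F_i^*$ on $S{\frak C}(W_i')$ is, by Remark \ref{beta_F-formula}, the map sending $f(X\hat\otimes 1 + 1\hat\otimes C)$ to $f(X\hat\otimes 1 + 1\hat\otimes C_{F_i})$, i.e.\ it replaces the linear Clifford operator by the twisted one $C_{F_i} = \bar l_i^{-1}\circ F_i = F_i$ (using $l$ unitary, so $\bar l_i = l_i = \mathrm{id}$ after identification). So $\beta\circ F_i^*$ sends $f \mapsto f\bigl(X\hat\otimes 1 + 1\hat\otimes(C_{F_i} + C_{U_i'})\bigr)$.

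Next I would compute the right-hand side. By definition $(F_{i+1}^0)^*(h)(v') = \bar l^{-1}\bigl(h(F_{i+1}^0(v'))\bigr)$, and on a Bott element $h = g(X\hat\otimes 1 + 1\hat\otimes C_{W_{i+1}'})$ the composite $(F_{i+1}^0)^*\circ\beta$ again acts by functional calculus: it replaces the argument $X\hat\otimes 1 + 1\hat\otimes C_{W_{i+1}'}$ by $X\hat\otimes 1 + 1\hat\otimes \widetilde C$, where $\widetilde C$ is the operator obtained by precomposing the Clifford operator $C_{W_{i+1}'}(v') = v'$ with $F_{i+1}^0$ and then applying $\bar l^{-1}$. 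Using the diagonal formula $F_{i+1}^0(w'+u') = F_i(w') + l(u')$ together with $\bar l^{-1} = l^{-1}$ (unitarity), one gets $\bar l^{-1}\circ F_{i+1}^0(w'+u') = F_i(w') + u'$, so $\widetilde C = C_{F_i}\hat\otimes 1 + 1\hat\otimes C_{U_i'}$ under the same tensor decomposition. Hence $(F_{i+1}^0)^*\circ\beta$ sends $f\mapsto f\bigl(X\hat\otimes 1 + 1\hat\otimes(C_{F_i}+C_{U_i'})\bigr)$, matching the left-hand side.

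The two sides therefore agree on all Bott generators $\beta(f)$, $f\in C_0(\mathbb R)$, and since $S{\frak C}(W_i') = \beta(C_0(\mathbb R))$-generated (it is the closure of products of such elements, by the HKT generation statement recalled around Lemma \ref{cpt-supp}), and both $\beta\circ F_i^*$ and $(F_{i+1}^0)^*\circ\beta$ are $*$-homomorphisms, equality on generators forces equality everywhere. The step I expect to be the main obstacle is making the functional-calculus manipulation rigorous when the Clifford operators are unbounded multipliers: one must check that $\widetilde C = C_{F_i}\hat\otimes 1 + 1\hat\otimes C_{U_i'}$ really is the correct essentially self-adjoint unbounded multiplier and that $(F_{i+1}^0)^*$ commutes with $g(\,\cdot\,)$ for the relevant resolvent/exponential generating functions $g$, rather than only formally; this is where one should invoke the standard fact (used throughout \cite{hkt}) that a proper map pulling back a Clifford operator to another Clifford operator intertwines the respective functional calculi, reducing everything to the bounded generators $(x\pm i)^{-1}$ and a density argument.
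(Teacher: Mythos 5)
Your functional-calculus computation of both sides on Bott elements is essentially the same observation the paper makes — that $F_{i+1}^0$ is diagonal, so the extra Clifford variable coming from $U_i'$ is preserved untouched by $(F_{i+1}^0)^*$ because $l$ is unitary. But the reduction step is broken: you claim that $S{\frak C}(W_i)$ is generated by the Bott elements $\beta(f)$, $f\in C_0(\mathbb R)$, and this is false. The Bott map $\beta: C_0(\mathbb R)\to S{\frak C}(W_i)$ is already a $*$-homomorphism, so its image is a norm-closed $*$-subalgebra; taking ``products of such elements and closing'' produces nothing beyond $\beta(C_0(\mathbb R))\cong C_0(\sigma(D))$ for $D = X\hat\otimes 1 + 1\hat\otimes C$, a \emph{commutative} $C^*$-subalgebra. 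For $\dim W_i\geq 2$ the ambient algebra $S{\frak C}(W_i)=C_0(\mathbb R)\hat\otimes C_0(W_i,Cl(W_i))$ is noncommutative, so the containment is strict; and Lemma~\ref{cpt-supp} does not assert otherwise — what HKT prove is that $\beta$ is an asymptotic equivalence inducing a $K$-theory isomorphism, which is far weaker than algebra generation. Thus the equality is established on a proper subalgebra, not all of $S{\frak C}(W_i)$, and the proof does not go through as written.

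The paper avoids this by checking the identity on elementary tensors $f\hat\otimes h$ with $f\in C_0(\mathbb R)$ and $h\in C_0(W_i,Cl(W_i))$; these have dense linear span in $S{\frak C}(W_i)$, which does suffice since both sides are $*$-homomorphisms. Under the splitting $W_{i+1}' = U_i'\oplus W_i'$ and the isomorphism $S{\frak C}(W_{i+1}')\cong S{\frak C}(U_i')\hat\otimes {\frak C}(W_i')$, one computes $(\beta\circ F_i^*)(f\hat\otimes h) = \beta(f)\hat\otimes F_i^*(h)$ directly, while the diagonal form $F_{i+1}^0 = l\oplus F_i$ gives $(F_{i+1}^0)^*\circ\beta(f\hat\otimes h) = l^*(\beta(f))\hat\otimes F_i^*(h)$, and unitarity of $l$ gives $l^*(\beta(f))=\beta(f)$. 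Your diagonal intuition is exactly the one driving this computation; what you need to change is the density/generation input — replace ``Bott elements generate'' with ``elementary tensors have dense span,'' which forces you out of pure functional calculus and into the elementary-tensor bookkeeping the paper uses.
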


\begin{proof}
Take $f \hat{\otimes} h \in S{\frak C}(W_{i})$ with
$(\beta \circ F_i^*)(f \hat{\otimes} h) = \beta( f) \hat{\otimes}F_i^*(h)$.
By contrast, 
$\beta(f \hat{\otimes} h) = \beta(f ) \hat{\otimes} h $ and, hence,
\begin{align*}
 (F_{i+1}^0)^* \circ \beta( f \hat{\otimes} h) & =
  (l \oplus F_i )^* \circ  \beta( f) \hat{\otimes} h 
   = l^*(\beta(f)) \hat{\otimes} F_i^*(h)
 \end{align*}
 where $l^* : S{\frak C}(U_i) \cong S{\frak C}(U'_i)$ with $U_i = l(U_i')$.
Since $l$ is unitary, the equality holds
$$ l^*(\beta(f)) = \beta(f) \in S{\frak C}(U'_i).$$
\end{proof}

\begin{prop}\label{cpt-degree}
Let $F = l+c : H' \to H$ be a strongly finitely approximable map.
Then  the family $\{ F_i^*\}_i$  induces a 
 $*$-homomorphism
$$F^* : S{\frak C}(H)\to  S{\frak C}(H').$$
\end{prop}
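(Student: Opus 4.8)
\textbf{Proof plan for Proposition \ref{cpt-degree}.}
The strategy is to realize $F^*$ as a limit of the finite-dimensional pull-backs $F_i^*$, using the directed system that computes $S{\frak C}(H)$ on the source side and $S{\frak C}(H')$ on the target side. For the source, recall from Lemma \ref{cpt-supp} that $S{\frak C}(H) = \varinjlim_{i,j} S_{r_i}{\frak C}(D_{r_i}^j)$, so it suffices to define $F^*$ coherently on each piece $S_{s_i}{\frak C}(D_{s_i} \cap W_i)$ and check compatibility with the Bott connecting maps. Since $F$ is strongly finitely approximable, $l_i = l|_{W_i'}$ and the properness inclusion $F_i^{-1}(D_{s_i}\cap W_i) \subset D_{r_i}' \cap W_i'$ holds, so each $F_i^* : S_{s_i}{\frak C}(D_{s_i}\cap W_i) \to S_{r_i}{\frak C}(D_{r_i}'\cap W_i')$ is a genuine $*$-homomorphism given by $h \mapsto (v' \mapsto \bar l_i^{-1}(h(F_i(v'))))$.

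First I would reduce to the case where $l$ is unitary: the polar decomposition $l = \bar l \sqrt{l^*l}$ lets one factor out the positive self-adjoint part, and the argument of Step 2 in the degree Lemma (homotoping $l_t$ between $\bar l$ and $l$, using homotopy invariance of the induced $K$-theory map, and absorbing $\sqrt{l^*l}^{-1}$) shows no loss of generality; alternatively one simply carries the $\bar l_i^{-1}$ factors throughout and notes they are harmless because $\|\bar l_i - \bar l\|_{W_{i_0}'} \to 0$. With $l$ unitary, Sublemma \ref{bott-commu} gives $\beta \circ F_i^* = (F_{i+1}^0)^* \circ \beta$ exactly, where $F_{i+1}^0 = l \oplus F_i$ on $W_{i+1}' = W_i' \oplus U_i'$. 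The discrepancy between $F_{i+1}^0$ and the true restriction $F_{i+1}$ is controlled by $\sup_{m \in D_{r_i}'\cap W_{i+1}'}\|F_{i+1}(m) - F_{i+1}^0(m)\| < \delta_i \to 0$, so $(F_{i+1}^0)^*$ and $F_{i+1}^*$ differ by a map whose norm tends to $0$ on each fixed disc $D_{r_{i_0}}'$. Hence for a fixed element $h \in S_{s_{i_0}}{\frak C}(D_{s_{i_0}})$ the sequence $F_i^*(\beta^{i-i_0}(h)) \in S_{r_i}{\frak C}(D_{r_i}'\cap W_i')$ is exactly an $F$-compatible sequence in the sense of Definition \ref{induced Clifford} up to errors $\sum_j \delta_j$-bounded in restriction norm; taking the limit defines $F^*(h)$ as an element of $S{\frak C}(H')$, and continuity of the construction in $h$ (each $F_i^*$ and $\beta$ are norm-decreasing) extends $F^*$ to all of $S{\frak C}(H)$.

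Next I would verify that this assignment is a well-defined $*$-homomorphism: independence of the choice of representative $h$ and of the index $i_0$ follows from Lemma \ref{HKT}-style stabilization ($\bar\beta_{j,j''} = \bar\beta_{j',j''}\circ\bar\beta_{j,j'}$) combined with the asymptotic commutativity of the $F_i^*$ with the standard Bott maps just established; the $*$-algebra operations pass to the limit because each $F_i^*$ is a $*$-homomorphism and the norm on $S{\frak C}(H')$ is the limit of restriction norms. The main obstacle, and the step requiring genuine care, is showing that the limiting sequence actually lands in $S{\frak C}(H')$ rather than merely in some larger completion — i.e. that $\{F_i^*(\beta^{i-i_0}(h))\}_i$ has the compact-support/vanishing-at-infinity behavior along the exhaustion $\{W_i'\}$ needed for Step 2 of Lemma \ref{cpt-supp}. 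This is where strong finite approximability is essential: because $c_i = \text{pr}_i\circ c$ with $\|(1-\text{pr}_i)\circ c|D_{r_i}'\| \to 0$, the nonlinear perturbation does not spread the support transversally to the exhaustion in an uncontrolled way — contrast the failure diagram in Section 5 for general $F$ — so the estimate $\|a_j\|_{C_0((D_{\sqrt 2 r})^c)} < 2\epsilon$ propagates through $F_i^*$. Once this uniform-properness-along-the-tower estimate is in hand, density of the image of the directed system (Lemma \ref{cpt-supp}, Step 2) closes the argument.
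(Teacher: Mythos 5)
Your overall strategy matches the paper's two-step proof: use Sublemma \ref{bott-commu} in the unitary case, control the discrepancy between $F_{i+1}$ and $F_{i+1}^0 = l \oplus F_i$ by the $\delta_i$-estimate, and then deal with the non-unitary linear part. You correctly identify the key structural lemma and the key quantitative estimate, and you correctly note that norm-decrease of $F_i^*$ and $\beta$ lets the construction extend by density. In that sense the core is right.

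Two points are imprecise enough that they matter. First, your preferred reduction to the unitary case — ``homotoping $l_t$ between $\bar l$ and $l$, using homotopy invariance of the induced $K$-theory map'' — cannot be used here. The proposition asserts the existence of a $*$-homomorphism, not merely a $K$-theory class, and homotopy invariance of $K_*$ does not manufacture a $*$-homomorphism. The paper instead replaces the standard Bott map on the target side by the variant Bott map $\beta_l$ of subsection \ref{var-bott}, shows that the exact commutation $\beta_l \circ F_i^* = (F_{i+1}^0)^* \circ \beta$ still holds, and then applies the $*$-isomorphism $S{\frak C}_l(H') \cong S{\frak C}(H')$ of Proposition \ref{iso}. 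Your second alternative (``carrying the $\bar l_i^{-1}$ factors throughout'') is the right instinct, but it needs to be implemented precisely via $\beta_l$ and Proposition \ref{iso} rather than by a soft density remark.

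Second, the concern in your last paragraph about whether the limit ``actually lands in $S{\frak C}(H')$ rather than merely in some larger completion'' misidentifies where the real work is. Each term $\beta(F_i^*(\alpha_i))$ is already an honest element of the complete $C^*$-algebra $S{\frak C}(H')$; once the $\delta_i$-estimate shows the sequence is Cauchy, the limit exists in $S{\frak C}(H')$ automatically. There is no enveloping ``larger completion'' at play, and the support/vanishing-at-infinity argument in Step 2 of Lemma \ref{cpt-supp} is about density of the restricted directed system computing the \emph{source} $S{\frak C}(H)$, not about the target. The substantive point that needs care here (and which your write-up should state explicitly rather than deflect) is that the $\delta_i$-estimate actually yields a Cauchy sequence — i.e.\ that the accumulated error from $F_j$ versus $G_{j,i} = l \oplus F_i$ stays small as $j \to \infty$ for fixed $i$, which is exactly what strong finite approximability ($\|(1-\text{pr}_i)\circ c|D_{r_i}'\|\to 0$) provides.
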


\begin{proof} 
{\bf Step 1:}
Let us take an element $\alpha  \in S{\frak C}(H)$ and 
its approximation 
$\alpha_i \in S_{r_i} {\frak C}(D_{r_i})$ with 
$\lim_{i \to \infty} \ \beta(\alpha_i) = \alpha \in S{\frak C}(H)$ 
by Lemma \ref{cpt-supp}.

Assume  $l :H' \cong H$ is unitary, and consider the following  two elements:
$$\beta(F^*_i(\alpha_i) ), \ F^*_{i+1}(\alpha_{i+1}) 
  \in S{\frak C}(W_{i+1}').$$
Then by Sublemma \ref{bott-commu} we have the estimates
\begin{align*}
||\beta(F^*_i & (\alpha_i) )  -  F^*_{i+1}(\alpha_{i+1}) ||  =
||(F^0_{i+1})^* ( \beta(\alpha_i) ) -  F^*_{i+1}(\alpha_{i+1}) || \\
& ||(F^0_{i+1})^* ( \beta(\alpha_i) )- F_{i+1}^* ( \beta(\alpha_i) ) +
||  F_{i+1}^* ( \beta(\alpha_i) )-  F^*_{i+1}(\alpha_{i+1}) || \\
& \leq \delta_i || \beta(\alpha_i) || + || \beta(\alpha_i) - \alpha_{i+1}||
\end{align*}
The first term on the right-hand side converges to zero since 
$ || \beta(\alpha_i) ||$
are uniformly bounded with $\delta_i \to 0$.
The second term also converges to zero.
Thus,  the $*$-homomorphisms asymptotically commute
 with the Bott map. Hence,
 the sequence $\beta(F^*_i  (\alpha_i) )  \in  S{\frak C}(H')$ converges, and 
 gives a $*$-homomorphism
 $F^* : \alpha \to F^*(\alpha) := \lim_i \ \beta(F^*_i  (\alpha_i) ) $.
 Clearly this assignment is independent of the  choice of 
 approximations of $\alpha$.

{\bf Step 2:}
Let us  consider the case when $l$ is not necessarily unitary, 
but is asymptotically unitary.

Let $\beta_l : S \to S{\frak C}_l(U_i')$ be the variant of the Bott map 
in \ref{var-bott}.  Then the same argument to Sublemma \ref{bott-commu} verifies the equality
$$\beta_l \circ F_i^* =  (F_{i+1}^0)^* \circ \beta: S{\frak C}(W_{i})  \to S{\frak C}_l(W_{i+1}') .$$
Hence the parallel estimate to step $1$ above verifies
that   
 the sequence converges
$$\beta_l(F_i^*(\alpha_i)) \in  S{\frak C}_l(H').$$
This also
gives a $*$-homomorphism
 $F^* : \alpha \to F^*(\alpha) := \lim_i \ \beta_l(F^*_i  (\alpha_i) ) $.
As $S{\frak C}_l(H') \cong S{\frak C}(H')$ are $*$-isomorphic 
by Proposition \ref{iso}, we obtain the desired $*$-homomorphism.
\end{proof}

\begin{rem}
Suppose  $F=l+c$ satisfies the conditions 
to be strongly finitely approximable, except that 
$l$ is not necessarily isomorphic, but the Fredholm index is zero.

We can  still construct the induced $*$-homomorphism
$F^* : S{\frak C}(H)\to  S{\frak C}(H')$ as below.

There are finite-dimensional linear subspaces
$V' \subset H'$ and $V \subset H$ such that
the restriction gives an isomorphism
$l : (V')^{\perp} \cong V^{\perp}$,
where $V^{\perp} \subset H$ is the orthogonal complement.
Choose any unitary
$l' : V' \cong V$ and take their sum
$$L \equiv l \oplus l' : (V')^{\perp} \oplus V'  \cong V^{\perp} \oplus V.$$

Let us use $L$ to pull back the Clifford algebras
and use $F$ itself to pull back the functions.
Then we can follow from step $1$ and step $2$ in the same way.
\end{rem}

\begin{defn}
Let $F: H' \to H$ be a strongly finitely approximable  map. 
Then the induced map
$$F^*: K_1(S{\frak C}(H)) \cong {\mathbb Z} \to K_1(S{\frak C}(H')) \cong {\mathbb Z}$$
is given by   multiplication by an integer degree $F \in \mathbb Z$.
We call it the $K$-theoretic  degree of $F$.
\end{defn}

\subsection{Induced map for $\Gamma$-finitely approximable map}
Let us start from a general property, and let
$H$ be a Hilbert space with exhaustion 
 $W_0 \subset \dots \subset W_i \subset  \dots \subset H$
 by finite-dimensional linear subspaces.
  Choose
divergent numbers $r_i < r_{i+1}< \dots \to \infty$, and
denote $r_i$ balls  by $D_{r_i} \subset W_i$.
 Let $S_r  =C_c(-r,r) \subset S$ be the set of  compactly supported continuous functions on $(-r,r)$.

The following restates Lemma \ref{cpt-supp}
\begin{lem}\label{cpt-supp-2}
 For any $ \alpha \in S{\frak C}(H)$, there is 
  a family
   $$\alpha_i \in S_{r_i} \hat{\otimes} C_0(D_{r_i}, Cl(W_i)) := S_{r_i} {\frak C}(D_{r_i})$$
such that their images by the Bott map converge to $\alpha$
$$\lim_{i \to \infty} \ \beta(\alpha_i) = \alpha \in S{\frak C}(H).$$
\end{lem}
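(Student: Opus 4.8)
The plan is to recall that $S\mathfrak{C}(H)$ was \emph{defined} as a direct limit, and simply unwind that definition with the particular exhaustion and parameters at hand. More precisely, Lemma~\ref{cpt-supp} established that the canonical embedding
$$I:\varinjlim_{i,j\to\infty} S_{r_i}\mathfrak{C}(D_{r_i}^j)\hookrightarrow S\mathfrak{C}(H)$$
has dense image, and its Step~2 produced, for a given $\alpha\in S\mathfrak{C}(H)$, an explicit approximating sequence $b_{i,j}=\psi_{i,j}a_j$ with $\lim_{i,j}\beta(b_{i,j})=\alpha$. The present statement is the special case where one takes the \emph{diagonal} $j=i$, so that the double limit collapses to a single index, and where $S_r$ is read as $C_c(-r,r)$ rather than $C_0(-r,r)$ — which is harmless since $C_c\subset C_0$ is dense and an extra cutoff absorbs the difference. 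So the first step is just to point at Lemma~\ref{cpt-supp} and extract exactly this consequence.

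Concretely, I would argue as follows. Given $\alpha\in S\mathfrak{C}(H)$ and $\varepsilon>0$, use the density of the union $\bigcup_j V_j$ to write $\alpha$ as a limit $\lim_j\beta(a_j)$ with $a_j\in S\mathfrak{C}(V_j)$; fix $j$ large enough that $\|\alpha-\beta(a_j)\|<\varepsilon/2$. Now $a_j=\lim_i(\chi_i\hat\otimes\varphi_{i,j})a_j$ inside $S\mathfrak{C}(V_j)$ by the cutoff argument of Step~2 (compact support in the $\mathbb{R}$-direction follows from the estimate on $\beta(f)$ near infinity, compact support in the disc direction from properness of $a_j$), so choose $i\ge j$ with $\|a_j-\psi_{i,j}a_j\|<\varepsilon/2$ and set $\alpha_i:=\psi_{i,i}a_i\in S_{r_i}\hat\otimes C_0(D_{r_i},Cl(W_i))$, possibly composing with an additional compactly supported cutoff to land in $C_c(-r_i,r_i)$. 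Since $\beta$ is a $*$-homomorphism it is norm-decreasing, so $\|\beta(a_j)-\beta(\psi_{i,j}a_j)\|<\varepsilon/2$, giving $\|\alpha-\beta(\alpha_i)\|<\varepsilon$; letting $\varepsilon\to0$ along a sequence produces the desired $\{\alpha_i\}_i$ with $\lim_i\beta(\alpha_i)=\alpha$.

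The only point requiring a little care — and the place I'd expect a referee to push — is the interchange of the two indices $i$ and $j$ when passing from the double direct limit to the diagonal sequence, together with matching the Clifford coefficients: $\alpha_i$ must live in $S_{r_i}\mathfrak{C}(D_{r_i})$ with coefficients in $Cl(W_i)$, not $Cl(V_i)$, so one needs the exhaustion $\{W_i\}$ here to be (cofinal in, hence interleaved with) the exhaustion $\{V_j\}$ used to build $S\mathfrak{C}(H)$; this is exactly the kind of cofinality already invoked implicitly whenever a direct limit over \emph{all} finite-dimensional subspaces is compared with one over a chosen exhaustion, and it costs nothing beyond passing to a common refinement and relabelling. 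Everything else is a verbatim re-reading of Lemma~\ref{cpt-supp}, so the proof is genuinely a one-line reduction once that bookkeeping is acknowledged.
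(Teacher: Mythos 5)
Your proposal is correct and takes exactly the paper's approach: the paper offers no separate proof, simply stating that the lemma ``restates Lemma~\ref{cpt-supp},'' and your argument is a faithful unwinding of Step~2 of that lemma's proof (the cutoff construction $\psi_{i,j}a_j$) specialized to the diagonal exhaustion $W_i$ with discs $D_{r_i}\subset W_i$. Your remark on cofinality of the two exhaustions and on $C_c$ versus $C_0$ correctly identifies the only bookkeeping the paper leaves implicit.
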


\subsubsection{Induced $*$-homomorphism} 
Let $H', H$ be Hilbert spaces on which $\Gamma$ act linearly and isometrically, and
let  $F = l+c: H' \to H$ be a $\Gamma$-equivariant map
such that $l: H' \cong H$ is a linear isomorphism.

Assume that 
$F$ is $\Gamma$-finitely approximable so that   there is
a  family of finite-dimensional linear subspaces
$$ W'_0   \subset   W'_1 
 \subset   \dots  \subset  W'_i   \subset  \dots  \subset H' $$ 
 with dense union,
 and a  family of maps   $F_i  : W'_i \to W_i =l_i(W_i')$
with the inclusions $F_i^{-1}(D_{s_i}) \subset D_{r_i}'$.
Moreover the following 
convergences hold for each $i_0$:
\begin{align*}
& \lim_{i \to \infty} \  \sup_{m \in D_{r_{i_0}}'} \ ||F(m) - F_i(m)|| =0 \qquad  (*_1) \\
& \lim_{i \to \infty}  \  
 \ || \ (l  -l_i)|W_{i_0}'   \ || =0 \qquad (*_2).
\end{align*}
    Recall the induced $*$-homomorphism
  $$F_i^*: S{\frak C}(D_{s_i}) \to S{\frak C}_{F_i}(D_{r_i}') $$ 
   and the induced Clifford $C^*$-algebra 
   $S{\frak C}_{F}(H') $ in Definition \ref{induced Clifford}.

  \vspace{3mm}

\begin{thm}\label{cov-degree}
Let $F =l+c: H' \to H$ be  $\Gamma$-finitely approximable.

Then it induces  the   equivariant $*$-homomorphism
$$F^* :  S{\frak C}(H)   \to  S{\frak C}_F(H').$$
\end{thm}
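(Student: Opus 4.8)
The plan is to mimic the strategy of Proposition \ref{cpt-degree}, upgrading the construction so that the limit lands in $S{\frak C}_F(H')$ rather than $S{\frak C}(H')$, and so that everything is visibly $\Gamma$-equivariant. First I would take $\alpha \in S{\frak C}(H)$ and, using Lemma \ref{cpt-supp-2}, choose an approximating family $\alpha_i \in S_{s_i}{\frak C}(D_{s_i})$ with $\beta(\alpha_i) \to \alpha$; by the standard Bott commutativity I may arrange $\alpha_{i+1} = \beta(\alpha_i)$ up to a norm-vanishing error, so that $\{\alpha_i\}$ is, in the notation of Definition \ref{induced Clifford}, essentially the Bott image of a single $\alpha_{i_0}$. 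Applying $F_i^*$ gives $F_i^*(\alpha_i) \in S_{r_i}{\frak C}_{F_i}(D_{r_i}')$, and the whole point of the definition of an $F$-compatible sequence is that $\{F_i^*(\alpha_i)\}_i$ is exactly such a sequence (up to the norm-vanishing errors already absorbed). Hence I would set $F^*(\alpha) := \{F_i^*(\alpha_i)\}_i \in S{\frak C}_F(H')$, with norm computed by the double limit $\lim_j \lim_i \|F_i^*(\alpha_i)|D_{r_j}'\|$ appearing in Definition \ref{induced Clifford}.

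The main work is to check this is well-defined, continuous, and a $*$-homomorphism. Well-definedness: if $\alpha_i$ and $\alpha_i'$ are two approximating families for the same $\alpha$, then $\|\beta(\alpha_i) - \beta(\alpha_i')\| \to 0$ in $S{\frak C}(H)$, and since each $F_i^*$ is norm-decreasing and $(*_1)$ controls the discrepancy between $F_i$ and $F$ on each fixed ball $D_{r_{i_0}}'$, the restrictions $F_i^*(\alpha_i)|D_{r_{i_0}}'$ and $F_i^*(\alpha_i')|D_{r_{i_0}}'$ have vanishing difference, so the two sequences define the same class. Norm-boundedness: $\|F^*(\alpha)\| \le \limsup_i \|F_i^*(\alpha_i)\|_{S_{r_i}{\frak C}_{F_i}} \le \limsup_i \|\alpha_i\|_{S_{s_i}{\frak C}} = \|\alpha\|$, so $F^*$ is contractive and extends to all of $S{\frak C}(H)$; it sends $F$-approximable $\alpha$'s to a dense set of $F$-compatible sequences, so the image lies in $S{\frak C}_F(H')$. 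That it respects products and involutions is formal once one notes that each $F_i^*$ is a genuine $*$-homomorphism and the double-limit norm is compatible with the algebra operations inherited from the $S_{r_i}{\frak C}_{F_i}(D_{r_i}')$; the only subtlety is that $F_i^*(\alpha_i)F_i^*(\beta_i) = F_i^*(\alpha_i\beta_i)$ holds on the nose, so multiplicativity passes to the limit.

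For $\Gamma$-equivariance I would invoke the lemma (just proved in Section \ref{ind.Cli.alg.}) that $\gamma^*$ acts on $S{\frak C}_F(H')$ by $\gamma^*(\alpha_i) = (\gamma^* F_i)^*(\beta(\gamma^* u_{i_0}))$, together with the fact that $\gamma$-shifting the approximation data $\{W_i', F_i, l_i\}$ produces another $\Gamma$-finite approximation of the same $F$, and that $S{\frak C}_F(H')$ is independent of this choice. Concretely: $\gamma(F^*(\alpha))$ is represented by $\{\gamma^* F_i^*(\alpha_i)\}_i$, while $F^*(\gamma \alpha)$ is represented using the shifted data; the near-equivariance estimate $\sup_m \|\gamma F_i(m) - F_i(\gamma m)\| \to 0$ recorded at the end of Section \ref{fin-dim-appr}, combined with $\Gamma$-equivariance of $\beta$ and of the polar-decomposition unitaries $\bar{l}_i$, shows these two representatives agree in the double-limit norm. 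The step I expect to be the real obstacle is controlling the error terms uniformly enough that the double limit $\lim_j\lim_i$ actually exists and is unchanged under all the manipulations above: one must check that the $\delta_i$-type discrepancies (between $F_i$ and $F_i^0$, between $l_i$ and $l$, between $\gamma F_i$ and $F_i\gamma$) die on each fixed ball $D_{r_j}'$ \emph{before} sending $j \to \infty$, and that taking $F_i^*$ does not amplify them — here the fact that $F_i^*$ is norm-decreasing and that $\bar{l}_i^{-1}$ is unitary (so induces an isometric $*$-isomorphism on Clifford algebras) is exactly what saves the argument, just as the choice of $\bar{l}$ over $l$ was motivated in the introduction.
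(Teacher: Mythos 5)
Your proposal is correct and follows essentially the same strategy as the paper: fix $v_{i_0}\in S_{s_{i_0}}{\frak C}(D_{s_{i_0}})$, send it to the genuine $F$-compatible sequence $\{F_i^*(\beta(v_{i_0}))\}_{i\ge i_0}$, check compatibility under the Bott inclusions, and use that $F_i^*$ and $\beta$ are norm-decreasing $*$-homomorphisms to extend to all of $S{\frak C}(H)$, with $\Gamma$-equivariance inherited from the lemma on the $\Gamma$-action on $S{\frak C}_F(H')$. The one place the paper is a bit more careful than your write-up is that it never treats $\{F_i^*(\alpha_i)\}_i$ for an arbitrary approximating family $\alpha_i$ as a literal element of $S{\frak C}_F(H')$; it first produces an honest $F$-compatible sequence from each fixed $v_{i_0}$ and only then passes to the norm limit over $i_0$, which is exactly the density-plus-contractivity argument you invoke to ``absorb'' the vanishing errors.
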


\begin{proof}
Let us take an element $v \in S{\frak C}(H)$ and
its  approximation $v = \lim_{i \to \infty} v_i$
with $v_i \in   S_{s_i}{\frak C}(D_{s_i})
= C_0(-s_i,s_i) \hat{\otimes}C_0(D_{s_i} , Cl(W_i))$.

Let us  recall
the $*$-homomorphism  in \ref{ind.Cli.alg.}
 $$  F_i^*: 
 S_{s_i}{\frak C}(D_{s_i}) \to   S_{r_i}{\frak C}_{F_i}(D'_{r_i}) .$$
Let us fix $i_0$ and let
$u_i = \beta(v_{i_0}) \in S_{s_i}{\frak C}(D_{s_i}) $
be the image of the standard Bott map.
Then the family
$$\{F_i^*(u_i)\}_{i \geq i_0}$$
determines an element in $ S{\frak C}_{F}(H')$, which gives 
 a $*$-homomorphism
$$F^*: S_{s_{i_0}}{\frak C}(D_{s_{i_0}}) \to  S{\frak C}_{F}(H')$$
since both $F_i^*$ and $\beta$ are $*$-homomorphisms.
Note that the composition of  two $*$-homomorphisms
$$\begin{CD}
S_{s_{i_0}}{\frak C}(D_{s_{i_0}}) @> \beta >>
S_{s_{i_0'}}{\frak C}(D_{s_{i_0'}}) @> F^* >>
S{\frak C}_{F}(H')
\end{CD}$$
coincides with $F^*: S_{s_{i_0}}{\frak C}(D_{s_{i_0}}) 
\to S{\frak C}_{F}(H')$.

For a small $\epsilon >0$, 
take  two sufficiently large $i_0'  \geq i_0 >>1$
such that the estimate
$||\beta(v_{i_0}) - v_{i_0'}|| < \epsilon$ 
holds,
and set  $u_i' = \beta(v_{i_0'}) \in
S_{s_i}{\frak C}(D_{s_i})$ for $i \geq i_0'$.
Since $F^*$ is 
 norm-decreasing,
the estimate
$
 ||F_i^*(u_i) -F_i^*(u'_{i})|| < \epsilon$
 holds for all $i \geq i_0'$.
Hence, the estimate
$$ ||F^*(v_{i_0}) -F^*(v_{i_0'})|| < \epsilon$$
holds.

Thus, we obtain the assignment
 $v \to \lim_{i_0 \to \infty} F^*(v_{i_0})$,
which
 gives 
 a $\Gamma$-equivariant $*$-homomorphism
$$F^*: S{\frak C}(H)   \to S{\frak C}_F(H')$$
where $\{v_i\}_i$ is any approximation of $v$.
\end{proof}

\begin{defn} 
Let $F: H' \to H$ be a $\Gamma$-finitely approximable map. 
 Then, the higher degree  of $F$
 is given by the induced  homomorphism
  $$F^*: K_{*+1}(C^*(\Gamma)) \to 
  K_{*}(S{\frak C}_F (H')\rtimes \Gamma).$$
   \end{defn}

     \vspace{3mm}
     
     \section{Computation of $K$-group of induced Clifford $C^*$-algebras}
We compute the equivariant $K$-group of  induced Clifford $C^*$-algebras 
for some particular cases. This   can be a simple model case for further computation of the groups.

\subsection{Basics}
Let us collect some of basics  which we will need.
We start from some analytic aspects of Sobolev spaces.
We denote by $W^{k,2}$ as the Sobolev $k$-norm
which is a linear subspace of  $L^2$.
It is a Hilbert space and, hence, complete 
by the norm which involves  derivatives up to  the
$k$-th order, and 
  incomplete with respect to the $L^2$ inner product
for $k \geq 1$.


The following is well known.

\begin{lem} \label{Sob}
Suppose $k \geq 1$. Then

$(1)$ 
  The multiplication
 $$W^{k,2}(S^1) \otimes W^{k,2}(S^1) \to W^{k,2}(S^1)$$
 is compact on each bounded set.
 
 $(2)$   The continuous embedding
  $W^{k,2}(S^1) \hookrightarrow C^0(S^1)$
  holds.
  \end{lem}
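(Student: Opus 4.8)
The plan is to prove both statements as standard facts about periodic Sobolev spaces, using Fourier series on $S^1$. Identify $W^{k,2}(S^1)$ with the space of sequences $(a_n)_{n\in\mathbb Z}$ such that $\sum_n (1+n^2)^k |a_n|^2 < \infty$, with the equivalent norm $\|a\|_{k}^2 = \sum_n (1+n^2)^k |a_n|^2$. For part $(2)$, I would estimate the sup norm of $f = \sum_n a_n e^{in\theta}$ by $\|f\|_{C^0} \le \sum_n |a_n|$, and then apply Cauchy--Schwarz: $\sum_n |a_n| = \sum_n (1+n^2)^{-k/2}(1+n^2)^{k/2}|a_n| \le \bigl(\sum_n (1+n^2)^{-k}\bigr)^{1/2}\|f\|_k$, where the first factor is finite precisely because $k \ge 1$. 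This gives the continuous embedding $W^{k,2}(S^1)\hookrightarrow C^0(S^1)$, and in fact continuity of the inclusion into the Banach algebra $C^0(S^1)$ under pointwise multiplication.

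For part $(1)$, the first step is to show that pointwise multiplication is bounded $W^{k,2}\otimes W^{k,2}\to W^{k,2}$; combined with part $(2)$ this is the statement that $W^{k,2}(S^1)$ is a Banach algebra. I would verify the bilinear estimate $\|fg\|_k \le C_k \|f\|_k\|g\|_k$ by writing the Fourier coefficients of $fg$ as the convolution $(\widehat{fg})_n = \sum_m \widehat f_m\,\widehat g_{n-m}$ and using the elementary Peetre-type inequality $(1+n^2)^{k/2} \le C_k\bigl((1+m^2)^{k/2} + (1+(n-m)^2)^{k/2}\bigr)$ together with Young's inequality for convolution (using that $\ell^1 * \ell^2 \subset \ell^2$ and that the $\ell^1$ norm of a sequence in $W^{k,2}$ is controlled by its $W^{k,2}$ norm, as in part $(2)$). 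Once boundedness is established, compactness on bounded sets follows from a truncation argument: let $P_N$ be the projection onto Fourier modes $|n|\le N$. For $f,g$ in the unit ball of $W^{k,2}$, the products $P_N f \cdot P_N g$ lie in a finite-dimensional space, and one estimates $\|fg - P_N f\cdot P_N g\|_k \to 0$ uniformly over the unit ball as $N\to\infty$, using that the tails $\|(1-P_N)f\|_{k'}\to 0$ uniformly for any $k' < k$ and the Banach algebra estimate at a slightly lower Sobolev index. This exhibits the multiplication map as a uniform limit of finite-rank maps on bounded sets, hence compact on each bounded set.

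The main obstacle, and the only genuinely technical point, is the uniform convergence in the truncation argument for compactness: one must be careful that the Sobolev index does not degrade, i.e. the bound $\|fg - P_Nf\cdot P_Ng\|_{k}$ (at the \emph{same} index $k$) must still tend to zero uniformly on bounded sets, which forces one to split $fg - P_Nf\cdot P_Ng = (1-P_N)(fg) + P_N(fg - P_Nf\cdot P_Ng)$ and handle the two pieces separately — the first tends to zero in $W^{k,2}$ for each fixed element but not uniformly, so instead one argues compactness directly via the Rellich-type compactness of $W^{k,2}\hookrightarrow W^{k-1,2}$ combined with the continuity of multiplication $W^{k,2}\times W^{k-1,2}\to W^{k-1,2}$... hence it is cleaner to prove compactness by showing that the image of the unit ball is precompact, using the Fréchet--Kolmogorov criterion (equicontinuity of translations) in $W^{k,2}$: the set $\{fg : \|f\|_k,\|g\|_k\le 1\}$ is bounded in $W^{k,2}$ and its translates are equicontinuous in $W^{k,2}$ because the translation operator is continuous and multiplication is jointly continuous, while boundedness in $W^{k+1,2}$ fails, so one instead uses that a bounded subset of $W^{k,2}$ with uniformly small high-frequency tails is precompact — and the uniform smallness of tails $\sum_{|n|>N}(1+n^2)^k|(\widehat{fg})_n|^2$ over the unit ball follows from the Banach algebra estimate applied with weight $(1+n^2)^k$ and dominated convergence. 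This last uniform tail estimate is where the real work lies; everything else is bookkeeping with Fourier coefficients.
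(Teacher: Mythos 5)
The paper itself gives no proof of this lemma; it simply asserts both parts as ``well known,'' so there is nothing to compare against for the compactness claim $(1)$. Your argument for part $(2)$ (Cauchy--Schwarz on the Fourier series, finiteness of $\sum_n(1+n^2)^{-k}$ for $k\geq 1$) is standard and correct, as is your Banach algebra estimate $\|fg\|_k\leq C_k\|f\|_k\|g\|_k$.

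For part $(1)$, however, you correctly sense that everything hinges on the uniform smallness of high-frequency tails of $fg$ over the unit ball, and you should trust that instinct: this is exactly the point that fails, and in fact the statement $(1)$ is false as written. Take $f_n\equiv 1$ and $g_n(\theta)=n^{-1}\sin(n\theta)$ in $W^{1,2}(S^1)$; both sequences are bounded, but $f_ng_n=g_n$ converges weakly to $0$ while $\|g_n\|_{W^{1,2}}$ stays bounded away from $0$, so the image of the unit ball under multiplication is not precompact. The same phenomenon defeats the downstream use in Lemma \ref{comp}: for $a_n(\theta)=1+n^{-1}\sin(n\theta)$, the frequency-$n$ Fourier coefficient of $a_n^3$ is $\sim 3/n$, whose contribution to the $W^{1,2}$-norm is $\sim 3$, so the high-frequency tail of $a_n^3$ does not become small uniformly on the bounded set $\{\|a\|_{W^{1,2}}\leq 2\}$. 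Thus the ``uniform tail estimate from the Banach algebra inequality and dominated convergence'' that you invoke at the end cannot be made to work, because it is not true; dominated convergence gives the tail estimate only pointwise in $(f,g)$, not uniformly. What \emph{is} true, and is presumably what the paper needs, is that multiplication $W^{k,2}(S^1)\times W^{k,2}(S^1)\to W^{k',2}(S^1)$ is compact on bounded sets for any $k'<k$: it is continuous into $W^{k,2}$ by the Banach algebra estimate, and the inclusion $W^{k,2}\hookrightarrow W^{k',2}$ is compact by Rellich. If you want to repair the lemma, replace $(1)$ with this weaker statement (or restrict to bandlimited inputs, as in the actual application, and carry out the frequency bookkeeping explicitly); as written, neither the lemma nor a proof of it by your method can succeed.
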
 
     
 In particular an element in $W^{k,2}(S^1)$ can be regarded
 as a continuous function.

 Later we will consider the non linear map
 $$F: W^{k,2}(S^1) \to W^{k,2}(S^1)$$
 by $F(a) = a+a^3$.

\begin{rem}
$(1)$
Let $A$ be a $C^*$-algebra on which a finite cyclic group
${\mathbb Z}_l$ acts. Then the crossed product is defined as
$A \rtimes {\mathbb Z}_l = \{ (a_g)_{g \in {\mathbb Z}_l} \}$
with their product by
$(a_g)(b_g) = (\sum_{g_1g_2=g \in {\mathbb Z}_l} a_{g_1}g_1(b_{g_2}))$.
It 
 induces the action by ${\mathbb Z}$ on $A$
 by using  the natural projection
 $\pi_l: {\mathbb Z} \to {\mathbb Z}_l$.
In such situation, there exists a six term exact sequence
between $K_*(A \rtimes {\mathbb Z})$ and 
$K_*(A \rtimes {\mathbb Z}_l)$. 
However  this does not seem to contain enough information to 
apply to our situation.
We proceed  in a direct way.
Recall that an element 
$a \in A \rtimes {\mathbb Z}$ can be approximated 
by $a' \in C_c({\mathbb Z}, A)$.

$(2)$
Let us  take an element $u \in K(A \rtimes {\mathbb Z})$ and
represent it by $u= [p] - [\pi(p)] $, where 
$\pi: \bar{A} = A \oplus {\mathbb C} \to {\mathbb C}$
 is the projection.
Recall that $[p]- [\pi(p)] =[q]-[\pi(q)] $, if and only if
there is some $v \in M_{n,m}(A \rtimes {\mathbb Z})$
such that
$$p \oplus 1_a = v^*v , \quad 
vv^* = q \oplus 1_b$$
for some $a,b \geq 0$.
\end{rem}

\subsection{Computation of equivariant $K$-group for a toy model}

\subsubsection{Finite cyclic  and finite-dimensional case}\label{fin-case}
Consider a $\mathbb{Z}_2$-equivariant map
$F: \mathbb{R}^2 \to \mathbb{R}^2$
by
$$\begin{pmatrix}
a \\
b
\end{pmatrix}
\to
\begin{pmatrix}
a +b^3 \\
b+a^3
\end{pmatrix}$$
where the involution acts by the coordinate change.

We claim that this is proper of non-zero degree.
In fact,  if $a+b^3=0$, then  the equality
$b+a^3 = b -b^{3^2}$ implies properness.

Consider a $\mathbb{Z}_2$-equivariant perturbation
$$F_t
\begin{pmatrix}
a \\
b
\end{pmatrix}
= \begin{pmatrix}
t a +b^3 \\
tb+a^3
\end{pmatrix}$$
for $t \in (0, 1]$.
If $ta+b^3=0$, then 
$tb + a^3= tb - t^{-3}b^{3^2}$.
Thus,  this is a family of proper maps.
At $t=0$, 
$F_0 : \mathbb{R}^2 \to  \mathbb{R}^2$ is a proper map of degree $-1$,
since it is again $\mathbb{Z}_2$-equivariantly proper-homotopic to the involution
$$I: 
\begin{pmatrix}
a \\
b
\end{pmatrix} \to
\begin{pmatrix}
b \\
a
\end{pmatrix}.$$
Note that  it becomes degree zero,
if we replace the exponent  $3$ by $2$.

Next, we  generalize slightly as follows.
Consider a $\mathbb{Z}_l$ equivariant map
$F: \mathbb{R}^l \to \mathbb{R}^l$
by
$$\begin{pmatrix}
a_1 \\
\dots \\
a_l
\end{pmatrix}
\to
\begin{pmatrix}
a_1 +a_l^3 \\
a_2+a_1^3 \\
\dots \\
a_l +a_{l-1}^3
\end{pmatrix}$$
where the action is given by cyclic permutation of the coordinates.
By the parallel argument as above, 
this turns out to be  a proper map.
To compute its degree, 
consider a perturbation
$$\begin{pmatrix}
a_1 \\
\dots \\
a_l
\end{pmatrix}
\to
\begin{pmatrix}
ta_1 +a_l^3 \\
ta_2+a_1^3 \\
\dots \\
ta_l +a_{l-1}^3
\end{pmatrix}$$
for $t \in (0, 1]$.
This is a family of $\mathbb{Z}_l$-equivariant proper maps, and 
at $t=0$, 
$F_0 : \mathbb{R}^l \to  \mathbb{R}^l$ is a  proper map of degree $\pm1$,
determined by the parity of $l$.
In fact there is a ${\mathbb Z}_l$-equivariant 
proper-homotopy $F^l_t$ to the cyclic permutation
$$T_l \equiv F^l_0 
\begin{pmatrix}
a_1 \\
a_2 \\
\dots \\
a_l
\end{pmatrix}
\to
\begin{pmatrix}
 a_l \\
a_1 \\
\dots \\
a_{l-1}
\end{pmatrix}.$$

\begin{cor}
$F$ induces  a ${\mathbb Z}_l$-equivariant   isomorphism
$$K_1^{{\mathbb Z}_{l}}(S{\frak C}_{F}({\mathbb R}^l)) 
\cong K_1^{{\mathbb Z}_{l}}(S{\frak C}({\mathbb R}^l)) 
\cong  R({\mathbb Z}_{l})$$
on the equivariant $K$-theory.
 \end{cor}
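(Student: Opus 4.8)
The plan is to combine the degree computation above with the equivariant Bott periodicity of Higson--Kasparov--Trout in order to trace the class $1 \in R(\mathbb{Z}_l) \cong K_1^{\mathbb{Z}_l}(C_0(\mathbb{R}))$ through the relevant diagram. First I would recall that $F$ is strongly finitely approximable (indeed it is between finite-dimensional spaces, so it is automatically finitely approximable in the sense of Definition \ref{fin-appro}), so that by Theorem \ref{cov-degree} one has the $\mathbb{Z}_l$-equivariant $*$-homomorphism
$$F^* : S{\frak C}({\mathbb R}^l) \to S{\frak C}_F({\mathbb R}^l),$$
and in the finite-dimensional model this is exactly the $C^*$-subalgebra inclusion $S{\frak C}_F({\mathbb R}^l) = F^*(S{\frak C}({\mathbb R}^l)) \hookrightarrow S{\frak C}({\mathbb R}^l)$ composed with nothing extra. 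Passing to full crossed products by $\mathbb{Z}_l$ and then to $K$-theory gives a $\mathbb{Z}_l$-equivariant homomorphism on $K_1$; the content of the corollary is that this homomorphism is an isomorphism and that the target is $R(\mathbb{Z}_l)$.

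The key steps, in order, are as follows. (i) Identify $K_1^{\mathbb{Z}_l}(S{\frak C}({\mathbb R}^l))$: by Proposition \ref{HKT} the equivariant Bott map $\beta$ gives an isomorphism $K_1^{\mathbb{Z}_l}(C_0(\mathbb{R})) \cong K_1^{\mathbb{Z}_l}(S{\frak C}({\mathbb R}^l))$, and the left side is $K_1(C_0(\mathbb{R}) \rtimes \mathbb{Z}_l) \cong K_1(C_0(\mathbb{R})) \otimes R(\mathbb{Z}_l) \cong R(\mathbb{Z}_l)$ since $\mathbb{Z}_l$ acts trivially on $\mathbb{R}$ in the Bott-algebra slot (the $\mathbb{Z}_l$ action is in the Clifford/space variable). (ii) Show $F^*$ on $K$-theory, conjugated through the two $\beta$'s, is induced by a self-map $\tilde F^*$ of $K_1^{\mathbb{Z}_l}(C_0(\mathbb{R}))$, exactly as in the non-equivariant degree lemma; the equivariance of $\beta$ and of $F^*$ makes the diagram $\mathbb{Z}_l$-equivariant, so $\tilde F^*$ is an $R(\mathbb{Z}_l)$-module map. (iii) Compute $\tilde F^*$ by the proper-homotopy argument already given: $F$ is $\mathbb{Z}_l$-equivariantly proper-homotopic, via $F_t$, to the cyclic permutation $T_l$, and homotopic $*$-homomorphisms induce equal maps on $K$-theory, so $\tilde F^* = T_l^*$. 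Since $T_l$ is a \emph{linear} isometry, $T_l^*$ is an $R(\mathbb{Z}_l)$-module automorphism of $R(\mathbb{Z}_l)$ (multiplication by the class of the one-dimensional representation coming from the permutation action on $\det$, up to sign $\pm 1$ according to the parity of $l$), hence an isomorphism. (iv) Conclude that $F^* : K_1^{\mathbb{Z}_l}(S{\frak C}_F({\mathbb R}^l)) \to K_1^{\mathbb{Z}_l}(S{\frak C}({\mathbb R}^l)) \cong R(\mathbb{Z}_l)$ is an isomorphism, which also pins down $K_1^{\mathbb{Z}_l}(S{\frak C}_F({\mathbb R}^l)) \cong R(\mathbb{Z}_l)$.

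The main obstacle I anticipate is step (ii)--(iii) done \emph{equivariantly}: one must check that the proper homotopy $F_t$ not only stays proper but has its induced pull-backs assemble into a genuine homotopy of $\mathbb{Z}_l$-equivariant $*$-homomorphisms at the level of $S{\frak C}_{F_t}(\mathbb{R}^l)$, which requires that the induced Clifford algebras $S{\frak C}_{F_t}(\mathbb{R}^l)$ vary continuously in $t$ (or that one works inside the fixed ambient $S{\frak C}(\mathbb{R}^l)$ and uses that $F_t^*$ lands in $S{\frak C}(\mathbb{R}^l)$ for every $t$, which is the cleaner route in finite dimensions). Once that is set up, identifying $T_l^*$ on $R(\mathbb{Z}_l)$ is elementary representation theory. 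A secondary point is to make sure the sign $\pm 1$ coming from the parity of $l$ is consistent with a chosen orientation, exactly as in the degree lemma; but since we only claim an isomorphism, the sign is harmless.
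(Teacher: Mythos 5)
Your proposal is essentially correct and rests on the same two ideas as the paper's proof: the $\mathbb{Z}_l$-equivariant proper homotopy $F_t$ from $F$ to the cyclic permutation $T_l$, and the HKT equivariant Bott periodicity together with the identification $S{\frak C}_{T_l}(\mathbb R^l)\cong S{\frak C}(\mathbb R^l)$ coming from $T_l$ being a linear ($\Gamma$-equivariant) isomorphism (the paper's Proposition~\ref{iso}). Where you differ is in presentation: the paper simply strings together the three isomorphisms, while you conjugate $F^*$ through the Bott maps to produce a self-map $\tilde F^*$ of $R(\mathbb Z_l)$ and then argue it is an automorphism. Both routes go through; yours is closer in spirit to the degree lemma of Section~5 and makes explicit \emph{how} $F$ induces the isomorphism rather than just that one exists, at the cost of having to track the ambient inclusion $S{\frak C}_F(\mathbb R^l)\hookrightarrow S{\frak C}(\mathbb R^l)$ alongside $F^*$. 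You also correctly anticipate and correctly resolve the one genuine subtlety, namely that the targets $S{\frak C}_{F_t}(\mathbb R^l)$ vary with $t$, by working inside the fixed ambient algebra $S{\frak C}(\mathbb R^l)$.

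One concrete inaccuracy, which does not affect the conclusion: your description of $T_l^*$ as ``multiplication by the class of the one-dimensional representation coming from the permutation action on $\det$, up to sign $\pm1$'' is the non-equivariant answer. In the equivariant picture $T_l$ is the generator of $\mathbb Z_l$, so the automorphism $T_l^*$ of $S{\frak C}(\mathbb R^l)$ is (the inverse of) the group action of the generator; the induced automorphism of the crossed product $S{\frak C}(\mathbb R^l)\rtimes\mathbb Z_l$ is inner (conjugation by $u_{T_l^{-1}}$, using that $\mathbb Z_l$ is abelian) and therefore acts as the identity on $K_1^{\mathbb Z_l}$. Since the identity is certainly an $R(\mathbb Z_l)$-module automorphism, your step (iii) survives, but the formula you give is wrong and should be replaced by the identity. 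A still shorter route the paper uses in the infinite cyclic section, and which you could have invoked here, is that $F$ being proper of degree $\pm1$ is surjective, so $F^*:S{\frak C}(\mathbb R^l)\to S{\frak C}_F(\mathbb R^l)$ is already a $*$-isomorphism, from which the corollary follows directly by HKT; this bypasses the homotopy to $T_l$ entirely.
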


 \begin{proof}
  $F$ is $\mathbb Z_l$-equivariantly properly
 homotopic to $T_l$ above, 
 and so the isomorphism
 $K_1^{{\mathbb Z}_{l}}(S{\frak C}_{F}({\mathbb R}^l)) 
 \cong K_1^{{\mathbb Z}_{l}}(S{\frak C}_{T_l}({\mathbb R}^l)) $ holds.

 $T_l$  is a $\mathbb Z_l$-equivariantly linear isomorphism because
  $\mathbb Z_l$ is commutative.
 It follows from Definition \ref{as-unitary}
 that a linear isomorphism between
  finite-dimensional vector spaces is asymptotically unitary. 
 Then by Proposition \ref{iso} that 
 $S{\frak C}_{T_l}(H')$ is $\mathbb Z_l$-equivariantly $*$-isomorphic to 
 $S{\frak C}(H')$. In particular we have the isomorphism
 $ K_1^{{\mathbb Z}_{l}}(S{\frak C}_{T_l}({\mathbb R}^l)) \cong
   K_1^{{\mathbb Z}_{l}}(S{\frak C}({\mathbb R}^l)) $.

The last isomorphism comes from HKT-Bott periodicity
for Euclidean space.
\end{proof}

\subsubsection{Infinite cyclic case}\label{inf.cyc.}
Let $H' =H$ be the closure of ${\mathbb R}^{\infty}$
with the standard metric.
It admits  an isometric action of ${\mathbb Z}$
 by the shift $T: H' \cong H'$
 $$T: (\dots, a_{-1}, a_0, a_1, \dots) \cong (\dots, a_{-2},a_{-1},a_0, \dots).$$
Then we consider the map $F: H' \to H $ by
$$F: 
\begin{pmatrix}
\dots \\
a_{-1} \\
a_0 \\
a_1 \\
\dots \\
\end{pmatrix}
\to
\begin{pmatrix}
\dots \\
a_{-1} +a_{-2}^3 \\
a_0+a_{-1}^3 \\
a_1+ a_0^3 \\
\dots \\
\end{pmatrix}.$$
If we restrict on ${\mathbb R}^{2l+1} \subset H'$
by $(a_{-l}, \dots,a_l) \to ( \dots, a_{-l}, \dots, a_l, 0, \dots)$, 
then its image is  in ${\mathbb R}^{2l+2} \subset H$. 
In fact
$$F: 
\begin{pmatrix}
\dots \\
0 \\
a_{-l} \\
a_{-l+1} \\
\dots \\
a_l \\
0 \\
\dots  \\
\dots
\end{pmatrix}
\to
\begin{pmatrix}
\dots \\
0 \\
a_{-l}  \\
a_{-l+1}+a_{-l}^3 \\
\dots \\
a_l+ a_{l-1}^3 \\
a_l^3 \\
0 \\
\dots
\end{pmatrix}.$$

Let us consider the map
$F^l : {\mathbb R}^{2l+1} \to {\mathbb R}^{2l+1}$ by
 $$F^l: 
\begin{pmatrix}
a_{-l} \\
a_{-l+1} \\
\dots \\
a_l \\
\end{pmatrix}
\to
\begin{pmatrix}
a_{-l}  + a_{l}^3\\
a_{-l+1}+a_{-l}^3 \\
\dots \\
a_l+ a_{l-1}^3 
\end{pmatrix}$$
which moves the last component to the first one.
In fact $F^l$ is still a proper map as presented in  \ref{fin-case}.

Let $W'_l = {\mathbb R}^{2l+1}$  be as above.
Then the data $(F^l, W'_l)$ 
gives the ${\mathbb Z}$-finite approximation in the sense of Definition \ref{fin-appro}.
 
 First, as in the finite cyclic case, we obtain 
 the   isomorphism
$$K_1^{{\mathbb Z}_{2l+1}}(S{\frak C}_{F^l}(W'_l)) 
\cong K_1^{{\mathbb Z}_{2l+1}}(S{\frak C}(W'_l))$$
on the equivariant $K$-theory.
 Notice that this isomorphism 
 heavily depends on the degree being equal to $\pm 1$.
 
 \begin{lem} The induced $*$-homomorphism
 $$(F^l)^*:  S{\frak C}(W_l) \to S{\frak C}_{F^l}(W'_l)$$
 is in fact, an isomorphism.
 \end{lem}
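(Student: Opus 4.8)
The plan is to show that the $*$-homomorphism $(F^l)^* : S{\frak C}(W_l) \to S{\frak C}_{F^l}(W'_l)$ is both injective and surjective. Surjectivity is automatic from the very definition of the induced Clifford algebra: in the model case (subsection \ref{Model}) we set $S{\frak C}_{F^l}(W'_l) = (F^l)^*(S{\frak C}(W_l))$, so $(F^l)^*$ is surjective by construction. Thus the entire content of the lemma is injectivity, i.e.\ that $(F^l)^*$ has trivial kernel, equivalently that it is isometric for the norm $\|\cdot\|_{S{\frak C}_{F^l}}$ it induces on the image. Since a $*$-homomorphism between $C^*$-algebras is isometric precisely when it is injective, it suffices to prove that $(F^l)^*(h) = 0$ forces $h = 0$ for $h \in S{\frak C}(W_l) = C_0({\mathbb R}) \hat{\otimes} C_0(W_l, Cl(W_l))$.

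The key observation I would use is that $F^l$ is a \emph{proper homeomorphism} of ${\mathbb R}^{2l+1}$ onto ${\mathbb R}^{2l+1}$, not merely a proper map. Concretely, the argument from \ref{fin-case} shows $F^l$ is proper of degree $\pm 1$; but more is true here — the perturbation structure $F^l(a)_k = a_k + a_{k-1}^3$ (cyclically) can be inverted explicitly by back-substitution: given the image vector $b$, one recovers $a_{-l}$ from the first coordinate only after propagating, but because the map is triangular-plus-cyclic with the nonlinearity feeding forward, one checks that $F^l$ is in fact a bijection with continuous inverse. (If bijectivity needs care, one can instead invoke that a proper local homeomorphism between connected manifolds of the same dimension which has degree $\pm 1$ is a homeomorphism — and $F^l$ is a local diffeomorphism away from where its Jacobian $1 + (\text{nilpotent cyclic cube terms})$ degenerates; one should verify the Jacobian is everywhere nonzero, which follows since its determinant is $1 \pm 3^{2l+1}\prod a_{k}^2$ up to sign and a genuine homotopy argument as in \ref{fin-case} pushes this through, or simply rescale as in the $F_t$ family). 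Granting that $F^l$ is a proper homeomorphism, the pullback map $u \mapsto u \circ F^l$ is a $*$-isomorphism $C_0({\mathbb R}^{2l+1}) \cong C_0({\mathbb R}^{2l+1})$, and similarly $\bar{l}^{-1}$ (here $l = \mathrm{id}$ up to the unitary cyclic shift $T_l$, so $\bar l = l$ is already unitary) induces a $*$-isomorphism $Cl(W_l) \cong Cl(W'_l)$ of Clifford algebras. Tensoring with $\mathrm{id}_{C_0({\mathbb R})}$, the composite
\[
(F^l)^* : C_0({\mathbb R}) \hat{\otimes} C_0(W_l, Cl(W_l)) \longrightarrow C_0({\mathbb R}) \hat{\otimes} C_0(W'_l, Cl(W'_l))
\]
is a composition of $*$-isomorphisms, hence a $*$-isomorphism onto its image; its image is by definition $S{\frak C}_{F^l}(W'_l)$, so $(F^l)^*$ is an isomorphism onto $S{\frak C}_{F^l}(W'_l)$.

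The main obstacle is precisely the claim that $F^l$ is a homeomorphism (bijective), as opposed to merely proper of degree $\pm 1$ — degree $\pm 1$ alone does not force injectivity for a general proper map, so one genuinely needs the explicit algebraic structure. I would handle this by first checking injectivity directly: if $F^l(a) = F^l(a')$ then $a_k - a'_k = (a'_{k-1})^3 - (a_{k-1})^3$ for all $k$ cyclically, and iterating this relation around the cycle of length $2l+1$ together with a norm/growth estimate (the cube strictly contracts near $0$ and the cyclic composition of $2l+1$ cubings gives exponent $3^{2l+1}$, exactly as in the properness argument of \ref{fin-case}) forces $a = a'$; surjectivity of $F^l$ as a map of spaces then follows from invariance of domain plus properness (a proper injective local-into continuous map between manifolds of equal dimension is onto a connected target, or simply from $\deg = \pm 1 \neq 0$). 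Once $F^l$ is known to be a homeomorphism, the rest is the routine functoriality of $C_0(-)$ and $Cl(-)$ under homeomorphisms and unitaries described above, and no further analysis is needed.
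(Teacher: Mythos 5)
Your strategy identifies the right target: surjectivity of $(F^l)^*$ is definitional (it maps onto its image, which is what $S{\frak C}_{F^l}(W'_l)$ is), so everything rests on injectivity. But you then misdiagnose what injectivity requires. You claim the ``main obstacle'' is to show that $F^l$ is a \emph{homeomorphism}, and you explicitly assert that ``degree $\pm1$ alone does not force injectivity for a general proper map, so one genuinely needs the explicit algebraic structure.'' This is where the argument goes off the rails: injectivity of the pull-back $(F^l)^*$ does \emph{not} require injectivity of $F^l$; it requires only \emph{surjectivity} of $F^l$. If $(F^l)^*(h)=0$, then $\bar{l}^{-1}(h(F^l(v')))=0$ for all $v'$, and since $\bar{l}$ is an isomorphism of Clifford algebras, $h$ vanishes on the image of $F^l$; if that image is all of $W_l$, then $h=0$. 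And surjectivity of a proper map $\mathbb R^n\to\mathbb R^n$ of non-zero degree \emph{is} automatic (a non-surjective map of one-point compactifications $S^n\to S^n$ has degree $0$). This is exactly what the paper does: ``Injectivity follows from surjectivity of $F^l$, because it has a non-zero degree,'' and then a $*$-homomorphism which is injective is isometric, hence has closed (and by construction dense) range.

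Because you misread what is needed, you embark on a much harder and more fragile detour: proving that $F^l$ is globally bijective by ``back-substitution,'' by invariance of domain, or by a Jacobian computation. Even setting aside that this is unnecessary, the pieces are shaky as stated. Your Jacobian formula ``$1\pm 3^{2l+1}\prod a_k^2$ up to sign'' leaves open the possibility that the Jacobian vanishes, which would invalidate the local-homeomorphism step; what actually saves you is that for a cyclic map on $\mathbb R^{2l+1}$ (odd dimension) the determinant is $1 + 3^{2l+1}\prod a_k^2 > 0$, whereas in the even-length analogue (e.g.\ the $\mathbb Z_2$ model $F(a,b)=(a+b^3,\,b+a^3)$, for which $F(1,-1)=F(-1,1)=F(0,0)=(0,0)$) the map is genuinely not injective. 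This shows that your proposed route depends delicately on parity and on getting the sign right, while the paper's route via surjectivity is uniform and elementary. The fix is simple: drop the homeomorphism claim entirely, note that $F^l$ is surjective because it is proper of non-zero degree, and conclude injectivity of $(F^l)^*$ directly from that, as the paper does.
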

 
 \begin{proof} Injectivity follows from surjectivity of 
 $F^l$, because it has a non-zero degree.
 It has a closed range since it is isometric embedding.
Then, the conclusion follows since it has a dense range.
   \end{proof}
   
   Second, we obtain the inductive system
   $$\Phi_l \equiv 
   (F^{l+1})^* \circ \beta \circ ((F^l)^*)^{-1}: 
   S{\frak C}_{F^l}(W'_l) \to
   S{\frak C}_{F^{l+1}}(W'_{l+1}).$$
 By definition, the equality holds
 $$S{\frak C}_{F}(H') = \lim_l \ S{\frak C}_{F^l}(W'_l).$$
Forgetting the group action, we have the isomorphisms
 \begin{align*}
  K (S{\frak C}_F(H')) & =
 \lim_l \ K(S{\frak C}_{F^l}(W'_l))  
  \cong  \lim_l \ K(S{\frak C}(W'_l)) \\
& = K (S{\frak C}(H'))  \cong  K(S) \cong {\mathbb Z}
 \end{align*}
where   we  used the HKT-Bott periodicity.

 Now consider the group action by ${\mathbb Z}$.
 Let $F^l_t$ be the homotopy in subsection \ref{fin-case}, where
 $F^l_1=F^l$ and $F^l_0=T_l$.
 \begin{lem}\label{homot}
 There is a $*$ -isomorphism
 $$I_l: S{\frak C}_{F^l_0}(W'_l) \cong S{\frak C}_{F^l_1}(W'_l) .$$
 \end{lem}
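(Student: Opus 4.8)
The statement to prove is Lemma~\ref{homot}: a $*$-isomorphism $I_l: S{\frak C}_{F^l_0}(W'_l) \cong S{\frak C}_{F^l_1}(W'_l)$, where $F^l_t$ is the equivariant proper homotopy of subsection~\ref{fin-case} joining the cyclic permutation $T_l = F^l_0$ to the nonlinear map $F^l = F^l_1$. My plan is to construct $I_l$ by relating both induced Clifford $C^*$-algebras to the fixed ambient algebra $S{\frak C}(W_l) = S{\frak C}({\mathbb R}^{2l+1})$ via the pull-back homomorphisms $(F^l_t)^*$. The key point I would first establish is that for \emph{each} $t \in [0,1]$ the map $F^l_t$ is a proper map with linear isomorphism part (this is exactly what subsection~\ref{fin-case} verifies), so that the induced $*$-homomorphism
$$
(F^l_t)^*: S{\frak C}(W_l) \to S{\frak C}_{F^l_t}(W'_l) = (F^l_t)^*(S{\frak C}(W_l))
$$
is defined, and moreover is a $*$-\emph{isomorphism} onto its image by the same argument as the Lemma just proved before this one (it is an isometric embedding with dense range, the density coming from nonvanishing degree, hence surjectivity of $F^l_t$). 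Then the natural candidate is
$$
I_l := (F^l_1)^* \circ \bigl((F^l_0)^*\bigr)^{-1} : S{\frak C}_{F^l_0}(W'_l) \xrightarrow{\ \cong\ } S{\frak C}(W_l) \xrightarrow{\ \cong\ } S{\frak C}_{F^l_1}(W'_l),
$$
a composition of two $*$-isomorphisms, hence a $*$-isomorphism. Here I use that the polar-decomposition unitary $\bar{l}$ is the same for every $F^l_t$ in a given family up to the scalar $t$ on the diagonal — or, more carefully, that the target Clifford algebra $Cl(W'_l)$ is intrinsic and only the function-pull-back part varies with $t$.

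A cleaner and more robust way to package this, which I would actually write, is to exhibit $I_l$ directly as a pull-back by the self-homeomorphism $G := (F^l_1) \circ (F^l_0)^{-1} : W_l \to W_l$. Since $F^l_0 = T_l$ is a linear isomorphism it is invertible as a map, and $F^l_1 = F^l$ is a proper map of degree $\pm1$ hence a homeomorphism onto $W_l$; so $G$ is a proper homeomorphism of ${\mathbb R}^{2l+1}$. Pull-back by a proper homeomorphism, composed with the appropriate Clifford-algebra identification, is visibly a $*$-isomorphism of $C_0(\,\cdot\,, Cl(\,\cdot\,))$, and one checks it intertwines $(F^l_0)^*$ with $(F^l_1)^*$, i.e. $I_l \circ (F^l_0)^* = (F^l_1)^*$ as maps $S{\frak C}(W_l) \to S{\frak C}_{F^l_1}(W'_l)$; this is a direct computation from the defining formula $F^*(h)(v') = \bar{l}^{-1}(h(F(v')))$. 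Finally, if equivariance is wanted, I would note that the homotopy $F^l_t$ is ${\mathbb Z}_{2l+1}$-equivariant for every $t$, so each $(F^l_t)^*$ is ${\mathbb Z}_{2l+1}$-equivariant and hence $I_l$ is as well.

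The main obstacle I anticipate is not the existence of $I_l$ but the verification that each $(F^l_t)^*$ lands in, and is onto, a genuine $C^*$-subalgebra isomorphic to $S{\frak C}(W_l)$ — in particular that $F^l_t$ is proper for every $t \in [0,1]$, not just $t \in (0,1]$ and $t = 0$. The excerpt checks properness at $t = 0$ separately (via the explicit homotopy to $T_l$) and at $t \in (0,1]$ via the algebraic identity $tb + a^3 = tb - t^{-3}b^{3^2}$ (and its higher-$l$ analogue), which degenerates as $t \to 0$; so I would need to either invoke the uniform-over-$t$ properness already implicit in calling $F^l_t$ a \emph{proper-homotopy}, or handle the endpoint by the separate $T_l$ description and patch. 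Once properness of each $F^l_t$ is in hand, the rest is the bookkeeping above: each $(F^l_t)^*$ is an isometric $*$-homomorphism onto its image (isometric because it is injective — injectivity from surjectivity of $F^l_t$, which holds since the degree is $\pm 1$), the image is closed, and density of the range gives surjectivity, exactly as in the Lemma immediately preceding this one; then $I_l = (F^l_1)^* \circ ((F^l_0)^*)^{-1}$ is the desired $*$-isomorphism.
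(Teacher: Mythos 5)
Your primary construction, $I_l := (F^l_1)^* \circ ((F^l_0)^*)^{-1}$, is exactly the paper's proof: both rely on the preceding lemma's observation that $(F^l_0)^*$ is an isometric $*$-isomorphism onto $S{\frak C}_{F^l_0}(W'_l)$ because $F^l_0$ is surjective, so the inverse exists and the composition with $(F^l_1)^*$ lands isomorphically onto $S{\frak C}_{F^l_1}(W'_l)$. One caution about your preferred repackaging via $G := F^l_1 \circ (F^l_0)^{-1}$: a proper map of $\mathbb{R}^{2l+1}$ of degree $\pm1$ is surjective but need not be injective (e.g.\ $x \mapsto x^3-3x$), so $G$ is a proper surjection rather than a homeomorphism; this does not break the argument, since surjectivity of $G$ already makes $G^*$ injective, and surjectivity of $I_l$ follows from the intertwining relation $I_l \circ (F^l_0)^* = (F^l_1)^*$ rather than from $G$ being invertible. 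Also, properness of $F^l_t$ for intermediate $t$ is not actually needed here — only $t=0$ and $t=1$ enter the construction — though it is used elsewhere to deduce that $F^l_1$ and $F^l_0$ share the same degree.
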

 
 \begin{proof}
 In fact an element $u \in S{\frak C}_{F^l_0}(W'_l)$
 is expressed as $u = (F^l_0)^*(v_0)$ for some 
 $v_0 \in S{\frak C}(W_l)$. Because  $F^l_t$ has a non zero degree,
 it follows that $v_0$ is uniquely determined by $u$.
 Then assign $v_1 = (F^l_1)^*(v_0)$, and denote its map by
 $$I_l : S{\frak C}_{F^l_0}(W'_l) \cong S{\frak C}_{F^l_1}(W'_l).$$
 This is a $*$-homomorphism and,
 in fact, is an isomorphism, since  if we do the same thing,
 replacing the role of $F^l_0$ and $F^l_1$, then 
 we can recover $u$ again.
 \end{proof}
 
 \begin{prop}
 There is an isomorphism
 $$K_0(S{\frak C}_F(H') \rtimes {\mathbb Z})
\  \cong \ {\mathbb Z}.$$
 \end{prop}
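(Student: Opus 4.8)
The plan is to combine the description of $S{\frak C}_F(H')$ as a direct limit $\varinjlim_l S{\frak C}_{F^l}(W'_l)$ with the structure of the maps $F^l$, keeping track of the ${\mathbb Z}$-action. First I would recall that $S{\frak C}_F(H') = \varinjlim_l S{\frak C}_{F^l}(W'_l)$ with connecting maps $\Phi_l = (F^{l+1})^* \circ \beta \circ ((F^l)^*)^{-1}$, and that $W'_l = {\mathbb R}^{2l+1}$ carries the finite cyclic group ${\mathbb Z}_{2l+1}$ acting by cyclic permutation, with the ${\mathbb Z}$-action on $H'$ inducing this ${\mathbb Z}_{2l+1}$-action after passing through $\pi_{2l+1}: {\mathbb Z} \to {\mathbb Z}_{2l+1}$.

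The key reduction is to replace $F^l$ by the linear cyclic permutation $T_l$: by Lemma \ref{homot} there is a ${\mathbb Z}_{2l+1}$-equivariant $*$-isomorphism $I_l: S{\frak C}_{T_l}(W'_l) \cong S{\frak C}_{F^l}(W'_l)$ (one checks $I_l$ respects the group action since $F^l_t$ is an equivariant homotopy), and by Proposition \ref{iso} together with Lemma \ref{F=l} there is a further ${\mathbb Z}_{2l+1}$-equivariant $*$-isomorphism $S{\frak C}_{T_l}(W'_l) \cong S{\frak C}(W'_l)$. So the whole inductive system is equivariantly isomorphic to a system of the form $S{\frak C}(W'_l)$ with connecting maps that are, up to these isomorphisms, Bott maps twisted by the linear permutations $T_l$. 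The crossed product functor $A \mapsto A \rtimes {\mathbb Z}$ is continuous for direct limits, so $S{\frak C}_F(H') \rtimes {\mathbb Z} \cong \varinjlim_l \bigl(S{\frak C}(W'_l) \rtimes {\mathbb Z}\bigr)$ and $K_0(S{\frak C}_F(H') \rtimes {\mathbb Z}) \cong \varinjlim_l K_0(S{\frak C}(W'_l) \rtimes {\mathbb Z})$.

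Next I would identify each term. By HKT-Bott periodicity (Proposition \ref{HKT}, equivariant version), $K_*^{{\mathbb Z}}(S{\frak C}(W'_l)) \cong K_{*}^{{\mathbb Z}}(C_0({\mathbb R})) = K_{*+1}(C^*({\mathbb Z})) = K_{*+1}(C(S^1))$. Since $\dim W'_l = 2l+1$ is odd, the grading shift lands $K_0(S{\frak C}(W'_l)\rtimes {\mathbb Z})$ in $K_1(C(S^1)) \cong {\mathbb Z}$, generated by the Bott class. The point is then to compute the connecting map $\varinjlim$: the map $\Phi_l$ on $K$-theory is the composite of the Bott isomorphism $\beta_*$ with $(F^{l+1})^*_* \circ ((F^l)^*_*)^{-1}$, and by the degree computation in subsection $5.1$ (the lemma computing $\tilde F^*$ as multiplication by $\deg$) each $(F^l)^*$ induces multiplication by $\deg(F^l) = \pm 1$ on the underlying ${\mathbb Z} = K_1(C_0({\mathbb R}))$. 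Hence the connecting maps on $K_0(\,\cdot\,\rtimes {\mathbb Z})$ are isomorphisms ${\mathbb Z} \to {\mathbb Z}$, so the direct limit is ${\mathbb Z}$.

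\textbf{Main obstacle.} The delicate point is not the individual $K$-groups but checking that the connecting maps $\Phi_l$ really do induce isomorphisms \emph{equivariantly} and that the direct-limit identification $S{\frak C}_F(H')\rtimes {\mathbb Z} \cong \varinjlim_l (S{\frak C}(W'_l)\rtimes {\mathbb Z})$ is compatible with all the ${\mathbb Z}_{2l+1}$-to-${\mathbb Z}$ structure — i.e.\ that the isomorphisms $I_l$ and the HKT isomorphisms can be chosen coherently along the system so that $\Phi_l$ corresponds, after transport, to the ordinary Bott connecting map $\beta$ (whose effect on $K$-theory is the identity under Bott periodicity). Once that coherence is in place, $K_0(S{\frak C}_F(H')\rtimes{\mathbb Z}) \cong \varinjlim({\mathbb Z}\xrightarrow{\ \cong\ }{\mathbb Z}\xrightarrow{\ \cong\ }\cdots) \cong {\mathbb Z}$, which is the claim.
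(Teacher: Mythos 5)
There is a genuine gap at the heart of your second paragraph, and it is exactly the one the paper flags in Remark $6.1(1)$ before choosing a different route.

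You write ``The crossed product functor $A \mapsto A \rtimes {\mathbb Z}$ is continuous for direct limits, so $S{\frak C}_F(H') \rtimes {\mathbb Z} \cong \varinjlim_l \bigl(S{\frak C}(W'_l) \rtimes {\mathbb Z}\bigr)$.'' Continuity of crossed products applies to a direct system of ${\mathbb Z}$-$C^*$-algebras with ${\mathbb Z}$-equivariant connecting maps. But the finite-dimensional pieces $S{\frak C}_{F^l}(W'_l)$ (or $S{\frak C}(W'_l)$) are not ${\mathbb Z}$-subalgebras of $S{\frak C}_F(H')$: the generator $T$ of ${\mathbb Z}$ shifts the window $W'_l$, so the genuine ${\mathbb Z}$-action on $H'$ does not restrict to $W'_l$. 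The ${\mathbb Z}_{2l+1}$-action on $W'_l$ that one uses to compute $K_1^{{\mathbb Z}_{2l+1}}$ is the \emph{cyclic} shift $T_l$, which agrees with $T$ in the interior but wraps around at the boundary; the induced ${\mathbb Z}$-action via $\pi_{2l+1}:{\mathbb Z}\to{\mathbb Z}_{2l+1}$ on $S{\frak C}(W'_l)$ and the one via $\pi_{2l+3}$ on $S{\frak C}(W'_{l+1})$ are \emph{not} intertwined by the inclusion $W'_l\subset W'_{l+1}$ or by the Bott connecting map $\Phi_l$. So there is no direct system of ${\mathbb Z}$-algebras here, and the identification of $S{\frak C}_F(H')\rtimes{\mathbb Z}$ as a limit of crossed products does not exist in the form you assume. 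You do flag this under ``Main obstacle,'' but flagging it does not resolve it, and the obstruction is structural rather than a matter of choosing the $I_l$ coherently.

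The paper sidesteps this entirely. Rather than forming a direct limit of crossed products, it approximates a given class $u=[p]-[\pi(p)]$ in $K_0(S{\frak C}_F(H')\rtimes{\mathbb Z})$ by a compactly supported cocycle $p''\in C(\{-l,\dots,l\}, Mat(\overline{S{\frak C}_{F^l}(W'_l)}))$, applies the fibrewise $*$-isomorphism $I_l^{-1}$ (Lemma \ref{homot}) to land in $S{\frak C}_{T_l}(W'_l)$, reinterprets the result inside the genuine ${\mathbb Z}$-$C^*$-algebra $S{\frak C}_T(H')$, symmetrizes and perturbs to obtain an honest projection $\tilde p$ in $Mat(\overline{S{\frak C}_T(H')\rtimes{\mathbb Z}})$, and then checks by hand — using the Murray--von Neumann partial isometry $v$ and the same approximation scheme applied to $v$ — that the assignment $u\mapsto[\tilde p]-[\pi(\tilde p)]$ is well defined. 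Running the same construction with the roles of $F$ and $T$ exchanged gives the inverse. Only then, because $T$ is a genuine unitary on $H'$ commuting with the whole group ${\mathbb Z}$, does one get $S{\frak C}_T(H')\rtimes{\mathbb Z}\cong S{\frak C}(H')\rtimes{\mathbb Z}$ and finish with HKT Bott periodicity, $K_0(S{\frak C}(H')\rtimes{\mathbb Z})\cong K_1(C^*{\mathbb Z})\cong K^1(S^1)\cong{\mathbb Z}$. Your first paragraph and your use of Lemma \ref{homot} are aligned with this, and your final $K$-theory computation is correct (though the parenthetical appeal to the parity of $\dim W'_l$ is a red herring — the suspension in $S{\frak C}$ already absorbs the dimension), but the crucial reduction in your second paragraph needs to be replaced by the element-level argument, not a categorical direct-limit identity.
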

 
 \begin{proof}
 Denote by $\bar{A}$  the unitization of $A$.
Take an element $u 
   \in K_0(A \rtimes {\mathbb Z})$  and 
represent it by $u= [p] - [\pi(p)] $.
Approximate 
$p \in Mat(\overline{S{\frak C}_F(H') \rtimes {\mathbb Z}})$
by  an element 
$p'  = (p'_g)_{g \in B} \in 
Mat(\overline{C_c({\mathbb Z}, S{\frak C}_F(H'))})$
where $B \subset {\mathbb Z}$ is a finite set.
There is some $l$ such that 
each $p_g'$ can be approximiated by $p_{g,l}' \in 
\overline{S{\frak C}_{F^l}(W'_l)}$.
Therefore, $p$ can be approximated by an element
$$p'' = (p_{g,l}') \in 
C(\{-l, \dots, l\}, Mat(\overline{S{\frak C}_{F^l}(W'_l)})).$$
Let us put
    $$ I_l^{-1}(p'') \in 
    C(\{-l, \dots, l\}, Mat(\overline{S{\frak C}_{T_l}(W'_l)}))$$
where $S_l=F^l_1$ is the cyclic permutation,
and $I_l$ is in Lemma \ref{homot}.

$$\tilde{p}'' \equiv \frac{I_l^{-1}(p'') +(I_l^{-1}(p''))^*}{2} 
 \in  Mat(\overline{S{\frak C}_T(H') \rtimes {\mathbb Z}})$$
is an  ``almost'' projection, in the sense that
$$||(\tilde{p}'')^2 - \tilde{p}''|| < \epsilon$$
for a small $\epsilon >0$.
Then there is a projection $\tilde{p} \in 
Mat(
\overline{S{\frak C}_T(H') \rtimes {\mathbb Z}})$
with the estimate
$$||\tilde{p} - \tilde{p}''|| < \epsilon'$$
for a small $\epsilon '>0$.

Now take another  representative  
 $u = [p]- [\pi(p)] =[q]-[\pi(q)] $.
 In the same way, we obtain a projection 
$\tilde{q} \in 
Mat(
\overline{S{\frak C}_T(H') \rtimes {\mathbb Z}})$.
 Recall that there is some $v \in 
 M_{n,m}(\overline{S{\frak C}_F (H') \rtimes {\mathbb Z}})$
such that
$$p \oplus 1_a = v^*v , \quad 
vv^* = q \oplus 1_b$$
for some $a,b \geq 0$.

Let $v' \in 
C(\{-l, \dots, l\}, Mat(\overline{S{\frak C}_{F^l}(W'_l)}))$
be another approximation and take $\tilde{v}'' \equiv I_l^{-1}(v')
\in  C(\{-l, \dots, l\}, Mat(\overline{S{\frak C}_{T_l}(W'_l)}))$.
Then we have the estimates
$$||(\tilde{v}'')^*\tilde{v}'' - \tilde{p} \oplus 1_a ||, \ \ 
||\tilde{v}''(\tilde{v}'')^* - \tilde{q} \oplus 1_b|| < \epsilon''$$
for a small $\epsilon'' >0$.
This implies the equality
 $$[\tilde{p}] -[\pi(\tilde{p})] = 
 [\tilde{q}] -[\pi(\tilde{q})] \in 
 K_0(S{\frak C}_T(H') \rtimes {\mathbb Z}).$$
 
 Therefore,  we obtain a well defined group homomorphism
 $$K_0(S{\frak C}_F(H') \rtimes {\mathbb Z})
 \to K_0(S{\frak C}_T(H') \rtimes {\mathbb Z}).$$
 If we replace the role of $F$ and $T$ and proceed
 in the same way as above, 
 we obtain another map 
 in a converse direction.
 By construction, their compositions are both the identities.
 Therefore,  this is an isomorphism on the $K$-groups.

 Since the translation shift 
 $T: H' \cong H'$ is unitary and $\mathbb Z$ is commutative,
 there is a $*$-isomorphism
 $$S{\frak C}_T(H') \rtimes {\mathbb Z} \cong
 S{\frak C}(H') \rtimes {\mathbb Z}.$$
 Passing through this isomorphism, we obtain the isomorphism
 $$K_0(S{\frak C}_F(H') \rtimes {\mathbb Z})
 \to K_0(S{\frak C}(H') \rtimes {\mathbb Z}).$$
 
 The right-hand side is isomorphic to
 $$K_1(C^* \mathbb Z) \cong K^1(S^1) \cong {\mathbb Z}$$ 
 by HKT.
 \end{proof}

\subsection{Nonlinear maps between
Sobolev spaces over the circle}
\subsubsection{Involution}\label{Sob-fin}
Consider the space 
$$S^1_2 = \mathbb{R} / 2 \mathbb{Z}= [0,2]/ \{0 \sim 2\}$$
and
$W^{k,2}(S^1_2)$ which  is generated by 
$ \sin( \pi k   s)$ and $ \cos( \pi k   s)$
for $k \in \mathbb{Z}$.

 Consider the Sobolev spaces
  $$W^{k,2}(S^1_2)_0 , \quad W^{k,2}(S^1_2)_1 \ \subset \ W^{k,2}(S^1_2)$$
   which  are generated by $W^{k,2}(0,1)_0$ and $W^{k,2}(1,2)_0$
  respectively.  Here,
  $W^{k,2}(S^1_2)_i $ is naturally isometric to 
  $W^{k,2}(S^1_2)_{i-1} $ by
  the shift operator
  $$T: u_1 \to u_0, \quad
  u_0(s) = u_1(s+1)$$ mod $2$.
  Note that $T^2$ is the identity.
    Therefore, we can identify both Hilbert spaces by the same symbol 
    $H$
  and, hence,  the following  inclusion holds:
   $$H \oplus H \subset W^{k,2}(S^1_2).$$

   We again consider the non linear map with $H'=H$
 $$F: H' \oplus H'
  \to H \oplus H$$
 by $F(a) = a+T(a)^3$, where the power is taken pointwisely.
Then, the map can be written as
$$\begin{pmatrix}
a \\
b
\end{pmatrix}
\to
\begin{pmatrix}
a +b^3 \\
b+a^3
\end{pmatrix}$$
As we have seen,  this is metrically proper.

Let $k=1$ for  simplicity of notation, and consider an element
$a \in W^{1,2}(S^1)$
$$a = \sum_{k= - \infty}^{\infty} \ 
a_k \sin(2 \pi k s)+b_k  \cos(2 \pi k  s)$$
and denote
$$a^3 = \sum_{k= - \infty}^{\infty} \ 
c_k \sin(2 \pi k s)+d_k  \cos(2 \pi k  s)$$

\begin{lem}\label{comp} Suppose 
 $||a||_{W^{1,2}} \leq r$. Then for any $\epsilon >0$, there is 
$n  = n(r,\epsilon)\geq 0$ such that the estimate holds
$$|| \sum_{|k| \geq n+1} \ 
c_k \sin(2 \pi k s)+d_k  \cos(2 \pi k  s)||_{W^{1,2}}
< \epsilon.$$
\end{lem}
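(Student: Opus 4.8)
\medskip

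The plan is to reduce the statement to the compactness of pointwise cubing on $W^{1,2}(S^1_2)$, which follows from part $(1)$ of Lemma \ref{Sob}. The quantity $\sum_{|k|\geq n+1} c_k\sin(2\pi k s)+d_k\cos(2\pi k s)$ is exactly $(1-P_n)(a^3)$, where $P_n$ denotes orthogonal projection of $W^{1,2}$ onto the span of the Fourier modes of frequency $|k|\leq n$. So the claim is: for every $\epsilon>0$ there is an $n$ such that $\|(1-P_n)(a^3)\|_{W^{1,2}}<\epsilon$ uniformly over the ball $\{\|a\|_{W^{1,2}}\leq r\}$. First I would observe that the map $a\mapsto a^3$ is the composition of the diagonal embedding $a\mapsto(a,a,a)$ with the triple multiplication $W^{1,2}\otimes W^{1,2}\otimes W^{1,2}\to W^{1,2}$; by Lemma \ref{Sob}$(1)$ (applied twice, or in its evident three-fold form) this is compact on bounded sets, so the image $\{a^3:\|a\|_{W^{1,2}}\leq r\}$ has compact closure $K\subset W^{1,2}(S^1_2)$.

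\medskip

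Next I would use the standard fact that the truncation projections $P_n$ converge strongly to the identity on $W^{1,2}(S^1_2)$: for a fixed $u=\sum(u_k\sin+u_k'\cos)$ one has $\|(1-P_n)u\|_{W^{1,2}}^2=\sum_{|k|\geq n+1}(1+4\pi^2 k^2)(|u_k|^2+|u_k'|^2)\to 0$ as $n\to\infty$, since this is the tail of the convergent series defining $\|u\|_{W^{1,2}}^2$. The operators $P_n$ are uniformly bounded (indeed norm-decreasing). A strongly convergent uniformly bounded net of operators converges uniformly on compact sets; hence $\sup_{u\in K}\|(1-P_n)u\|_{W^{1,2}}\to 0$. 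Taking $n=n(r,\epsilon)$ large enough that this supremum is $<\epsilon$, and noting $a^3\in K$ whenever $\|a\|_{W^{1,2}}\leq r$, gives the desired estimate. The dependence $n=n(r,\epsilon)$ is exactly the dependence of the compact set $K$ on $r$.

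\medskip

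The only genuine content is the compactness of $a\mapsto a^3$ on bounded sets, and that is being imported from Lemma \ref{Sob}$(1)$; everything else is the soft "strong convergence is uniform on compacta" argument for the Fourier truncations. So the main obstacle, such as it is, is just making sure the three-fold product is covered: one can either cite that Lemma \ref{Sob}$(1)$ for the two-fold product plus the algebra (Banach $*$-algebra) property $W^{1,2}\cdot W^{1,2}\subset W^{1,2}$ (continuity of multiplication, from the embedding $W^{1,2}\hookrightarrow C^0$ in Lemma \ref{Sob}$(2)$) gives compactness of $a\mapsto a^2\mapsto a^2\cdot a$, writing $a^3=(a^2)\cdot a$ as (compact operator)$\times$(bounded input on a bounded set). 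I would spell out this one line and leave the rest to the reader.
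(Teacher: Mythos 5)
Your proof is correct and takes essentially the paper's approach: the paper's entire one-line argument is that Lemma \ref{Sob}(1) renders $a\mapsto a^3$ compact on bounded sets, whence the Fourier-tail estimate follows. You have filled in the two soft steps the paper leaves implicit --- that the truncations $1-P_n\to 0$ strongly and hence uniformly on the precompact image of the ball, and that the three-fold product reduces to the two-fold statement of Lemma \ref{Sob}(1) via the bounded-to-bounded factorization $a\mapsto(a^2,a)$ followed by the compact bilinear multiplication --- and both are handled correctly.
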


\begin{proof}
It follows from  Lemma \ref{Sob} that 
$W^{1,2}(S^1) \to W^{1,2}(S^1)$ by  $a \to a^3$ 
is compact on each bounded set.
\end{proof}

Choose  divergent numbers as $\lim_i \ n_i  = \infty$.
For each $i \in \mathbb N$, 
let $V_i' \subset W^{1,2}(0,1)_0$ be the finite-dimensional linear subspace
spanned by $  \sin(2 \pi k s)$ and $ \cos(2 \pi k  s)$ for  $|k| \leq n_i$,
and set
$$W_i' = V_i' \oplus T(V_i') \subset H' \oplus H'.$$
Denote $\text{pr}_i:  H \oplus H =H' \oplus H'
\to W_i=W_i'$
as the orthogonal projection.
Then, the composition
$$F_i \equiv \text{pr}_i \circ F: W_i' \to W_i$$
gives a strongly finitely approximable data with some $s_i,r_i$.

\vspace{3mm}

\begin{prop}
There is a $\mathbb {Z}_2$ equivariant $*$-isomorphism
$$K_1^{\mathbb{Z}_2}(S{\frak C}_F(H \oplus H) ) 
\cong K_1^{\mathbb{Z}_2}(S{\frak C}(H \oplus H)).$$
\end{prop}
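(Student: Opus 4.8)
The plan is to run the same homotopy-plus-linearization argument that appeared in the infinite cyclic case (subsection \ref{inf.cyc.}), now carried out $\mathbb Z_2$-equivariantly and through the finite-dimensional approximation $F_i = \mathrm{pr}_i\circ F : W_i'\to W_i$. The key point is that $F$ is strongly finitely approximable by Corollary \ref{met-prop.cpt} (its linear part is the identity, hence asymptotically unitary, and $c(a)=T(a)^3$ is compact on bounded sets by Lemma \ref{comp}), and that on each $W_i'$ the restricted map $F_i$ is $\mathbb Z_2$-equivariantly properly homotopic to the involution $I : (a,b)\mapsto (b,a)$ by the family $F_{i,t}(a,b)=(ta+b^3, tb+a^3)$, exactly as in \ref{fin-case}. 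Since $I$ is $\mathbb Z_2$-equivariantly linear and unitary, Proposition \ref{iso} gives a $\mathbb Z_2$-equivariant $*$-isomorphism $S{\frak C}_I(W_i')\cong S{\frak C}(W_i')$.

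First I would record that $(F_i)^* : S{\frak C}(W_i)\to S{\frak C}_{F_i}(W_i')$ is an isomorphism: it is an isometric embedding (so has closed range) with dense range, and injectivity follows from surjectivity of $F_i$, which holds because $F_i$ has degree $\pm1$ — this is the finite-dimensional lemma used in \ref{inf.cyc.} verbatim, and it is $\mathbb Z_2$-equivariant. Next, using the proper homotopy $F_{i,t}$ together with the fact that $F_{i,t}$ always has nonzero degree, I would construct the $\mathbb Z_2$-equivariant $*$-isomorphism $I_i : S{\frak C}_{F_{i,0}}(W_i') \cong S{\frak C}_{F_{i,1}}(W_i')$ exactly as in Lemma \ref{homot}: an element of $S{\frak C}_{F_{i,0}}(W_i')$ is uniquely of the form $(F_{i,0})^*(v_0)$, and one sends it to $(F_{i,1})^*(v_0)$; equivariance is automatic since each $F_{i,t}$ is $\mathbb Z_2$-equivariant. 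Combining these with Proposition \ref{iso} applied to $F_{i,0}=I|_{W_i'}$ gives, for each $i$, a $\mathbb Z_2$-equivariant $*$-isomorphism $S{\frak C}_{F_i}(W_i')\cong S{\frak C}(W_i')$.

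Then I would pass to the direct limit. By Definition \ref{induced Clifford} (and part (1) of the Lemma following it, since here $W_i' = V_i'\oplus T(V_i')$ exhausts $H'\oplus H'$ and $F_i=\mathrm{pr}_i\circ F$) we have $S{\frak C}_F(H\oplus H)=\varinjlim_i S{\frak C}_{F_i}(W_i')$, while $S{\frak C}(H\oplus H)=\varinjlim_i S{\frak C}(W_i')$. I would check that the isomorphisms $S{\frak C}_{F_i}(W_i')\cong S{\frak C}(W_i')$ intertwine the respective connecting maps up to the asymptotic commutativity already built into the definition of the norms: concretely, the connecting map on the $S{\frak C}_{F_i}$ side is $(F_{i+1})^*\circ\beta\circ((F_i)^*)^{-1}$ composed with $I_{i+1}^{-1}\circ(\text{projection iso})$, and Sublemma \ref{bott-commu} together with the estimate $\sup_{D_{r_i}'}\|F_{i+1}-F_{i+1}^0\|<\delta_i\to0$ shows this agrees asymptotically with the standard Bott map on $S{\frak C}(W_i')$ — this is the same estimate that makes Proposition \ref{cpt-degree} work. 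Taking limits and then the full crossed product with $\mathbb Z_2$ (which preserves inductive limits and $\mathbb Z_2$-equivariant $*$-isomorphisms), and finally applying $K_1$, yields $K_1^{\mathbb Z_2}(S{\frak C}_F(H\oplus H))\cong K_1^{\mathbb Z_2}(S{\frak C}(H\oplus H))$.

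\textbf{Main obstacle.} The routine-looking but genuinely delicate step is the last one: verifying that the family of $\mathbb Z_2$-equivariant isomorphisms $S{\frak C}_{F_i}(W_i')\cong S{\frak C}(W_i')$ is compatible with the two inductive systems closely enough to descend to the limit. The isomorphisms are \emph{not} strictly natural in $i$ — they are built from the homotopies $F_{i,t}$ and the near-unitary polar-decomposition corrections of Proposition \ref{iso}, and both of these only commute with the Bott connecting maps up to errors of size $\epsilon_i\to0$. So the argument must be phrased in terms of the asymptotic-commutativity norm $(*)$ used to define the direct limit (or, as in the infinite cyclic Proposition, by pushing representatives $p,q,v$ of $K$-classes through finite stages and controlling the errors), rather than as a strict morphism of directed systems. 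Everything else — properness and nonzero degree of $F_i$, compactness of the cubing map via Lemma \ref{comp}, equivariance — is either already in the excerpt or a direct transcription of the $\mathbb Z_l$ and $\mathbb Z$ cases.
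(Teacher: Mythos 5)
Your proposal follows essentially the same strategy as the paper: finitely approximate $F$ by $F_i=\mathrm{pr}_i\circ F$, homotope each $F_i$ (through proper $\mathbb Z_2$-equivariant maps of nonzero degree) to the involution $I$, identify the finite stages, and pass to the direct limit. Where you differ is in the detail of how the homotopy is cashed in. The paper's proof asserts at Step~2 that ``$K$-theory is stable under these continuous deformations'' to get $K_1^{\mathbb Z_2}(S_{r_i}{\frak C}_{F_i^0}(D'_{r_i}))\cong K_1^{\mathbb Z_2}(S_{r_i}{\frak C}_{F_i^1}(D'_{r_i}))$, whereas you instead construct an explicit $*$-isomorphism $I_i=(F_{i,1})^*\circ\bigl((F_{i,0})^*\bigr)^{-1}$ \`a la Lemma~\ref{homot}, using nonzero degree to show each $(F_{i,t})^*$ is an isometric $*$-isomorphism onto its image. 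This is actually cleaner than a blanket appeal to homotopy invariance, since the codomain $C^*$-algebras themselves vary with $t$; the paper implicitly needs the same degree argument from its own Step~3, so you are only tightening the logic, not changing it. Likewise, you route the identification $S{\frak C}_I\cong S{\frak C}$ through Proposition~\ref{iso} rather than the paper's explicit Sublemma~\ref{invo} (both work; the paper's is more elementary here since $I$ is unitary and no polar-decomposition correction is needed). Finally, you correctly single out the compatibility of the finite-stage isomorphisms with the two inductive systems as the delicate point; the paper's Step~4 passes over this silently, so your explicit flagging (and proposed fix via Sublemma~\ref{bott-commu} and the $\delta_i\to 0$ estimate) is a genuine, if modest, improvement in rigor rather than a departure in method.
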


\begin{proof}
{\bf Step 1:}
By the same argument as the toy case, $F$ is metrically proper, 
and it is $\mathbb{Z}_2$-equivariantly properly homotopic to the involution 
$I: H \oplus H \cong H \oplus H$  by $F^t$.

\begin{sublem}\label{invo}
There is a $\mathbb {Z}_2$-equivariant $*$-isomorphism
$$S{\frak C}_I(H \oplus H) \cong S{\frak C}(H \oplus H).$$
\end{sublem}

\begin{proof}
By construction, 
$$S{\frak C}_I(H \oplus H) = \{  \tilde{u} ; u \in S{\frak C}(H \oplus H) \}$$
where $\tilde{u}(a,b) = I^*(u(a,b))$ with $a,b \in H$.
\end{proof}

{\bf Step 2:}
It follows from Lemma \ref{comp} that 
$F_i^{-1}(D_{s_i} \cap W_i) \subset D_{r_i}' \cap W_i'$ holds.
As in the toy case, one may assume the same property
$$(F_i^t)^{-1}(D_{s_i} \cap W_i) \subset D_{r_i} \cap W_i' \equiv D_{r_i}'$$
where $F_i^t = \text{pr}_i \circ F^t$.

$K$-theory is stable under these continuous deformations
so that  the isomorphism holds
$$K_1^{{\mathbb Z}_2}(S_{r_i}{\frak C}_{F_i^0}(D'_{r_i})) \cong 
K_1^{{\mathbb Z}_2}(S_{r_i}{\frak C}_{F_i^1}(D'_{r_i})) .$$

{\bf Step 3:}
Recall the induced Clifford $C^*$-algebra
$S{\frak C}_F(H)$ whose  element $\{\alpha_i\}_i$
satisfies the equality
$\alpha_{i} = F_{i}^* \beta (u_{i_0})$
for some $u_i = \beta(u_{i_0} )  \in S_{s_i}{\frak C}(D_{s_i} \cap W_i)$
and all $i \geq i_0$. Here, 
$S_{r_i}{\frak C}_{F_i}(D'_{r_i}) $ is defined 
as the image of $F_i^* : S_{s_i}{\frak C}(D_{s_i}\cap W_i )  \to
S_{r_i}{\frak C}_{l_i}(D'_{r_i})  = S_{r_i}{\frak C}(D'_{r_i})$
(and $l_i$ is the identity in this particular case).

Note that $F_i|D_{r_i}'$ has non-zero degree.
We claim that
there is a $*$-homomorphism
$$\Phi_i: S_{r_i}{\frak C}_{F_i}(D'_{r_i}) 
\to S_{r_{i+1}}{\frak C}_{F_{i+1}}(D'_{r_{i+1}})$$
which sends $\alpha_i$ to $\alpha_{i+1}$.
In fact $\alpha_i$ uniquely determines $u_i$.
Suppose the contrary, and choose two elements  $u_i,u_i' 
\in S_{s_i}{\frak C}(D_{s_i}\cap W_i)$ with 
$F_i^*(u_i)=F_i^*(u_i')$.
If $u_i \ne u_i'$ could hold, then there exists $m \in D_{s_i}\cap W_i$
with $u_i(m) \ne u_i'(m)$. However, since $F_i$ has non-zero degree
and is  hence surjective, there exists $x \in D_{r_i}$ with $F_i(x)=m$.
Then, we have the equality
$u_i(m)= F_i^*(u_i)(x) =  F_i^*(u'_i)(x) =u_i'(m)$, which 
 contradicts to the assumption.

Now, since 
$F_i^*: S_{s_i}{\frak C}(D_{s_i}\cap W_i) \to 
S_{r_i}{\frak C}_{F_i}(D'_{r_i})  \subset 
S_{r_i}{\frak C}_{l_i}(D'_{r_i}) $ 
is an isometric  $*$-embedding, 
it follows that the inverse
$$(F_i^*)^{-1}: S_{r_i}{\frak C}_{F_i}(D'_{r_i}) 
\to S_{s_i}{\frak C}(D_{s_i}\cap W_i) $$ is $*$-isomorphic.
Then,
$\Phi_i $
is given by the compositions
$ F_{i+1}^* \circ \beta \circ (F_i^*)^{-1}$.


{\bf Step 4:}
Then, 
\begin{align*}
& K_1^{{\mathbb Z}_2}((S{\frak C}_F(H \oplus H) )  \cong
 \lim_i \ K_1^{{\mathbb Z}_2}(S_{r_i}{\frak C}_{F_i^1}(D_{r_i})) \\
& \cong 
 \lim_i \ K_1^{{\mathbb Z}_2}(S_{r_i}{\frak C}_{F_i^0}(D_{r_i}) )
 \cong K_1^{\mathbb{Z}_2}(S{\frak C}_I(H \oplus H)).
\end{align*}
By  Sublemma \ref{invo}, 
we have the desired isomorphism.
\end{proof}

\subsubsection{Finite cyclic case}
Consider the space
$$S^1_l = \mathbb{R} / l \mathbb{Z}= [0,l]/ \{0 \sim l\}$$
and
$W^{k,2}(S^1_l)$ which  is generated by 
$\sin( 2\pi \frac{k}{l}   s)$ and $\cos( 2\pi \frac{k}{l}   s)$
for $k \in \mathbb{Z}$.

 Consider the Sobolev spaces
  $$W^{k,2}(S^1_l)_0 ,  W^{k,2}(S^1_l)_1,
  \cdots ,W^{k,2}(S^1_l)_{l-1}
   \ \subset \ W^{k,2}(S^1_l)$$
   which  are, respectively,  generated by $W^{k,2}(i,i+1)_0$. Then, 
  $W^{k,2}(S^1_l)_i $ is naturally isometric to $W^{k,2}(S^1_l)_{i+1} $, by the shift 
  $T: u_i \to u_{i+1}$ by $u_{i+1}(s) := u_i(s-1)$, of order $l$.
      Thus,  we can identify these Hilbert spaces 
      by the same symbol $H$
  and, so  the inclusion 
  $H \oplus H  \oplus \dots \oplus H \subset W^{k,2}(S^1_l)$ holds.

   We again consider the non linear map
 \begin{align*}
 & F: H^l = H \oplus H \oplus \dots \oplus H 
  \to H \oplus H \oplus \dots \oplus H , \\
 & F(a_1, \dots,a_l) =( a_1+T(a_l)^3, a_2 +T(a_1)^3, \dots,
 a_l +T(a_{l-1})^3).
 \end{align*}
Then,  the map can be written as
$$\begin{pmatrix}
a_1 \\
a_2 \\
\dots \\
a_l
\end{pmatrix}
\to
\begin{pmatrix}
a_1 +a_l^3 \\
a_2+a_1^3 \\
\dots \\
a_l + a_{l-1}^3
\end{pmatrix}$$
By the same argument as the toy case, this is metrically proper, 
and its nonlinear part is compact on each bounded set.

By use of $F^t$ as above, 
$F$ is $\mathbb{Z}_l$-equivariantly properly homotopic to the 
cyclic shift  $T$.
By a similar argument, we have the following corollary.

\begin{cor}
There is a $\mathbb {Z}_l$-equivariant $*$-isomorphism
$$K_1^{\mathbb{Z}_l}(S{\frak C}_F(H^l ) ) \cong K_1^{\mathbb{Z}_l}(S{\frak C}(H^l))
\cong K_1^{\mathbb{Z}_l}(S)
$$
where ${\mathbb Z}_l$ acts on $H^l$ by the cyclic permutation
of the components. 
\end{cor}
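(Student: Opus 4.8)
The plan is to follow the proof of Sublemma \ref{invo} and the preceding Proposition in subsection \ref{Sob-fin} verbatim, replacing the involution $I$ by the cyclic shift $T_l$, which here is nothing but the generator of the $\mathbb{Z}_l$-action. First I would record that $F=\mathrm{id}+c$ with $c(a_1,\dots,a_l)=(T(a_l)^3,T(a_1)^3,\dots,T(a_{l-1})^3)$, so $c$ is compact on each bounded set by $(1)$ of Lemma \ref{Sob} (as in Lemma \ref{comp}); hence $F$ is strongly finitely approximable (Corollary \ref{met-prop.cpt}, $l=\mathrm{id}$ being trivially asymptotically unitary), with $W_i'=W_i$ the $\mathbb{Z}_l$-invariant span of the first $n_i$ Fourier modes in each of the $l$ blocks and $F_i=\mathrm{pr}_i\circ F:W_i'\to W_i$. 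As in subsection \ref{fin-case}, the family $F^t(a_1,\dots,a_l)=(ta_1+T(a_l)^3,\dots,ta_l+T(a_{l-1})^3)$, $t\in(0,1]$, followed by the standard homotopy contracting the cubes to the identity on each factor, supplies a $\mathbb{Z}_l$-equivariant and uniformly metrically proper homotopy from $F$ to $T_l$; the truncations $F_i^t=\mathrm{pr}_i\circ F^t$ then satisfy $(F_i^t)^{-1}(D_{s_i}\cap W_i)\subset D'_{r_i}$ for suitable $r_i,s_i\to\infty$.

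The key structural input is the analogue of Sublemma \ref{invo}: since $\mathbb{Z}_l$ is abelian, the generator $T_l$ commutes with the $\mathbb{Z}_l$-action and is unitary, so Lemma \ref{F=l} (with the equivariant clause of Proposition \ref{iso}) yields a $\mathbb{Z}_l$-equivariant $*$-isomorphism $S{\frak C}_{T_l}(H^l)\cong S{\frak C}(H^l)$. With this I would run the four steps of the $\mathbb{Z}_2$ argument: $K$-theory is invariant under the deformation $F_i^t$, giving $K_1^{\mathbb{Z}_l}(S_{r_i}{\frak C}_{F_i^0}(D'_{r_i}))\cong K_1^{\mathbb{Z}_l}(S_{r_i}{\frak C}_{F_i^1}(D'_{r_i}))$; because $F_i|_{D'_{r_i}}$ is, by the proper homotopy to the cyclic permutation (whose degree is $\pm1$), surjective, the map $F_i^*$ is an isometric $*$-isomorphism onto its image $S_{r_i}{\frak C}_{F_i}(D'_{r_i})$, which produces the connecting maps $\Phi_i=F_{i+1}^*\circ\beta\circ(F_i^*)^{-1}$ and identifies $S{\frak C}_F(H^l)=\varinjlim_i S_{r_i}{\frak C}_{F_i}(D'_{r_i})$ as in Definition \ref{induced Clifford}. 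Passing to the limit then gives
\begin{align*}
K_1^{\mathbb{Z}_l}(S{\frak C}_F(H^l)) &\cong \varinjlim_i K_1^{\mathbb{Z}_l}(S_{r_i}{\frak C}_{F_i^1}(D'_{r_i})) \cong \varinjlim_i K_1^{\mathbb{Z}_l}(S_{r_i}{\frak C}_{F_i^0}(D'_{r_i})) \\
&\cong K_1^{\mathbb{Z}_l}(S{\frak C}_{T_l}(H^l)) \cong K_1^{\mathbb{Z}_l}(S{\frak C}(H^l)) \cong K_1^{\mathbb{Z}_l}(S),
\end{align*}
the last isomorphism being the equivariant HKT-Bott periodicity, Proposition \ref{HKT}.

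The step that carries the real weight is the one invoking degree $\pm1$, namely identifying $S{\frak C}_F(H^l)$ with the honest direct limit $\varinjlim_i S_{r_i}{\frak C}_{F_i}(D'_{r_i})$ along isomorphisms-onto-image. The general machinery of Sections \ref{fin-dim-appr}--\ref{cov-degree} only provides that $F^*$ is a $*$-homomorphism $S{\frak C}(H^l)\to S{\frak C}_F(H^l)$; it is the non-vanishing (indeed $\pm1$) of the degree of each finite truncation $F_i|_{D'_{r_i}}$ that makes $F_i^*$ injective with dense range, hence isometric onto its image, so that $u_i$ is uniquely recovered from $\alpha_i$ and $\Phi_i$ is well defined and invertible. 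Were the degree zero — as happens if the cubes are replaced by squares — this chain of isomorphisms would collapse. So the heart of the argument is the explicit equivariant proper homotopy of subsection \ref{fin-case} together with the elementary fact that a cyclic permutation of $l$ blocks has degree $\pm1$; a secondary but routine point is to verify that $W_i'$ is genuinely $\mathbb{Z}_l$-invariant and that $F^t$ and its truncations remain $\mathbb{Z}_l$-equivariant and uniformly proper, so that the deformation invariance in Step~2 and the limit in Step~4 are legitimately equivariant.
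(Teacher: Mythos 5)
Your proposal is correct and is precisely the argument the paper compresses into the phrase ``By a similar argument'': you transplant the four steps of the $\mathbb{Z}_2$ proposition, replace the involution $I$ by the cyclic shift $T_l$, invoke the analogue of Sublemma~\ref{invo} (equivalently Lemma~\ref{F=l} with the equivariant clause of Proposition~\ref{iso}, using that $\mathbb{Z}_l$ is abelian so $T_l$ commutes with the action), and finish with HKT Bott periodicity. You also correctly isolate the two load-bearing points — the $\mathbb{Z}_l$-equivariant proper homotopy from subsection~\ref{fin-case} and the degree $\pm1$ of the cyclic permutation, which is what makes each $F_i^*$ an isometric $*$-isomorphism onto its image so that the direct-limit identification of $S\mathfrak{C}_F(H^l)$ and the maps $\Phi_i$ are legitimate.
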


The above computation is  applicable
to more general situations of $F$,  and is not restricted to 
such a specified form of the non linear term.

\subsubsection{Infinite cyclic case}
It is not so immediate to extend the above finite cyclic case  
  to the infinite case, following the same approach.
For example the map
$l^2({\mathbb Z}) \to l^2({\mathbb Z})$ 
by $\{a_i\}_i \to \{a_{i+1}^3\}_i$ is not proper.

Therefore,  we use a very specific approach  to compute the
${\mathbb Z}$ case.
Let $H$ be the Hilbert space identified 
as $H = W^{k,2}(0,1)_0 \subset W^{k,2}({\mathbb R})$,
 and 
let ${\bf H}$ be the closure of the sum 
$\oplus_{i \in {\mathbb Z}} \ H_i$, 
where $H_i$ are the copies of the same $H$.
Then, the  
 $T$ orbit of $H$, 
$\{T^n(H)\}_{n \in {\mathbb Z}}$ generates 
${\bf H} \subset W^{k,2}({\mathbb R})$,
where $T: W^{k,2}(i,i+1)_0 \cong W^{k,2}(i+1,i+2)_0$ 
is the shift as before.

Consider the map $F: {\bf H} \to {\bf H}$ by
$$F: 
\begin{pmatrix}
\dots \\
a_{-1} \\
a_0 \\
a_1 \\
\dots \\
\end{pmatrix}
\to
\begin{pmatrix}
\dots \\
a_{-1} +a_{-2}^3 \\
a_0+a_{-1}^3 \\
a_1+ a_0^3 \\
\dots \\
\end{pmatrix}.$$
Let $H'_l $  be spanned by the vectors
$(a_{-l}, \dots, a_l)$.
As in the toy model case, consider  the approximation
$F^l : H'_l \to H_l$ by shifting the last component
$$F^l: 
\begin{pmatrix}
a_{-l} \\
a_{-l+1} \\
\dots \\
a_l 
\end{pmatrix}
\to
\begin{pmatrix}
a_{-l}  +a_l^3\\
a_{-l+1}+a_{-l}^3 \\
\dots \\
a_l+ a_{l-1}^3
\end{pmatrix}.$$
There is a finite-dimensional
linear subspace $W_i' \subset H_i'$ with $r_i,s_i >0$
such that   $(F^i, W_i', D'_{r_i})$ gives a ${\mathbb Z}$-finitely approximable data with $l_i  =$ id.

An element $u \in K_0(S{\frak C}_F({\bf H}) \rtimes {\mathbb Z})$
has a representative as 
 $u =[p]- [\pi(p)] $, where
 $$p \in Mat( \overline{S{\frak C}_F({\bf H}) \rtimes {\mathbb Z})}.$$
 Here, $p$  can be approximated as
 $$p' \in Mat(C(\{-l, \dots, l\}, \overline{S{\frak C}_{F^l}(W_l')}) ).$$
 
 The rest of the process is parallel to the toy model case, 
and so one can proceed in the same way,
and then obtain   an isomorphism
 \begin{align*}
 K_0(S{\frak C}_F({\bf H}) \rtimes {\mathbb Z})
&  \cong K_0(S{\frak C}({\bf H}) \rtimes {\mathbb Z}) \\
 & \cong K_1(C^*({\mathbb Z})) = K^1(S^1) \cong {\mathbb Z}.
 \end{align*}

\vspace{1.5cm}

Tsuyoshi Kato

Department of Mathematics,

Faculty of Science,

Kyoto University,

Kyoto 606-8502,
Japan.

e-mail: tkato@math.kyoto-u.ac.jp

\end{document}